\begin{document}

\newtheorem{theorem}{Theorem}    
\newtheorem{proposition}[theorem]{Proposition}
\newtheorem{conjecture}[theorem]{Conjecture}
\def\theconjecture{\unskip}
\newtheorem{corollary}[theorem]{Corollary}
\newtheorem{lemma}[theorem]{Lemma}
\newtheorem{sublemma}[theorem]{Sublemma}
\newtheorem{observation}[theorem]{Observation}
\newtheorem{remark}[theorem]{Remark}
\newtheorem{definition}[theorem]{Definition}
\theoremstyle{definition}
\newtheorem{notation}[theorem]{Notation}
\newtheorem{question}[theorem]{Question}
\newtheorem{questions}[theorem]{Questions}
\newtheorem{example}[theorem]{Example}
\newtheorem{problem}[theorem]{Problem}
\newtheorem{exercise}[theorem]{Exercise}

\numberwithin{theorem}{section} \numberwithin{theorem}{section}
\numberwithin{equation}{section}

\def\earrow{{\mathbf e}}
\def\rarrow{{\mathbf r}}
\def\uarrow{{\mathbf u}}
\def\varrow{{\mathbf V}}
\def\tpar{T_{\rm par}}
\def\apar{A_{\rm par}}

\def\reals{{\mathbb R}}
\def\torus{{\mathbb T}}
\def\heis{{\mathbb H}}
\def\integers{{\mathbb Z}}
\def\naturals{{\mathbb N}}
\def\complex{{\mathbb C}\/}
\def\distance{\operatorname{distance}\,}
\def\support{\operatorname{support}\,}
\def\dist{\operatorname{dist}\,}
\def\Span{\operatorname{span}\,}
\def\degree{\operatorname{degree}\,}
\def\kernel{\operatorname{kernel}\,}
\def\dim{\operatorname{dim}\,}
\def\codim{\operatorname{codim}}
\def\trace{\operatorname{trace\,}}
\def\Span{\operatorname{span}\,}
\def\dimension{\operatorname{dimension}\,}
\def\codimension{\operatorname{codimension}\,}
\def\nullspace{\scriptk}
\def\kernel{\operatorname{Ker}}
\def\ZZ{ {\mathbb Z} }
\def\p{\partial}
\def\rp{{ ^{-1} }}
\def\Re{\operatorname{Re\,} }
\def\Im{\operatorname{Im\,} }
\def\ov{\overline}
\def\eps{\varepsilon}
\def\lt{L^2}
\def\diver{\operatorname{div}}
\def\curl{\operatorname{curl}}
\def\etta{\eta}
\newcommand{\norm}[1]{ \|  #1 \|}
\def\expect{\mathbb E}
\def\bull{$\bullet$\ }
\def\C{\mathbb{C}}
\def\R{\mathbb{R}}
\def\Rn{{\mathbb{R}^n}}
\def\Sn{{{S}^{n-1}}}
\def\M{\mathbb{M}}
\def\N{\mathbb{N}}
\def\Q{{\mathbb{Q}}}
\def\Z{\mathbb{Z}}
\def\F{\mathcal{F}}
\def\L{\mathcal{L}}
\def\S{\mathcal{S}}
\def\supp{\operatorname{supp}}
\def\dist{\operatorname{dist}}
\def\essi{\operatornamewithlimits{ess\,inf}}
\def\esss{\operatornamewithlimits{ess\,sup}}
\def\xone{x_1}
\def\xtwo{x_2}
\def\xq{x_2+x_1^2}
\newcommand{\abr}[1]{ \langle  #1 \rangle}

\newcommand{\Norm}[1]{ \left\|  #1 \right\| }
\newcommand{\set}[1]{ \left\{ #1 \right\} }
\def\one{\mathbf 1}
\def\whole{\mathbf V}
\newcommand{\modulo}[2]{[#1]_{#2}}

\def\scriptf{{\mathcal F}}
\def\scriptg{{\mathcal G}}
\def\scriptm{{\mathcal M}}
\def\scriptb{{\mathcal B}}
\def\scriptc{{\mathcal C}}
\def\scriptt{{\mathcal T}}
\def\scripti{{\mathcal I}}
\def\scripte{{\mathcal E}}
\def\scriptv{{\mathcal V}}
\def\scriptw{{\mathcal W}}
\def\scriptu{{\mathcal U}}
\def\scriptS{{\mathcal S}}
\def\scripta{{\mathcal A}}
\def\scriptr{{\mathcal R}}
\def\scripto{{\mathcal O}}
\def\scripth{{\mathcal H}}
\def\scriptd{{\mathcal D}}
\def\scriptl{{\mathcal L}}
\def\scriptn{{\mathcal N}}
\def\scriptp{{\mathcal P}}
\def\scriptk{{\mathcal K}}
\def\frakv{{\mathfrak V}}

%
\newtheorem*{remark0}{\indent\sc Remark}
%
\renewcommand{\proofname}{\indent\sc Proof.} 

\title[Some sharp bounds for Hardy type operators]
{Some sharp bounds for Hardy type operators on mixed radial-angular type function spaces}
\author{Ronghui Liu$^{*,1,2}$}
\author{Yanqi Yang$^{1,2}$}
\author{Shuangping Tao$^{1}$}

\renewcommand{\thefootnote}{}
\footnotetext[1]{*Corresponding author: Ronghui Liu}
\footnotetext[1]{\textcolor{black}{2020} \textit{Mathematics Subject Classification}. Primary 42B35; Secondary 26D10, 46E30, 26D15.}


%
\keywords{Hardy type operator, fractional Hardy operator, mixed radial-angular integrability, weak-type estimate}
\thanks{ Ronghui Liu, email:rhliu@nwnu.edu.cn.   Yanqi Yang, email: yangyq@nwnu.edu.cn  \\ Shuangping Tao, email: taosp@nwnu.edu.cn.  }
\thanks{ $^{1}$ College of Mathematics and Statistics, Northwest Normal University, Lanzhou, 730070, People's Republic
of China}
\thanks{$^{2}$ Hubei Key Laboratory of Applied Mathematics, Hubei University, Wuhan, 430062, People's Republic
of China}
\thanks{The first author was supported by the Doctoral Scientific Research Foundation of Northwest Normal University (No. 202203101202), the Young Teachers Scientific Research Ability Promotion Project of Northwest Normal University (No. NWNU-LKQN2023-15)
and the Open Foundation of Hubei Key Laboratory of Applied Mathematics (Hubei University) (No. HBAM202303). The second author was supported by the Open Foundation of Hubei Key Laboratory of Applied Mathematics (Hubei University) (No. HBAM202205). The third author was supported by the NNSF of China (No. 12361018.)}


\maketitle
\begin{abstract}
In this paper, we are devoted to studying some sharp bounds for Hardy type operators on mixed radial-angular type function spaces. In addition, we  will  establish the sharp weak-type estimates for the fractional Hardy operator and its conjugate operator, respectively.
\end{abstract}

\section{Some classical conclusions of Hardy type operators }
The Hardy operator, as the most fundamental averaging operator, is defined by
 $$\mathrm{H}f(x)= \frac{1}{x}\int_{0}^{x}f(t)dt,$$
 where the function $f$ is  a nonnegative  integrable function on $\mathbb{R}^{+}$ and $x>0$. A celebrated integral inequality, due to Hardy \cite{GH}, states that
  $$\|\mathrm{H}f\|_{L^{p}(\mathbb{R}^{+})}\leq\frac{p}{p-1}\|f\|_{L^{p}(\mathbb{R}^{+})}$$
  holds for $1<p<\infty$, and the constant $\frac{p}{p-1}$ is the best possible.

For the multidimensional case $n\geq2 $, generally speaking, there exist two different definitions. One is the rectangle averaging operator, that is,
$$\mathfrak{H}f(x)= \frac{1}{x_1\cdots x_m}\int_{0}^{x_1}\cdots\int_{0}^{x_m}f(t_1,\ldots,t_m)dt_m\cdots dt_1,$$
where the function $f$ is  a nonnegative  integrable function on $(0,\infty)^n$, and
$$\|\mathfrak{H}\|_{L^p((0,\infty)^n)\rightarrow L^p((0,\infty)^n)}=\big(\frac{p}{p-1}\big)^n,\quad p>1,$$
it is easy to see that its norm  depends on the dimensions.

Another version  is the $n$-dimensional spherical  averaging operator, which  was  introduced by Faris in \cite{CG} as follows:
$$\mathcal{H}f(x)= \frac{1}{\nu_n|x|^n}\int_{|y|\leq|x|}f(y)dy, \quad x\in\mathbb{R}^n\setminus \{0\},$$
where $\nu_n=\frac{\pi^{n/2}}{\Gamma(1+n/2)}$ is the volume of the unit ball in $\mathbb{R}^n$. The norm of $\mathcal{H}$ on $L^{p}(\mathbb{R}^{n})$ was evaluated  as
$$\|\mathcal{H}\|_{L^p(\mathbb{R}^n)\rightarrow L^p(\mathbb{R}^n)}=\frac{p}{p-1}, \quad p>1.$$

However, it is not hard to see that its norm is different from the rectangle averaging operator and found to be equal to that of the $1$-dimensional averaging operator, that is to say, does not depend on the dimension of the space.

Suppose $\psi:[0,1]\rightarrow[0,\infty)$ is a function. For a measurable complex valued function $f$ on $\mathbb R^{n}$, Xiao in \cite{Xiao} defined the weighted Hardy-Littlewood average $U_{\psi}(f)$ and the weighted Ces\`{a}ro average $V_{\psi}(f)$ as
$$U_{\psi}(f)(x)=\int_{0}^{1}f(tx)\psi(t)dt, \quad V_{\psi}(f)(x)=\int_{0}^{1}f(x/t)t^{-n}\psi(t)dt.$$
It is easy to see that if $\psi=1$ and $n=1$, then
$$U_{\psi}=\mathrm{H},\quad V_{\psi}=\int_{x}^{\infty}\frac{f(y)}{y}dy, \quad x>0,$$
and $$U_{\psi}(f)(x)+U_{\psi}(f)(x)= \frac{1}{x}\int_{0}^{x}f(y)dy+\int_{x}^{\infty}\frac{f(y)}{y}dy, \quad x>0$$
becomes the Calder\'{o}n maximal operator.

In \cite{Xiao}, the author obtained the following results for the operators $U_{\psi}$ and  $V_{\psi}$.

{\bf Theorem A}  Let $\psi:[0,1]\rightarrow[0,\infty)$ be a function and let $p\in[1,\infty)$. Then

(i) $U_{\psi}(f): L^p(\mathbb R^{n})\rightarrow L^p(\mathbb R^{n})$ exists as a bounded operator if and only if
$$\int_{0}^{1}t^{-n/p}\psi(t)dt<\infty.$$

Moreover, the operator norm of $U_{\psi}$ on $L^p(\mathbb R^{n})$ is given by
$$\|U_{\psi}\|_{L^p(\mathbb R^{n})\rightarrow L^p(\mathbb R^{n})}=\int_{0}^{1}t^{-n/p}\psi(t)dt.$$

(ii) $V_{\psi}(f): L^p(\mathbb R^{n})\rightarrow L^p(\mathbb R^{n})$ exists as a bounded operator if and only if
$$\int_{0}^{1}t^{-n(1-1/p)}\psi(t)dt<\infty.$$

Moreover, the operator norm of $V_{\psi}$ on $L^p(\mathbb R^{n})$ is given by
$$\|V_{\psi}\|_{L^p(\mathbb R^{n})\rightarrow L^p(\mathbb R^{n})}=\int_{0}^{1}t^{-n(1-1/p)}\psi(t)dt.$$

 In 2013, Lu et al. \cite{LYZ} defined  the following Hardy type operator:
$$\mathcal{H}_{m}(f)(x):=\Big(\prod\limits_{i=1}^{m}\frac{1}{|B(0,|x_i|)|}\Big)\int_{|y_1|<|x_1|}\cdots\int_{|y_m|<|x_m|}f(y_1,\ldots,y_m)dy_m\cdots{dy_1},$$
where  $f$ is a  nonnegative measurable function on $\mathbb R^{n_1} \times\mathbb R^{n_2}\times\cdots\times\mathbb R^{n_m}$, $ m\in\mathbb N$, $n_i\in\mathbb N$, $x=(x_1,x_2,\ldots,x_m)\in\mathbb R^{n_1} \times\mathbb R^{n_2}\times\cdots\times\mathbb R^{n_m}$, $x_i\in\mathbb R^{n_i}$ and $\prod\limits_{i=1}^{m}|x_i|\neq0$. Furthermore, the corresponding operator norm on the Lebesgue product spaces with power weights was obtained by some classical techniques.

{\bf Theorem B} (\cite{LYZ})
Let $1<q<\infty, m\in\mathbb N$, $n_i\in\mathbb N$, $x_i\in\mathbb R^{n_i}$, $i=1,\ldots,m$. If $f\in L^q(\mathbb R^{n_1} \times\mathbb R^{n_2}\times\cdots\times\mathbb R^{n_m},|x|^{\vec\alpha})$, where $|x|^{\vec\alpha}:=|x|^{\alpha_1}\times|x|^{\alpha_2}\cdots\times|x|^{\alpha_m}$ and $\alpha_i<(q-1)n_i$, then the Hardy type operator $\mathcal{H}_{m}$ is bounded on $L^q(\mathbb R^{n_1} \times\mathbb R^{n_2}\times\cdots\times\mathbb R^{n_m},|x|^{\vec\alpha})$, moreover, the norm of $\mathcal{H}_{m}$ can be obtained as follows:
$$\|\mathcal{H}_{m}\|_{{L^q(\mathbb R^{n_1} \times\mathbb R^{n_2}\times\cdots\times\mathbb R^{n_m},|x|^{\vec\alpha})}\rightarrow L^q(\mathbb R^{n_1} \times\mathbb R^{n_2}\times\cdots\times\mathbb R^{n_m},|x|^{\vec\alpha})}=\prod\limits_{i=1}^{m}\frac{q}{q-1-\alpha_i/n_i}.$$

Subsequently, Fu et al.  in \cite{FLLZ}  established sharp bound for $m$-linear Hardy operator, let $m\in\mathbb{N}, f_1,f_2,\ldots,f_m$ be nonnegative locally functions on $\mathbb R^{n}$. The $m$-linear Hardy operator is defined by
$$\mathcal{H}^{m}(f)(x):=\frac{1}{\nu_{mn}}\frac{1}{|x|^{mn}}\int_{|(y_1,\cdots,y_m)|<|x|}f(y_1)\cdots f(y_m)dy_1\cdots{dy_m},\quad x\in\mathbb{R}^n\setminus \{0\}.$$

{\bf Theorem C} (\cite{FLLZ})
Let $m\in\mathbb N$,  $f_i\in L^{p_i}(|x|^{\frac{\alpha_ip_i}{p}}dx)$, $1<p_i<\infty$, $1\leq p<\infty$, $i=1,\ldots,m$, $\frac{1}{p}=\frac{1}{p_1}+\cdots+\frac{1}{p_m}$, $\alpha_i<(1-1/p_i)pn$ and $\alpha=\alpha_1+\cdots+\alpha_m$. Then $m$-linear Hardy operator $\mathcal{H}^{m}$ maps the product of weighted Lebesgue spaces  $L^{p_1}(|x|^{\frac{\alpha_1p_1}{p}}dx)\times\cdots\times L^{p_m}(|x|^{\frac{\alpha_mp_m}{p}}dx)$ to  $L^{p}(|x|^{\alpha}dx)$ with norm equal to the constant
$$\frac{\omega_n^m}{\omega_{mn}}\frac{pmn}{{pmn-n-\alpha}}\frac{1}{2^m-1}\frac{\prod_{i=1}^{m}\Gamma(\frac{n}{2}(1-\frac{1}{p_i}-\frac{\alpha_i}{p_i}))}{\Gamma(\frac{n}{2}(1-\frac{1}{p}-\frac{\alpha}{p}))}.$$

\begin{definition}\label{de1.1}
Let $1<p<\infty$. A function $f\in L^p_{loc}(\mathbb R^{n})$ is said to belong to the spaces ${CMO^{p}(\mathbb R^n)}$, if
$$\|f\|_{{CMO^{p}(\mathbb R^n)}}=\sup\limits_{r>0}\Big(\frac{1}{|B(0,r)|}\int_{B(0,r)}|f(x)-f_{B}|^pdx\Big)^{1/p}<\infty,$$
where $$f_{B}=\frac{1}{|B(0,r)|}\int_{B(0,r)}f(x)dx.$$
\end{definition}

In 2000, Alvarez, Guzm\'{a}n-Partida and Lakey \cite{AGL} introduced $\lambda$-central bounded mean oscillation spaces  and  $\lambda$-central Morrey spaces as follows.

\begin{definition}\label{de1.2} Let $1<q<\infty$ and $-1/q<\lambda<1/n$. A function $f\in L^q_{loc}(\mathbb R^{n})$ is said to belong to the $\lambda$-central bounded mean oscillation spaces $CMO^{q,\lambda}(\mathbb R^{n})$ if
$$\|f\|_{{CMO^{q,\lambda}(\mathbb R^n)}}=\sup\limits_{r>0}\Big(\frac{1}{|B(0,r)|^{1+\lambda q}}\int_{B(0,r)}|f(x)-f_{B}|^qdx\Big)^{1/q}<\infty.$$
\end{definition}

\begin{definition}\label{de1.3} Let $1<q<\infty$ and $\lambda\in\mathbb{R}$. A function $f\in L^q_{loc}(\mathbb R^{n})$ is said to belong to the $\lambda$-central Morrey spaces $\mathcal{B}^{q,\lambda}(\mathbb R^{n})$ if
$$\|f\|_{\mathcal{B}^{q,\lambda}(\mathbb R^{n})}=\sup\limits_{r>0}\Big(\frac{1}{|B(0,r)|^{1+\lambda q}}\int_{B(0,r)}|f(x)|^qdx\Big)^{1/q}<\infty.$$
\end{definition}

In \cite{FL1}, Fu et al. proved the following sharp estimates for the $n$-dimensional spherical  averaging operator $\mathcal{H}$.

{\bf Theorem D}  Let $1<q<\infty$ and $-1/q\leq\lambda\leq0$. Then
$$\|\mathcal{H}f\|_{\mathcal{B}^{q,\lambda}(\mathbb R^{n})}\leq\frac{1}{1+\lambda}\|f\|_{\mathcal{B}^{q,\lambda}(\mathbb R^{n})}.$$
Moreover,
$$\|\mathcal{H}\|_{\mathcal{B}^{q,\lambda}(\mathbb R^{n})\rightarrow \mathcal{B}^{q,\lambda}(\mathbb R^{n})}=\frac{1}{1+\lambda}.$$

{\bf Theorem E}  Let $1<q<\infty$ and $-1/q<\lambda<1/n$. Then
$$\|\mathcal{H}f\|_{{CMO^{q,\lambda}(\mathbb R^n)}}\leq\frac{1}{1+\lambda}\|f\|_{{CMO^{q,\lambda}(\mathbb R^n)}}.$$
Moreover,
$$\|\mathcal{H}\|_{{CMO^{q,\lambda}(\mathbb R^n)}\rightarrow {CMO^{q,\lambda}(\mathbb R^n)}}=\frac{1}{1+\lambda}.$$
Furthermore,
$$\|\mathcal{H}\|_{{CMO^{q}(\mathbb R^n)}\rightarrow {CMO^{q}(\mathbb R^n)}}=1.$$

In \cite{FLLZ}, Fu et al. established the following sharp bound for the $m$-linear Hardy operator $\mathcal{H}^{m}$.

{\bf Theorem F} (\cite{FLLZ})
Let $m\in\mathbb N$,  $f_i\in \mathcal{B}^{p_i,\lambda_i}(\mathbb R^{n})$, $1<p_i<\infty$, $1<p<\infty$, $\frac{1}{p}=\frac{1}{p_1}+\cdots+\frac{1}{p_m}$, $-1/p_i\leq\lambda_i<0$ $i=1,\ldots,m$, and $\lambda=\lambda_1+\cdots+\lambda_m$. Then $m$-linear Hardy operator $\mathcal{H}^{m}$ maps $\mathcal{B}^{p_1,\lambda_1}(\mathbb R^{n})\times\cdots\times \mathcal{B}^{p_2,\lambda_2}(\mathbb R^{n})$ to  $\mathcal{B}^{p,\lambda}(\mathbb R^{n})$ with norm equal to the constant
$$\frac{\omega_n^m}{\omega_{mn}}\frac{m}{\lambda+m}\frac{1}{2^m-1}\frac{\prod_{i=1}^{m}\Gamma(\frac{n}{2}(1+\lambda_i))}{\Gamma(\frac{n}{2}(m+\lambda))}.$$

Let $B_k=\{x\in\mathbb R^n : |x|\leq 2^k \}$, $C_k=B_k\backslash B_{k-1}$ and $\chi_k=\chi_{C_k}$ for $k\in\mathbb Z$, where $\chi_{E}$ is the characteristic function of set $E$.

\begin{definition}\label{de1.5} (\cite{LD}) Let $\alpha\in\mathbb R$, $0<p,q<\infty$. The homogeneous Herz spaces $\dot{K}^{\alpha,q}_{p}(\mathbb R^{n})$
are defined by
$$\dot{K}^{\alpha,q}_{p}(\mathbb R^{n})=\Big\{f\in L^p_{loc}(\mathbb R^{n} \setminus{\{0\}}): \|f\|_{{\dot{K}}^{\alpha,q}_{p}(\mathbb R^{n})}<\infty\Big\},$$
where
$$\|f\|_{{\dot{K}}^{\alpha,q}_{p}(\mathbb R^{n})}=\Big\{\sum\limits_{k=-\infty}^{\infty}2^{k\alpha q} \|f\chi_k\|^q_{L^p(\mathbb R^{n})}\Big\}^{1/q}.$$
\end{definition}

{\bf Theorem G} (\cite{FL}) Let $\alpha\in\mathbb{R}$, $1<p,q<\infty$. Then the Hardy type operator $U_{\psi}$ is bounded from
$\dot{K}^{\alpha,q}_{p}(\mathbb R^{n})$ to itself if
$$\int_{0}^{1}t^{-\alpha-n/p}\psi(t)dt.$$
Moreover, the operator norm of $U_{\psi}$ satisfies
$$\|U_{\psi}\|_{\dot{K}^{\alpha,q}_{p}(\mathbb R^{n})\rightarrow \dot{K}^{\alpha,q}_{p}(\mathbb R^{n})}\simeq\int_{0}^{1}t^{-\alpha-n/p}\psi(t)dt,$$
and the operator norm of $V_{\psi}$ satisfies
$$\|V_{\psi}\|_{\dot{K}^{\alpha,q}_{p}(\mathbb R^{n})\rightarrow \dot{K}^{\alpha,q}_{p}(\mathbb R^{n})}\simeq\int_{0}^{1}t^{-\alpha-n+n/p}\psi(t)dt.$$

\begin{definition}\label{de1.5} (\cite{LX}) Let $\alpha\in\mathbb R$, $0<p, q<\infty$ and $\lambda\geq0$. The homogeneous Morrey-Herz spaces $M\dot{K}^{\alpha,\lambda}_{p,q}(\mathbb R^{n})$
are defined by
$$M\dot{K}^{\alpha,\lambda}_{p,q}(\mathbb R^{n})=\Big\{f\in L^p_{loc}(\mathbb R^{n} \setminus{\{0\}}): \|f\|_{M{\dot{K}}^{\alpha,\lambda}_{p,q}(\mathbb R^{n})}<\infty\Big\},$$
where
$$\|f\|_{{M\dot{K}}^{\alpha,\lambda}_{p,q}(\mathbb R^{n})}=\sup\limits_{k_0\in\mathbb{Z}}2^{-k_0\lambda}\Big\{\sum\limits_{k=-\infty}^{k_0}2^{k\alpha q} \|f\chi_k\|^q_{L^p(\mathbb R^{n})}\Big\}^{1/q}.$$
\end{definition}

{\bf Theorem H} (\cite{FL}) Let $\alpha\in\mathbb{R}$, $1<p,q<\infty$ and $\lambda>0$. Then the Hardy type operator $U_{\psi}$ is bounded from
$M\dot{K}^{\alpha,\lambda}_{p,q}(\mathbb R^{n})$ to itself if
$$\int_{0}^{1}t^{-\alpha-n/p+\lambda}\psi(t)dt.$$
Moreover, the operator norm of $U_{\psi}$ satisfies
$$\|U_{\psi}\|_{M\dot{K}^{\alpha,\lambda}_{p,q}(\mathbb R^{n})\rightarrow M\dot{K}^{\alpha,\lambda}_{p,q}(\mathbb R^{n})}\simeq\int_{0}^{1}t^{-\alpha-n/p+\lambda}\psi(t)dt,$$
and the operator norm of $V_{\psi}$ satisfies
$$\|V_{\psi}\|_{M\dot{K}^{\alpha,\lambda}_{p,q}(\mathbb R^{n})\rightarrow M\dot{K}^{\alpha,\lambda}_{p,q}(\mathbb R^{n})}\simeq\int_{0}^{1}t^{\alpha-n+n/p-\lambda}\psi(t)dt.$$
\section{Some mixed radial-angular type function spaces}

Let $S^{n-1}$ be the unit sphere in $\mathbb R^n$, $n\geq 2$, with  Lebesgue measure $d\sigma=d\sigma(\cdot)$. For any $f\in L^p(\mathbb R^n)$, $1\leq p<\infty$, applying the spherical coordinate formula, we write
\begin{align*}
\|f\|_{L^p(\mathbb R^n)}&=\Big(\int_{0}^{\infty}\int_{S^{n-1}}|f(r\theta)|^pd\sigma(\theta)r^{n-1}dr\Big)^{1/p}\\
&=\Big(\int_{0}^{\infty}\|f(r\cdot)\|^p_{L^p(S^{n-1})}r^{n-1}dr\Big)^{1/p}.
\end{align*}
Therefore, from the perspective of radial and angular integrabilities, Lebesgue norms can be regarded as certain special norms with the same integrabilities in the radial and angular directions.  Motivated by this form,  we naturally consider the case of Lebesgue norms with different integrabilities in the radial and angular directions, namely,
 $$\|f\|_{L^{p}_{rad}L^{\tilde{p}}_{ang}(\mathbb R^n)}:=\bigg(\int_{0}^{\infty}\|f(r\cdot)\|^p_{L^{\tilde{p}}(S^{n-1})}r^{n-1}dr\bigg)^{{1}/{p}},\quad  1\leq p,
\tilde{p}\leq\infty,$$
and when $p=\infty$ or $\tilde{p}=\infty$, we just need to make the usual modifications in the
above definition, but we do not use these cases in the current work.

In addition, the mixed radial-angular space ${L^{p}_{rad}L^{\tilde{p}}_{ang}(\mathbb R^n)}$, as a formal extension of the Lebesgue spaces $L^p(\mathbb R^n)$, was introduced to study of regularity and some important estimates, such as angular regularity and Strichartz estimates (see \cite{CL,DL2,St,Tao} etc.). Recently, the first author et al. also established the boundedness of some classical operators with rough kernels on mixed radial-angular spaces in \cite{LLW,LLW2,LW,LRW,LTW,LTW2}.

In 2018, Duoandikoetxea et al. \cite{DR} defined the following  weighted mixed-norm, and established some  weighted mixed-norm inequalities by the usual extrapolation methods.

\begin{definition}\label{de2.1} Let $\omega$ be a nonnegative locally integrable function on $\mathbb R^+$, for any $1\leq p,\tilde{p}<\infty$, the weighted mixed radial-angular spaces  ${L^{p}_{rad}L^{\tilde{p}}_{ang}(\mathbb R^n, \omega)}$ are defined by

 $${{L^{p}_{rad}L^{\tilde{p}}_{ang}(\mathbb R^n, \omega)}}=\big\{f:\,\|f\|_{L^{p}_{rad}L^{\tilde{p}}_{ang}(\mathbb R^n, \omega)} <\infty\big\},$$
 where
 $$\|f\|_{L^{p}_{rad}L^{\tilde{p}}_{ang}(\mathbb R^n, \omega)}:=\bigg(\int_{0}^{\infty}\|f(r\cdot)\|^p_{L^{\tilde{p}}(S^{n-1})}r^{n-1}\omega(r)dr\bigg)^{{1}/{p}}.$$
\end{definition}

Whereafter, the first author and third author in  \cite{LT} proved the following conclusion.

\begin{theorem}\label{th2.1}
Let $1<p_i, q_i, \tilde{q}_{i}<\infty$, $n_i\in\mathbb N$, $x_i\in\mathbb R^{n_i}$,  $r_i\in (0,\infty)$, $i=1,\ldots,m$. If $f\in L^{\vec{p}}_{rad}L^{\vec{\tilde{q}}}_{ang}(\mathbb R^{\vec{n}}, r^{\vec{\alpha}})$, where $r^{\vec{\alpha}}=r_1^{{\alpha}_1}\times\cdots \times r_2^{{\alpha}_m}$ and ${\alpha}_i<(p_i-1)n_i$. Then the Hardy type operator $\mathcal{H}_m$ is bounded from $L^{\vec{p}}_{rad}L^{\vec{\tilde{q}}}_{ang}(\mathbb R^{\vec{n}}, r^{\vec{\alpha}})$ to  $L^{\vec{p}}_{rad}L^{\vec{q}}_{ang}(\mathbb R^{\vec{n}}, r^{\vec{\alpha}})$, moreover, the norm of $\mathcal{H}_m$ can be obtained as follows:

$$\|\mathcal{H}_m\|_{L^{\vec{p}}_{rad}L^{\vec{q}}_{ang}(\mathbb R^{\vec{n}}, r^{\vec{\alpha}})\rightarrow L^{\vec{p}}_{rad}L^{\vec{\tilde{q}}}_{ang}(\mathbb R^{\vec{n}}, r^{\vec{\alpha}})}=\prod\limits_{i=1}^{m}{{\omega}_i}^{\frac{1}{\vec{q}_{i}}-\frac{1}{\vec{\tilde{q}}_{i}}}\bigg(\frac{p_i}{p_i-1-\frac{{\alpha}_i}{n_j}}\bigg). $$
\end{theorem}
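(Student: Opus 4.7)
The plan is to reduce to multi-radial inputs and invoke the iterated Hardy inequality. From the definition, $\mathcal{H}_m f(x)$ depends only on $(|x_1|,\ldots,|x_m|)$, hence is multi-radial (radial in each variable separately). Introducing the multi-angular average
\[
\bar f(x):=\prod_{i=1}^{m}\frac{1}{\omega_{n_i}}\int_{S^{n_1-1}}\!\!\cdots\!\!\int_{S^{n_m-1}} f(|x_1|\theta_1',\ldots,|x_m|\theta_m')\prod_{i=1}^{m}d\sigma(\theta_i'),
\]
Fubini combined with polar coordinates in each $y_i$ yields the key identity $\mathcal{H}_m f=\mathcal{H}_m\bar f$, so it suffices to bound $\mathcal{H}_m\bar f$.

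For the upper bound, I apply H\"older in each angular variable successively to obtain
\[
|\bar f(x)|\le\prod_{i=1}^{m}\omega_{n_i}^{-1/\tilde q_i}\bigl\|f(|x_1|\cdot,\ldots,|x_m|\cdot)\bigr\|_{L^{\vec{\tilde q}}_{ang}},
\]
so that $\|\bar f\|_{L^{\vec{p}}_{rad}(r^{\vec\alpha+\vec n-1})}\le\prod\omega_{n_i}^{-1/\tilde q_i}\|f\|_{L^{\vec{p}}_{rad}L^{\vec{\tilde q}}_{ang}(r^{\vec\alpha})}$. The vector-exponent analogue of Theorem~B (proved for multi-radial $\bar f$ by the substitution $y_i=|x_i|z_i$ and Minkowski's integral inequality) yields
\[
\|\mathcal{H}_m\bar f\|_{L^{\vec{p}}_{rad}(r^{\vec\alpha+\vec n-1})}\le\prod_{i=1}^{m}\frac{p_i}{p_i-1-\alpha_i/n_i}\,\|\bar f\|_{L^{\vec{p}}_{rad}(r^{\vec\alpha+\vec n-1})}.
\]
Finally, multi-radiality of $\mathcal{H}_m\bar f$ gives $\|\mathcal{H}_m\bar f\|_{L^{\vec{p}}_{rad}L^{\vec{q}}_{ang}(r^{\vec\alpha})}=\prod\omega_{n_i}^{1/q_i}\|\mathcal{H}_m\bar f\|_{L^{\vec{p}}_{rad}(r^{\vec\alpha+\vec n-1})}$, and chaining these three estimates produces the upper bound $\prod\omega_{n_i}^{1/q_i-1/\tilde q_i}\prod\frac{p_i}{p_i-1-\alpha_i/n_i}$.

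For sharpness, note that on multi-radial $f$ all H\"older steps above are saturated, and the mixed-norm ratio is simply $\prod\omega_{n_i}^{1/q_i-1/\tilde q_i}$ times the purely radial Hardy ratio. Testing with the truncated power extremizers
\[
f_{\vec\varepsilon}(y)=\prod_{i=1}^{m}|y_i|^{-(n_i+\alpha_i)/p_i-\varepsilon_i}\chi_{\{|y_i|>1\}}
\]
and letting $\vec\varepsilon\to 0^{+}$ drives the radial ratio to $\prod\frac{p_i}{p_i-1-\alpha_i/n_i}$, so the operator norm equals the claimed constant. The main technical obstacle is bookkeeping the three distinct exponents of $\omega_{n_i}$ ($1/p_i$, $1/q_i$, $1/\tilde q_i$) that appear during the conversions between pure Lebesgue, multi-radial, and mixed radial-angular norms; the conceptual point is that $\mathcal{H}_m$ factors through $f\mapsto\bar f$, so multi-radial functions simultaneously saturate both the angular H\"older step and the radial Hardy extremal problem, which is why the upper and lower bounds coincide.
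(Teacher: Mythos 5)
Your argument is correct and is essentially the approach this paper uses for its own analogous results (Theorems \ref{th3.1} and \ref{th3.2}); note that the paper does not actually prove Theorem \ref{th2.1} here but quotes it from \cite{LT}, and the quoted proofs there and in Section 3 proceed exactly as you do: reduce to (multi-)radial functions via the spherical average and the identity $\mathcal{H}_m f=\mathcal{H}_m\bar f$, get the upper bound from H\"older in the angular variables plus Minkowski's integral inequality after the substitution $y_i=|x_i|z_i$, and obtain sharpness by testing with the truncated powers $\prod_i|y_i|^{-(n_i+\alpha_i)/p_i-\varepsilon_i}\chi_{\{|y_i|>1\}}$ and letting $\vec\varepsilon\to0^+$. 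Two cosmetic remarks: only the exponents $1/q_i$ and $1/\tilde q_i$ of $\omega_{n_i}$ survive into the final constant (the $1/p_i$ powers cancel, consistent with the stated norm), and the $n_j$ in the theorem's displayed constant is a typo for $n_i$.
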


Inspired by the mixed radial-angular integrabilities, the first author et al. in \cite{LTW1,LTW2} introduced some other mixed radial-angular type function spaces as follows.

\begin{definition}\label{de2.2}
Let $1<p_1, p_2<\infty$. A function $f\in L^{p_2}_{\rm  rad,{loc}}L^{p_1}_{\rm ang}((0,\infty)\times{S}^{n-1})$ is said to belong to the mixed radial-angular homogeneous CMO  spaces ${CMOL^{p_2}_{\rm rad}L^{p_1}_{\rm ang}(\mathbb R^n)}$, if
\begin{small}
\begin{equation*}
\|f\|_{{CMOL^{p_2}_{\rm rad}L^{p_1}_{\rm ang}(\mathbb R^n)}}=\sup\limits_{r>0}\Big(\frac{1}{\nu_nr^n}\int_{0}^{r}\Big(\int_{{S}^{n-1}}|f(\rho\theta)-f_{B}|^{p_1}d\sigma(\theta)\Big)^{p_2/p_1}\rho^{n-1}d\rho\Big)^{1/p_2}<\infty,
\end{equation*}
\end{small}
where $f\in L^{p_2}_{\rm rad,{loc}}L^{p_1}_{\rm ang}((0,\infty)\times{S}^{n-1})$ means that the function $f$ defined on $(0,r_0)$ \\$\times{S}^{n-1}\subset(0,\infty)\times{S}^{n-1}$ satisfies
$\|f\|_{L^{p_2}_{\rm rad}L^{p_1}_{\rm ang}((0,r_0)\times{S}^{n-1})}<\infty$ for any $r_0\in(0,\infty)$.
\end{definition}

\begin{remark}\label{re2.3}
(1) if $1<p_1=p_2=p<\infty$, then
$${CMOL^{p_2}_{\rm rad}L^{p_1}_{\rm ang}(\mathbb R^n)}={CMO^{p}(\mathbb R^n)}.$$
(2) if $1<p_1\leq \tilde{p_1}<\infty$, then by the H\"{o}lder inequality, we have

$${CMOL^{p_2}_{\rm rad}L^{\tilde{p_1}}_{\rm ang}(\mathbb R^n)}\subset{CMOL^{p_2}_{\rm rad}L^{p_1}_{\rm ang}(\mathbb R^n)}.$$
(3) if $1<p_2\leq \tilde{p_2}<\infty$, then by the H\"{o}lder inequality, we also have

$${CMOL^{\tilde{p_2}}_{\rm rad}L^{{p_1}}_{\rm ang}(\mathbb R^n)}\subset{CMOL^{p_2}_{\rm rad}L^{p_1}_{\rm ang}(\mathbb R^n)}.$$
\end{remark}

\begin{definition}\label{de2.4}
Let $1<p_1, p_2<\infty$  and $-1/p_2<\lambda<1/n$. A function $f\in L^{p_2}_{\rm rad,{loc}}L^{p_1}_{\rm ang}((0,\infty)\times{S}^{n-1})$ is said to belong to the mixed radial-angular homogeneous $\lambda$-central bounded mean oscillation  spaces ${CMOL^{p_2, \lambda}_{\rm rad}L^{p_1}_{\rm ang}(\mathbb R^n)}$, if
\begin{align*}
&\|f\|_{{CMOL^{p_2,\lambda}_{\rm rad}L^{p_1}_{\rm ang}(\mathbb R^n)}}\\
&=\sup\limits_{r>0}\Big(\frac{1}{\nu_nr^{n+n\lambda p_2}}\int_{0}^{r}\Big(\int_{{S}^{n-1}}|f(\rho\theta)-f_{B}|^{p_1}d\sigma(\theta)\Big)^{p_2/p_1}\rho^{n-1}d\rho\Big)^{1/p_2}<\infty.
\end{align*}
\end{definition}

\begin{remark}\label{re2.5}
(1) if $1<p_1=p_2=p<\infty$ and $-1/p_2<\lambda<1/n$, then
$${CMOL^{p_2,\lambda}_{\rm rad}L^{p_1}_{\rm ang}(\mathbb R^n)}={CMO^{p,\lambda}(\mathbb R^n)}.$$
(2) if $1<p_1\leq \tilde{p_1}<\infty$, and $-1/p_2<\lambda<1/n$, then  we have
$${CMOL^{p_2,\lambda}_{\rm rad}L^{\tilde{p_1}}_{\rm ang}(\mathbb R^n)}\subset{CMOL^{p_2,\lambda}_{\rm rad}L^{p_1}_{\rm ang}(\mathbb R^n)}.$$
(3) if $1<p_2\leq \tilde{p_2}<\infty$ and $-1/p_2<\lambda<1/n$, then we also have
$${CMOL^{\tilde{p_2},\lambda}_{\rm rad}L^{{p_1}}_{\rm ang}(\mathbb R^n)}\subset{CMOL^{p_2,\lambda}_{\rm rad}L^{p_1}_{\rm ang}(\mathbb R^n)}.$$
(4) for $1<p_1, p_2<\infty$ and $-1/p_2<\lambda<1/n$. Then $f\in{CMOL^{p_2,\lambda}_{\rm rad}L^{p_1}_{\rm ang}(\mathbb R^n)}$ $\Longleftrightarrow$ there exists the constant $a_B$ related to the ball $B(0,r)$ such that
 $$\int_{0}^{r}\Big(\int_{{S}^{n-1}}|f(\rho\theta)-a_B|^{p_1}d\sigma(\theta)\Big)^{p_2/p_1}\rho^{n-1}d\rho\lesssim r^{n+n\lambda p_2}.$$
(5) for $1<p_1, p_2<\infty$ and $-1/p_2<\lambda<1/n$. Then $f\in{CMOL^{p_2,\lambda}_{\rm rad}L^{p_1}_{\rm ang}(\mathbb R^n)}$ $\Longleftrightarrow$
 $$\sup\limits_{r>0}\inf\limits_{c\in\mathbb{C}}\Big(\frac{1}{\nu_nr^{n+n\lambda p_2}}\int_{0}^{r}\Big(\int_{{S}^{n-1}}|f(\rho\theta)-c|^{p_1}d\sigma(\theta)\Big)^{p_2/p_1}\rho^{n-1}d\rho\Big)^{1/p_2}<\infty.$$
\end{remark}

\begin{definition}\label{de2.6}
Let $1<p_1, p_2<\infty$  and $\lambda\in \mathbb{R}$. A function $f\in L^{p_2}_{\rm rad,{loc}}L^{p_1}_{\rm ang}((0,\infty)$ $\times{S}^{n-1})$ is said to belong to the  mixed radial-angular homogeneous $\lambda$-central Morrey spaces ${\mathcal{B}L^{p_2,\lambda}_{\rm rad}L^{p_1}_{\rm ang}(\mathbb R^n)}$, if
\begin{small}
$$\|f\|_{{\mathcal{B}L^{p_2,\lambda}_{\rm rad}L^{p_1}_{\rm ang}(\mathbb R^n)}}=\sup\limits_{r>0}\Big(\frac{1}{\nu_nr^{n+n\lambda p_2}}\int_{0}^{r}\Big(\int_{{S}^{n-1}}|f(\rho\theta)|^{p_1}d\sigma(\theta)\Big)^{p_2/p_1}\rho^{n-1}d\rho\Big)^{1/p_2}<\infty.$$
\end{small}
\end{definition}

\begin{definition}\label{de2.7} Let $\alpha\in\mathbb R$, $0<p_1,p_2, q<\infty$. The mixed radial-angular homogeneous Herz spaces $\dot{K}^{\alpha,q}_{L^{p_2}_{\rm rad}L^{p_1}_{\rm ang}}(\mathbb R^{n})$
are defined by
$$\dot{K}^{\alpha,q}_{L^{p_2}_{\rm rad}L^{p_1}_{\rm ang}}(\mathbb R^{n})=\Big\{f\in L^{p_2}_{\rm rad,{loc}}L^{p_1}_{\rm ang}((0,\infty)\times{S}^{n-1}) \setminus{\{0\}}): \|f\|_{\dot{K}^{\alpha,q}_{L^{p_2}_{\rm rad}L^{p_1}_{\rm ang}}(\mathbb R^{n})}<\infty\Big\},$$
where
$$\|f\|_{{\dot{K}}^{\alpha,q}_{L^{p_2}_{\rm rad}L^{p_1}_{\rm ang}}(\mathbb R^{n})}=\Big\{\sum\limits_{k=-\infty}^{\infty}2^{k\alpha q} \|f\chi_k\|^q_{L^{p_2}_{\rm rad}L^{p_1}_{\rm ang}(\mathbb R^{n})}\Big\}^{1/q}.$$
\end{definition}

\begin{remark}\label{re2.8}
(1) ${\dot{K}}^{\alpha,q}_{L^{p}_{\rm rad}L^{p}_{\rm ang}}(\mathbb R^{n})=\dot{K}^{\alpha,q}_{p}(\mathbb R^{n})$, ${\dot{K}}^{0,p_2}_{L^{p_2}_{\rm rad}L^{p_1}_{\rm ang}}(\mathbb R^{n})=L^{p_2}_{\rm rad}L^{p_1}_{\rm ang}(\mathbb R^{n})$, ${\dot{K}}^{0,p}_{L^{p}_{\rm rad}L^{p}_{\rm ang}}(\mathbb R^{n})=L^p(\mathbb R^{n})$ and ${\dot{K}}^{\alpha/q,q}_{L^{q}_{\rm rad}L^{q}_{\rm ang}}(\mathbb R^{n})=L^q(|x|^{\alpha}dx)$.\\
(2) if $0<q_1\leq q_2<\infty$, then ${\dot{K}}^{\alpha,q_2}_{L^{p_2}_{\rm rad}L^{p_1}_{\rm ang}}(\mathbb R^{n})\subset{\dot{K}}^{\alpha,q_1}_{L^{p_2}_{\rm rad}L^{p_1}_{\rm ang}}(\mathbb R^{n})$.\\
(3) if $\alpha_2\leq \alpha_2$, then ${\dot{K}}^{\alpha_1,q}_{L^{p_2}_{\rm rad}L^{p_1}_{\rm ang}}(\mathbb R^{n})\subset{\dot{K}}^{\alpha_2,q}_{L^{p_2}_{\rm rad}L^{p_1}_{\rm ang}}(\mathbb R^{n})$.\\
(4) if $1<p_1\leq \tilde{p_1}<\infty$, then ${\dot{K}}^{\alpha_1,q}_{L^{{p_2}}_{\rm rad}L^{\tilde{p_1}}_{\rm ang}}(\mathbb R^{n})\subset{\dot{K}}^{\alpha_2,q}_{L^{p_2}_{\rm rad}L^{p_1}_{\rm ang}}(\mathbb R^{n})$.
\end{remark}

\begin{definition}\label{de2.9}  Let $\alpha\in\mathbb R$, $0<p_1,p_2, q<\infty$ and $\lambda\geq0$. The mixed radial-angular homogeneous Morrey-Herz spaces $M\dot{K}^{\alpha,q,\lambda}_{L^{p_2}_{\rm rad}L^{p_1}_{\rm ang}}(\mathbb R^{n})$
are defined by
$$M\dot{K}^{\alpha,q,\lambda}_{L^{p_2}_{\rm rad}L^{p_1}_{\rm ang}}(\mathbb R^{n})=\Big\{f\in L^{p_2}_{\rm rad,{loc}}L^{p_1}_{\rm ang}((0,\infty)\times{S}^{n-1}) \setminus{\{0\}}): \|f\|_{M\dot{K}^{\alpha,q,\lambda}_{L^{p_2}_{\rm rad}L^{p_1}_{\rm ang}}(\mathbb R^{n})}<\infty\Big\},$$
where
$$\|f\|_{{M\dot{K}}^{\alpha,q,\lambda}_{L^{p_2}_{\rm rad}L^{p_1}_{\rm ang}}(\mathbb R^{n})}=\sup\limits_{k_0\in\mathbb{Z}}2^{-k_0\lambda}\Big\{\sum\limits_{k=-\infty}^{k_0}2^{k\alpha q} \|f\chi_k\|^q_{L^{p_2}_{\rm rad}L^{p_1}_{\rm ang}(\mathbb R^{n})}\Big\}^{1/q}.$$
\end{definition}

\begin{remark}\label{re2.10} If $\lambda=0$, then ${M\dot{K}}^{\alpha,q,\lambda}_{L^{p_2}_{\rm rad}L^{p_1}_{\rm ang}}(\mathbb R^{n})={\dot{K}}^{\alpha,q}_{L^{p_2}_{\rm rad}L^{p_1}_{\rm ang}}(\mathbb R^{n})$.
\end{remark}

The main purpose in this paper is to consider some sharp bounds for Hardy type operators on these mixed radial-angular type function spaces. In addition, we  also will  establish the sharp weak-type estimates for the fractional Hardy operator and its conjugate operator, respectively.

In what follows, $A\simeq B$ denotes $A$ is equivalent to $B$, that means there exist two positive constants $C_1$ and $C_2$ such that $C_1A\leq B\leq C_2A$. The usual beta function $B(z,\omega)=\int_{0}^{1}t^{z-1}(1-t)^{\omega-1}dt$, where $z$ and $\omega$ are complex numbers with positive real parts, and the gamma function $\Gamma(z)=\int_{0}^{\infty}t^{z-1}e^{-t}dt$, where $z$ is a  complex number with positive real part, and these two functions have the following relationship:
$B(z,\omega)\Gamma(z+\omega)=\Gamma(z)\Gamma(\omega)$.

The rest of this paper is organized as follows. In Section 3, we will establish some sharp bounds for Hardy type operators on mixed radial-angular type function spaces.
Several kinds of sharp weak-type estimates for the fractional Hardy operator will be obtained in Section 4. Finally, we will give some sharp bounds of weighted Hardy-Littlewood averages  on mixed radial-angular Herz type function spaces. We would like to remark that the main ideas of our proofs
are taken from \cite{FL,FL1,FLLZ,GHC,YL}.

\section{Some sharp bounds of Hardy type operators on mixed radial-angular type function spaces}

In this section, we will establish some sharp bounds of Hardy type operators on mixed radial-angular type function spaces. Our main conclusions stated as  follows.
\begin{theorem}\label{th3.1}
Let $m\in\mathbb N$, $r\in (0,\infty)$, $1<p_i,\tilde{p}_{i}<\infty$, $1\leq p,\tilde{p}<\infty$,  $f\in L^{p_i}_{rad}L^{\tilde{p_i}}_{ang}(\mathbb R^{n}, r^{\frac{{\alpha}_ip_i}{p}})$, $i=1,\ldots,m$, $\frac{1}{p}=\frac{1}{p_1}+\cdots\frac{1}{p_m}$, $\alpha_i<(1-1/p_i)pn$ and $\alpha=\alpha_1+\cdots+\alpha_m$. Then $m$-linear Hardy operator $\mathcal{H}^{m}$ maps the product of weighted Lebesgue spaces  $L^{p_1}_{rad}L^{\tilde{p_1}}_{ang}(\mathbb R^{n}, r^{\frac{{\alpha}_1p_1}{p}})\times\cdots\times L^{p_m}_{rad}L^{\tilde{p_m}}_{ang}(\mathbb R^{n}, r^{\frac{{\alpha}_mp_m}{p}})$ to $L^{p}_{rad}L^{\tilde{p}}_{ang}(\mathbb R^{n}, r^{{\alpha}})$ with norm equal to the constant
$${{\omega}_n}^{\frac{1}{\tilde{p}}-\sum\limits_{i=1}^{m}\frac{1}{{\tilde{p}}_{i}}}\frac{\omega_n^m}{\omega_{mn}}\frac{pmn}{{pmn-n-\alpha}}\frac{1}{2^m-1}\frac{\prod_{i=1}^{m}\Gamma(\frac{n}{2}(1-\frac{1}{p_i}-\frac{\alpha_i}{p_i}))}{\Gamma(\frac{n}{2}(1-\frac{1}{p}-\frac{\alpha}{p}))}.$$
\end{theorem}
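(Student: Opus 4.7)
The plan is to reduce the mixed radial–angular bound to the sharp weighted Lebesgue bound of Theorem C by applying H\"older on each sphere $S^{n-1}$, and then to carefully track powers of $\omega_n$ under the conversion between the two norm conventions. We may assume $f_i\geq 0$, and note that $\mathcal{H}^m(f)$ is a radial function of $x$. Writing each $y_i=s_i\theta_i$ with $s_i=|y_i|$ and $\theta_i\in S^{n-1}$, the constraint $|(y_1,\ldots,y_m)|<|x|$ becomes $\sum s_i^2<|x|^2$, which decouples the angular variables:
$$\mathcal{H}^m(f)(x)=\frac{1}{\nu_{mn}|x|^{mn}}\int_{\Omega(|x|)}\Big(\prod_{i=1}^m s_i^{n-1}\Big)\prod_{i=1}^m\Big(\int_{S^{n-1}}f_i(s_i\theta_i)\,d\sigma(\theta_i)\Big)ds_1\cdots ds_m,$$
where $\Omega(r):=\{s\in(0,\infty)^m:s_1^2+\cdots+s_m^2<r^2\}$. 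H\"older on $S^{n-1}$ yields $\int_{S^{n-1}}f_i(s_i\theta_i)\,d\sigma\leq \omega_n^{1-1/\tilde p_i}F_i(s_i)$ with $F_i(s):=\|f_i(s\cdot)\|_{L^{\tilde p_i}(S^{n-1})}$. Setting $\tilde F_i(y):=F_i(|y|)$, a radial function on $\mathbb R^n$, and recognizing the remaining $s$-integral as $\omega_n^{-m}\mathcal{H}^m(\tilde F)(x)$ (rewrite $\mathcal{H}^m(\tilde F)$ itself in polar coordinates to expose the factor $\omega_n^m$), one obtains the pointwise majorization
$$\mathcal{H}^m(f)(x)\leq \omega_n^{-\sum_{i=1}^m 1/\tilde p_i}\,\mathcal{H}^m(\tilde F)(x).$$

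\textbf{Norm conversion and application of Theorem C.} For any radial $g(x)=\phi(|x|)$, comparing $\|g\|_{L^p_{rad}L^{\tilde p}_{ang}(\mathbb R^n,r^\alpha)}^p=\omega_n^{p/\tilde p}\int_0^\infty|\phi|^p r^{n-1+\alpha}dr$ with $\|g\|_{L^p(\mathbb R^n,|x|^\alpha dx)}^p=\omega_n\int_0^\infty|\phi|^p r^{n-1+\alpha}dr$ produces the identity $\|g\|_{L^p_{rad}L^{\tilde p}_{ang}(r^\alpha)}=\omega_n^{1/\tilde p-1/p}\|g\|_{L^p(|x|^\alpha)}$. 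Applying this identity to the radial output $\mathcal{H}^m(\tilde F)$ and the analogous $\|\tilde F_i\|_{L^{p_i}(|x|^{\alpha_ip_i/p})}=\omega_n^{1/p_i}\|f_i\|_{L^{p_i}_{rad}L^{\tilde p_i}_{ang}(r^{\alpha_ip_i/p})}$ to each input, and finally invoking Theorem C for the radial tuple $\tilde F=(\tilde F_1,\ldots,\tilde F_m)$, the cumulative power of $\omega_n$ collected along the chain is
$$-\sum_{i=1}^m\frac{1}{\tilde p_i}+\Big(\frac{1}{\tilde p}-\frac{1}{p}\Big)+\sum_{i=1}^m\frac{1}{p_i}=\frac{1}{\tilde p}-\sum_{i=1}^m\frac{1}{\tilde p_i},$$
using the normalization $\sum 1/p_i=1/p$. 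Denoting the sharp constant of Theorem C by $C_0$, the upper bound is thus exactly $\omega_n^{1/\tilde p-\sum 1/\tilde p_i}C_0$, which is the asserted value.

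\textbf{Sharpness and main obstacle.} For the reverse inequality, choose a \emph{radial} extremizing tuple $(f_1^{(k)},\ldots,f_m^{(k)})$ for Theorem C; this is possible because the standard extremizing sequences for the $m$-linear Hardy operator on power-weighted $L^p$ are of the form $|x|^{\gamma_i}$ times characteristic functions of annuli, which are manifestly radial. For radial $f_i^{(k)}$ the spherical H\"older step in the first paragraph becomes an equality, since $\int_{S^{n-1}}f_i^{(k)}(s\theta)\,d\sigma=\omega_n^{1-1/\tilde p_i}F_i^{(k)}(s)$, and the norm conversions of the second paragraph become exact equalities. Therefore the ratio $\|\mathcal{H}^m(f^{(k)})\|_{L^p_{rad}L^{\tilde p}_{ang}(r^\alpha)}/\prod_i\|f_i^{(k)}\|_{L^{p_i}_{rad}L^{\tilde p_i}_{ang}(r^{\alpha_ip_i/p})}$ tends to precisely $\omega_n^{1/\tilde p-\sum 1/\tilde p_i}C_0$, matching the upper bound. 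The main subtlety in the whole argument is not analytic but combinatorial: correctly accounting for every $\omega_n$-factor produced by H\"older on the sphere and by the Lebesgue/mixed-norm conversion for radial functions, and verifying that these powers collapse to the clean expression $1/\tilde p-\sum 1/\tilde p_i$ that appears in the final constant.
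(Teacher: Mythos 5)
Your argument is correct, and it takes a genuinely different route from the paper. The paper proves the theorem from scratch: it first reduces to radial inputs by replacing each $f_i$ with its spherical average $g_{f_i}(x)=\omega_n^{-1}\int_{S^{n-1}}f_i(|x|\xi)\,d\sigma(\xi)$ (Minkowski's inequality showing this can only increase the ratio), then redoes the entire Fu--Grafakos--Lu--Zhao computation in the mixed-norm setting -- Minkowski's integral inequality, H\"older, evaluation of $\int_{|(z_1,\ldots,z_m)|<1}\prod|z_i|^{-n/p_i}\,dz$ via Beta and Gamma functions -- and finally constructs explicit $\varepsilon$-dependent power-type test functions, computing their norms and letting $\varepsilon\to 0$ for the matching lower bound; moreover it only carries this out for $m=1,2$ and asserts the general case is similar. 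You instead black-box Theorem C: the pointwise domination $\mathcal{H}^m(f)\leq\omega_n^{-\sum 1/\tilde p_i}\mathcal{H}^m(\tilde F)$ obtained by H\"older on each sphere, combined with the exact identities $\|g\|_{L^p_{rad}L^{\tilde p}_{ang}(r^\alpha)}=\omega_n^{1/\tilde p-1/p}\|g\|_{L^p(|x|^\alpha)}$ for radial $g$ and $\|\tilde F_i\|_{L^{p_i}(|x|^{\alpha_ip_i/p})}=\omega_n^{1/p_i}\|f_i\|_{L^{p_i}_{rad}L^{\tilde p_i}_{ang}(r^{\alpha_ip_i/p})}$, transfers the upper bound with the correct power $\omega_n^{1/\tilde p-\sum 1/\tilde p_i}$ (using $\sum 1/p_i=1/p$), and since every step is an equality on radial tuples, the radial extremizing sequences of Theorem C immediately give sharpness. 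Your approach is shorter, treats all $m$ and the weighted case uniformly, and isolates the only genuinely new content (the bookkeeping of $\omega_n$-powers); its only dependency beyond the paper's is the observation -- true, and verifiable either from the explicit extremizers of \cite{FLLZ} or from the paper's own spherical-averaging reduction -- that the operator norm in Theorem C is attained in the limit along radial tuples.
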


\begin{theorem}\label{th3.2}
Let $m\in\mathbb N$,  $f_i\in {\mathcal{B}L^{p_i,\lambda_i}_{\rm rad}L^{\tilde{p}}_{\rm ang}(\mathbb R^n)}$, $1<p_i, \tilde{p_i}<\infty$, $1<p<\infty$, $\frac{1}{p}=\frac{1}{p_1}+\cdots+\frac{1}{p_m}$, $-1/p_i\leq\lambda_i<0$, $i=1,\ldots,m$, and $\lambda=\lambda_1+\cdots+\lambda_m$. Then $m$-linear Hardy operator $\mathcal{H}^{m}$ maps ${\mathcal{B}L^{p_1,\lambda_1}_{\rm rad}L^{\tilde{p}_1}_{\rm ang}(\mathbb R^n)}\times\cdots\times {\mathcal{B}L^{p_m,\lambda_m}_{\rm rad}L^{\tilde{p}_m}_{\rm ang}(\mathbb R^n)}$ to  ${\mathcal{B}L^{p,\lambda}_{\rm rad}L^{\tilde{p}}_{\rm ang}(\mathbb R^n)}$ with norm equal to the constant
$${{\omega}_n}^{\frac{1}{\tilde{p}}-\sum\limits_{i=1}^{m}\frac{1}{{\tilde{p}}_{i}}}\frac{\omega_n^m}{\omega_{mn}}\frac{m}{\lambda+m}\frac{1}{2^m-1}\frac{\prod_{i=1}^{m}\Gamma(\frac{n}{2}(1+\lambda_i))}{\Gamma(\frac{n}{2}(m+\lambda))}.$$
\end{theorem}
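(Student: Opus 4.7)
The plan is to adapt the argument used for Theorem F from \cite{FLLZ} to the mixed radial–angular setting, exploiting the crucial observation that $\mathcal{H}^m(f_1,\ldots,f_m)$ is a radial function of $x$ (since the integration domain $\{|(y_1,\ldots,y_m)|<|x|\}$ depends only on $|x|$). This yields immediately
\[
\|\mathcal{H}^m(f_1,\ldots,f_m)(r\cdot)\|_{L^{\tilde{p}}(S^{n-1})}=\omega_n^{1/\tilde{p}}\,\mathcal{H}^m(f_1,\ldots,f_m)(r\theta_0)
\]
for any $\theta_0\in S^{n-1}$, so the ${\mathcal{B}L^{p,\lambda}_{\rm rad}L^{\tilde{p}}_{\rm ang}}$-norm of the image collapses to a one-dimensional $\lambda$-central Morrey-type norm of its radial profile, with the clean angular factor $\omega_n^{1/\tilde{p}}$ pulled out explicitly.

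To estimate that radial profile, I would write $y_i=s_i\theta_i$ with $s_i=|y_i|$ in the defining integral and apply H\"older's inequality on each angular factor, $\int_{S^{n-1}}|f_i(s\theta_i)|d\sigma(\theta_i)\le\omega_n^{1-1/\tilde{p}_i}g_i(s)$, where $g_i(s):=\|f_i(s\cdot)\|_{L^{\tilde{p}_i}(S^{n-1})}$, reducing matters to bounding
\[
\frac{1}{\nu_{mn}|x|^{mn}}\int_{\sum s_i^2<|x|^2,\,s_i>0}\prod_{i=1}^m s_i^{n-1}g_i(s_i)\,ds_1\cdots ds_m.
\]
The hypothesis $f_i\in{\mathcal{B}L^{p_i,\lambda_i}_{\rm rad}L^{\tilde{p}_i}_{\rm ang}}$ is equivalent to the weighted radial Morrey bound $\int_0^r g_i(s)^{p_i}s^{n-1}ds\le\nu_n r^{n(1+\lambda_i p_i)}\|f_i\|_{\mathcal{B}L^{p_i,\lambda_i}_{\rm rad}L^{\tilde{p}_i}_{\rm ang}}^{p_i}$. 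Rescaling $s_i=|x|u_i$ and applying polar coordinates in $\mathbb{R}^{mn}$, together with the Dirichlet-type integral $\int_{\sum u_i^2<1,\,u_i>0}\prod u_i^{n(1+\lambda_i)-1}du$, then produces the stated constant; the mixed-norm setting contributes only the prefactor $\omega_n^{1/\tilde{p}-\sum_i 1/\tilde{p}_i}$ on top of the classical structural constant $\frac{\omega_n^m}{\omega_{mn}}\cdot\frac{m}{m+\lambda}\cdot\frac{1}{2^m-1}\cdot\frac{\prod\Gamma(n(1+\lambda_i)/2)}{\Gamma(n(m+\lambda)/2)}$ coming from the radial-only computation.

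For the matching lower bound I would test against the radial tuple $f_i(y)=|y|^{n\lambda_i}$. Because each $f_i$ is radial and homogeneous, every H\"older inequality in the upper bound becomes an equality and the Morrey supremum in $r$ is attained uniformly, so both $\|f_i\|_{\mathcal{B}L^{p_i,\lambda_i}_{\rm rad}L^{\tilde{p}_i}_{\rm ang}}$ and $\mathcal{H}^m(f_1,\ldots,f_m)(x)$ can be computed explicitly; one then checks directly that the ratio $\|\mathcal{H}^m(f_1,\ldots,f_m)\|_{\mathcal{B}L^{p,\lambda}_{\rm rad}L^{\tilde{p}}_{\rm ang}}/\prod\|f_i\|_{\mathcal{B}L^{p_i,\lambda_i}_{\rm rad}L^{\tilde{p}_i}_{\rm ang}}$ agrees with the constant produced in the upper bound, yielding sharpness.

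The main obstacle is the careful Gamma-function bookkeeping in the radial step: combining the polar decomposition in $\mathbb{R}^{mn}$, the Dirichlet integral in the $m$-dimensional positive ball, and the identity $\omega_{mn}=mn\,\nu_{mn}$ in just the right way to extract the precise stated constant—a calculation that parallels exactly the classical one in \cite{FLLZ}, with the mixed radial-angular aspect acting only as a bookkeeping overlay once the radial-extremal structure of $f_i(y)=|y|^{n\lambda_i}$ is exploited.
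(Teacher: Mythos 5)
Your proposal follows essentially the same route as the paper's proof: reduction to radial profiles (your angular H\"older step plays the same role as the paper's spherical averaging $g_f^i$ and costs the same factor $\omega_n^{1/\tilde p-\sum_i 1/\tilde p_i}$), then Minkowski's integral inequality followed by H\"older in the radial variable with exponents $p_i/p$, a rescaling to extract the $\lambda$-central Morrey norms, the Dirichlet/Beta evaluation of $\int_{|(u_1,\ldots,u_m)|<1}\prod_i|u_i|^{n\lambda_i}\,du$, and the test tuple $f_i(y)=|y|^{n\lambda_i}$ for the lower bound --- exactly the scheme the paper carries out for $m=1,2$.

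There is, however, one concrete step in your sharpness argument that does not hold as written, and it is the very step the paper also asserts without verification. You claim that for $f_i(y)=|y|^{n\lambda_i}$ ``every H\"older inequality in the upper bound becomes an equality.'' The Minkowski step and the angular H\"older step are indeed equalities for these functions, but the radial H\"older inequality
\begin{equation*}
\int_0^r\prod_{i=1}^m|f_i(\rho u_i)|^{p}\rho^{n-1}\,d\rho\;\le\;\prod_{i=1}^m\Big(\int_0^r|f_i(\rho u_i)|^{p_i}\rho^{n-1}\,d\rho\Big)^{p/p_i}
\end{equation*}
is an equality only when the functions $|f_i(\rho u_i)|^{p_i}=(\rho|u_i|)^{n\lambda_ip_i}$ are pairwise proportional in $\rho$, i.e.\ only when $\lambda_1p_1=\cdots=\lambda_mp_m$. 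For general admissible parameters a direct computation gives $\||x|^{n\lambda}\|_{\mathcal{B}L^{p,\lambda}_{\rm rad}L^{\tilde p}_{\rm ang}}=\omega_n^{1/\tilde p-1/p}(1+\lambda p)^{-1/p}$, so the test tuple produces the ratio $\prod_i(1+\lambda_ip_i)^{1/p_i}/(1+\lambda p)^{1/p}$ times the upper-bound constant; writing $1+\lambda p=\sum_i\frac{p}{p_i}(1+\lambda_ip_i)$ and applying Jensen's inequality to $\log$, this factor is strictly less than $1$ unless all $\lambda_ip_i$ coincide. Hence ``checking directly that the ratio agrees'' fails for $m\ge2$ with unequal $\lambda_ip_i$, and one needs either a different extremizing family or a limiting argument to close the lower bound. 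Be aware that the paper's own treatment of the $m=2$ case asserts the same equality in its final ``it is not hard to check'' display, so your proposal reproduces the paper's argument, gap included; but as a self-contained proof the sharpness step is incomplete.
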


\begin{theorem}\label{th3.3}  Let $1<p, \tilde{p}<\infty$ and $-1/p<\lambda<{1/n}$. Then
$$\|\mathcal{H}f\|_{{CMOL^{p,\lambda}_{\rm rad}L^{\tilde{p}}_{\rm ang}(\mathbb R^n)}}\leq\frac{1}{1+\lambda}\|f\|_{{CMOL^{p,\lambda}_{\rm rad}L^{\tilde{p}}_{\rm ang}(\mathbb R^n)}}.$$
Moreover,
$$\|\mathcal{H}\|_{{CMOL^{p,\lambda}_{\rm rad}L^{\tilde{p}}_{\rm ang}(\mathbb R^n)}\rightarrow {CMOL^{p,\lambda}_{\rm rad}L^{\tilde{p}}_{\rm ang}(\mathbb R^n)}}=\frac{1}{1+\lambda}.$$
Furthermore,
$$\|\mathcal{H}\|_{{CMOL^{p}_{\rm rad}L^{\tilde{p}}_{\rm ang}(\mathbb R^n)}\rightarrow {CMOL^{p}_{\rm rad}L^{\tilde{p}}_{\rm ang}(\mathbb R^n)}}=1.$$
\end{theorem}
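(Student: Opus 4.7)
The plan is to prove the upper bound by exploiting the radiality of $\mathcal{H}f$ together with a key algebraic identity for the centering constant, and then to verify sharpness with explicit extremizers.

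Since $\mathcal{H}f(x)=\phi(|x|)$ with $\phi(\rho)=\frac{n}{\rho^n}\int_0^\rho\bar f(u)u^{n-1}du$ and $\bar f(u)=\omega_n^{-1}\int_{S^{n-1}}f(u\theta)d\sigma(\theta)$, radiality reduces the angular norm to $\|\mathcal{H}f(\rho\cdot)-c\|_{L^{\tilde p}(S^{n-1})}=\omega_n^{1/\tilde p}|\phi(\rho)-c|$ for any constant $c$. The substitution $u=\rho s$ gives $\phi(\rho)=n\int_0^1 s^{n-1}\bar f(\rho s)\,ds$; a Fubini computation together with $\phi(rs)=f_{B_{rs}}$ then shows $(\mathcal{H}f)_{B_r}=n\int_0^1 s^{n-1}f_{B_{rs}}\,ds$. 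Subtracting yields the key identity
$$\phi(\rho)-(\mathcal{H}f)_{B_r}=n\int_0^1 s^{n-1}\bigl(\bar f(\rho s)-f_{B_{rs}}\bigr)\,ds,$$
in which the centering $f_{B_{rs}}$ is matched to the radius $rs$ rather than the fixed outer radius $r$.

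From here I would apply H\"older on $S^{n-1}$ to get $|\bar f(\rho s)-f_{B_{rs}}|\le \omega_n^{-1/\tilde p}\|f(\rho s\cdot)-f_{B_{rs}}\|_{L^{\tilde p}}$, take the weighted $L^p$ norm in $\rho$ on $(0,r)$, apply Minkowski's integral inequality to pull $\int_0^1\,ds$ outside, and substitute $u=\rho s$, obtaining
$$\Bigl(\int_0^r\|\mathcal{H}f(\rho\cdot)-(\mathcal{H}f)_{B_r}\|_{L^{\tilde p}}^p\rho^{n-1}d\rho\Bigr)^{1/p}\le n\int_0^1 s^{n-1-n/p}\Bigl(\int_0^{rs}\|f(u\cdot)-f_{B_{rs}}\|_{L^{\tilde p}}^p u^{n-1}du\Bigr)^{1/p}ds.$$
Definition \ref{de2.4} applied at radius $rs$ bounds the inner integral by $\nu_n^{1/p}(rs)^{n/p+n\lambda}\|f\|_{CMOL^{p,\lambda}_{\rm rad}L^{\tilde p}_{\rm ang}}$, and the remaining integral $\int_0^1 s^{n-1+n\lambda}\,ds=\frac{1}{n(1+\lambda)}$ (which converges since $\lambda>-1/p>-1$) produces precisely the constant $\frac{1}{1+\lambda}$. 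Dividing by $(\nu_n r^{n+n\lambda p})^{1/p}$ and taking $\sup_r$ completes the upper bound.

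For sharpness the classical extremizers suffice. For $\lambda\in(-1/p,1/n)\setminus\{0\}$ I would take $f(x)=|x|^{n\lambda}$, for which direct computation gives $\mathcal{H}f=\frac{1}{1+\lambda}f$ pointwise, forcing equality in the operator norm (one checks that $\|f\|_{CMOL^{p,\lambda}_{\rm rad}L^{\tilde p}_{\rm ang}}$ is finite and nonzero in the stated range). For the unweighted statement ($\lambda=0$) I would test $f(x)=\log|x|$: a direct integration yields $\mathcal{H}f(x)=\log|x|-1/n$, a constant shift which cancels against $(\mathcal{H}f)_B-f_B$, so $\|\mathcal{H}f\|_{CMO}=\|f\|_{CMO}$ and the norm equals $1$. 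The chief obstacle I foresee is that a naive Minkowski estimate with a fixed centering such as $c=f_{B_r}$ produces an unwanted bridge term $|f_{B_{rs}}-f_{B_r}|$ that spoils the sharp constant; the identity above circumvents this by placing the correct radius-$rs$ centering inside the Minkowski integrand, where it remains constant in $\rho$ while varying with $s$.
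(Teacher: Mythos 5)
Your upper-bound argument is, modulo a passage to polar coordinates, the same as the paper's: the paper writes $\mathcal{H}(f)_{B(0,R)}=\frac{1}{\nu_n}\int_{B(0,1)}f_{B(0,|t|R)}\,dt$, subtracts this from $\mathcal{H}f(x)=\frac{1}{\nu_n}\int_{B(0,1)}f(|t|x)\,dt$, applies Minkowski's inequality twice (once on $S^{n-1}$, once to pull $\int_{B(0,1)}dt$ out of the radial $L^p$ norm), rescales $\rho\mapsto|t|\rho$, and integrates $\frac{1}{\nu_n}\int_{B(0,1)}|t|^{n\lambda}\,dt=\frac{1}{1+\lambda}$; your identity $\phi(\rho)-(\mathcal{H}f)_{B_r}=n\int_0^1 s^{n-1}\bigl(\bar f(\rho s)-f_{B_{rs}}\bigr)\,ds$ is exactly this in radial form, with H\"older on the sphere playing the role of the first Minkowski step (the factors $\omega_n^{1/\tilde p}$ and $\omega_n^{-1/\tilde p}$ cancel as you note), and you are right that matching the centering constant to the radius $rs$ is the crux. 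Where you genuinely depart from the paper is in the extremizers, and your choice is the more defensible one. The paper tests with $f_0$ equal to $|x|^{n\lambda}$ on one half-space and $(-1)^n|x|^{n\lambda}$ on the other, asserting simultaneously that $(f_0)_{B(0,r)}=0$ and $\mathcal{H}(f_0)=\frac{1}{1+\lambda}f_0$; since $\mathcal{H}(f_0)(x)=(f_0)_{B(0,|x|)}$, these two claims are incompatible (for odd $n$ the antisymmetry forces $\mathcal{H}f_0\equiv 0$, while for even $n$ the function is just $|x|^{n\lambda}$ and $(f_0)_{B(0,r)}\neq 0$). Your test functions repair this: for $\lambda\neq 0$, $f(x)=|x|^{n\lambda}$ is an eigenfunction of $\mathcal{H}$ with eigenvalue $\frac{1}{1+\lambda}$, and since $(\mathcal{H}f)_{B_r}=\frac{1}{1+\lambda}f_{B_r}$ the centered quantity scales by exactly $\frac{1}{1+\lambda}$, with the seminorm finite (as $\lambda>-1/p$) and nonzero; for $\lambda=0$, $f(x)=\log|x|$ gives $\mathcal{H}f=f-\frac1n$, the constant is absorbed by the centering, and the ratio is $1$. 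Together these cover all cases of the statement, so your proposal is correct and in fact tightens the paper's sharpness argument.
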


\begin{proof}[Proof of Theorem \ref{th3.1}.]  In fact, we only need to provide the proof with the case $1\leq m\leq2$, and the similar procedure leads to the general case $m>2$. We follow some strategies in \cite{FLLZ}.

{\bf Case 1: Weighted case when $m=1$}

Set
$$g_{f}(x)=\frac{1}{{\omega}_{n}}\int_{{S}^{n-1}_{\xi}}f(|x|{\xi})d\sigma({\xi}), \quad x\in\mathbb R^{n},$$
where ${\omega}_{n}=2\pi^{n/2}/\Gamma(n/2)$.

It is not hard to see that the norm of the operator $\mathcal{H}_1$ is equal to the norm of the operator $\mathcal{H}_1$ restricts to the set of nonnegative radial functions. Therefore, it suffices to show that the case of the function $f$ is a nonnegative radial function.
\begin{align*}
&\|\mathcal{H}^1(f)\|_{L^{p}_{rad}L^{\tilde{p}_1}_{ang}(\mathbb R^{n}, r^{{\alpha}})}\\
&=\Big(\int_{0}^{\infty}\Big(\int_{{S}^{n-1}}\big|\mathcal{H}^1(f)(r\theta)\big|^{\tilde{p}}d\theta\Big)^{{p}/{\tilde{p}}}r^{n-1+\alpha}dr\Big)^{1/p}\\
&=\Big(\int_{0}^{\infty}\Big(\int_{{S}^{n-1}}\big|\frac{1}{\nu_nr^n}\int_{B(0,r)}f(y)dy\big|^{\tilde{p}}d\theta\Big)^{{p}/{\tilde{p}}}r^{n-1+\alpha}dr\Big)^{1/p}\\
&=\frac{{\omega_n}^{1/\tilde{p}}}{\nu_n}\Big(\int_{0}^{\infty}\big|\int_{B(0,1)}f(r|y|)dy\big|^{p}r^{n-1+\alpha}dr\Big)^{1/p}dy\\
&\leq\frac{{\omega_n}^{1/\tilde{p}}}{\nu_n}\int_{B(0,1)}\Big(\int_{0}^{\infty}|f(r|y|)|^{p}r^{n-1+\alpha}dr\Big)^{1/p}dy\\
&=\frac{{\omega_n}^{1/\tilde{p}-1/\tilde{p}_1}}{\nu_n}\|f\|_{L^{p}_{rad}L^{\tilde{p}_1}_{ang}(\mathbb R^{n}, r^{{\alpha}})}\int_{B(0,1)}|y|^{\frac{-n-\alpha}{p}}dy\\
&={\omega_n}^{1/\tilde{p}-1/\tilde{p}_1}\frac{\omega_n}{\nu_n}\frac{p}{pn-n-\alpha}\|f\|_{L^{p}_{rad}L^{\tilde{p}_1}_{ang}(\mathbb R^{n}, r^{{\alpha}})}\\
&={\omega_n}^{1/\tilde{p}-1/\tilde{p}_1}\frac{pn}{pn-n-\alpha}\|f\|_{L^{p}_{rad}L^{\tilde{p}_1}_{ang}(\mathbb R^{n}, r^{{\alpha}})}.
\end{align*}
This implies that
 $$\|\mathcal{H}^1\|_{L^{p}_{rad}L^{\tilde{p}_1}_{ang}(\mathbb R^{n}, r^{{\alpha}})\rightarrow L^{p}_{rad}L^{\tilde{p}}_{ang}(\mathbb R^{n}, r^{{\alpha}})}\leq{\omega_n}^{1/\tilde{p}-1/\tilde{p}_1}\frac{pn}{pn-n-\alpha}.$$

Conversely, for $0<\varepsilon<\min\{1,\frac{n}{p'}-\frac{\alpha}{p}\}$, we take $$f_{\varepsilon}(x)=|x|^{-\frac{n+\alpha}{p}-\varepsilon}\chi_{\{|x|>1\}}(x).$$
Then, we have
$$\|f_{\varepsilon}\|_{L^{p}_{rad}L^{\tilde{p}_1}_{ang}(\mathbb R^{n}, r^{\alpha})}=\frac{{{\omega}_{n}}^{1/\tilde{p}_1}}{({p\varepsilon})^{1/p}}.$$
It follows that
 \begin{align*}
 \mathcal{H}^1(f_\varepsilon)(x)=\frac{1}{\nu_n|x|^{-\frac{n+\alpha}{p}-\varepsilon}}\int_{1/|x|<|y|<1}|y|^{-\frac{n+\alpha}{p}-\varepsilon}dy\chi_{\{|x|>1\}}(x).
 \end{align*}
Therefore, a simple calculation deduces that

\begin{align*}
&\|\mathcal{H}^1(f_\varepsilon)\|_{L^{p}_{rad}L^{\tilde{p}}_{ang}(\mathbb R^{n}, r^{{\alpha}})}\\
&=\Big(\int_{1}^{\infty}\Big(\int_{{S}^{n-1}}\big|\mathcal{H}^1(f_\varepsilon)(r\theta)\big|^{\tilde{p}}d\theta\Big)^{{p}/{\tilde{p}}}r^{n-1+\alpha}dr\Big)^{1/p}\\
&=\Big(\int_{1}^{\infty}\Big(\int_{{S}^{n-1}}\big|\frac{r^{-\frac{n+\alpha}{p}-\varepsilon}}{\nu_n}\int_{1/r<|y|<1}|y|^{-\frac{n+\alpha}{p}-\varepsilon}dy\big|^{\tilde{p}}d\theta\Big)^{{p}/{\tilde{p}}}r^{n-1+\alpha}dr\Big)^{1/p}\\
&\geq\frac{{\omega_n}^{1/\tilde{p}}}{\nu_n}\Big(\int_{1/\varepsilon}^{\infty}\Big(\int_{{S}^{n-1}}\big|\frac{r^{-\frac{n+\alpha}{p}-\varepsilon}}{\nu_n}\int_{\varepsilon<|y|<1}|y|^{-\frac{n+\alpha}{p}-\varepsilon}dy\big|^{\tilde{p}}d\theta\Big)^{{p}/{\tilde{p}}}r^{n-1+\alpha}dr\Big)^{1/p}\\
&=\frac{{\omega_n}^{1/\tilde{p}+1}}{\nu_n}\Big(\int_{1/\varepsilon}^{\infty}\Big(\int_{\varepsilon<|y|<1}|y|^{-\frac{n+\alpha}{p}-\varepsilon}dy\Big)^{p}r^{-\varepsilon p-1}dr\Big)^{1/p}\\
&=\frac{{\omega_n}^{1/\tilde{p}+1}}{\nu_n}\frac{1-\varepsilon^{n-\frac{n}{p}-\frac{\alpha}{p}-\varepsilon}}{n-\frac{n}{p}-\frac{\alpha}{p}-\varepsilon}\frac{\varepsilon^{\varepsilon}}{({p\varepsilon})^{1/p}}\\
&={\omega_n}^{1/\tilde{p}-1/\tilde{p}_1}\frac{{\omega_n}}{\nu_n}\frac{1-\varepsilon^{n-\frac{n}{p}-\frac{\alpha}{p}-\varepsilon}}{n-\frac{n}{p}-\frac{\alpha}{p}-\varepsilon}{\varepsilon^{\varepsilon}}\|f_{\varepsilon}\|_{L^{p}_{rad}L^{\tilde{p}}_{ang}(\mathbb R^{n}, r^{\alpha})}.
\end{align*}
 Consequently,
 $$\|\mathcal{H}^1\|_{L^{p}_{rad}L^{\tilde{p}}_{ang}(\mathbb R^{n}, r^{{\alpha}})\rightarrow L^{p}_{rad}L^{\tilde{p}}_{ang}(\mathbb R^{n}, r^{{\alpha}})}\geq{\omega_n}^{1/\tilde{p}-1/\tilde{p}_1}\frac{{\omega_n}}{\nu_n}\frac{1-\varepsilon^{n-\frac{n}{p}-\frac{\alpha}{p}-\varepsilon}}{n-\frac{n}{p}-\frac{\alpha}{p}-\varepsilon}{\varepsilon^{\varepsilon}},$$
 and letting  $\varepsilon\rightarrow0$, we deduce
 $$\|\mathcal{H}^1\|_{L^{p}_{rad}L^{\tilde{p}}_{ang}(\mathbb R^{n}, r^{{\alpha}})\rightarrow L^{p}_{rad}L^{\tilde{p}}_{ang}(\mathbb R^{n}, r^{{\alpha}})}\geq{\omega_n}^{1/\tilde{p}-1/\tilde{p}_1}\frac{pn}{pn-n-\alpha}.$$

{\bf Case 2: Unweighted case when $m=2$.}

$$g_{f}^i(x_i)=\frac{1}{{\omega}_{n}}\int_{{S}^{n-1}_{\xi_i}}f(|x_i|{\xi}_i)d\sigma({\xi}_i), \quad x_i\in\mathbb R^{n}, i=1,2,$$
where as before ${\omega}_{n}=2\pi^{n/2}/\Gamma(n/2)$. It follows from that $\mathcal{H}^2(g_{f}^1,g_{f}^2)(x)$ is equal to
\begin{align*}
\mathcal{H}^2(g_{f}^1,g_{f}^2)(x)=\mathcal{H}^2(f_1,f_2)(x).
\end{align*}
The Minkowski inequality  gives rise to the following

\begin{align*}
\|g_{f}^i\|_{L^{p}_{rad}L^{\tilde{p}}_{ang}(\mathbb R^{n})}\leq\|f^i\|_{L^{p}_{rad}L^{\tilde{p}}_{ang}(\mathbb R^{n})}.
\end{align*}
Therefore,
\begin{align*}
\frac{\|\mathcal{H}^2(f_1,f_2)\|_{L^{p}_{rad}L^{\tilde{p}}_{ang}(\mathbb R^{n})}}{\|f_1\|_{L^{p_1}_{rad}L^{\tilde{p_1}}_{ang}(\mathbb R^{n})}\|f_2\|_{L^{p_2}_{rad}L^{\tilde{p_2}}_{ang}(\mathbb R^{n})}}\leq\frac{\|\mathcal{H}^2(g_{f}^1,g_{f}^2)\|_{L^{p}_{rad}L^{\tilde{p}}_{ang}(\mathbb R^{n} )}}{\|g_{f}^1\|_{L^{p_1}_{rad}L^{\tilde{p_1}}_{ang}(\mathbb R^{n})}\|g_{f}^2\|_{L^{p_2}_{rad}L^{\tilde{p_2}}_{ang}(\mathbb R^{n})}}.
\end{align*}
This implies that the norm of the operator of $\mathcal{H}^2$ is equal to the norm of the operator $\mathcal{H}^2$ restricts to the set of nonnegative radial functions. Therefore, it suffices to show that the case of the function $f_i(i=1,2)$ is a nonnegative radial function.

By Minkowski's  inequality  and H\"{o}lder's inequality, we have
\begin{align*}
&\|\mathcal{H}^2(f_1,f_2)\|_{L^{p}_{rad}L^{\tilde{p}}_{ang}(\mathbb R^{n})}\\
&=\Bigg(\int_{0}^{\infty}\Big(\int_{{S}^{n-1}_{x}}|\mathcal{H}^2(f_1,f_2)(r\theta)|^{\tilde{p}}d\sigma({\theta})\Big)^{p/\tilde{p}} r^{n-1}dr\Bigg)^{1/p}\\
&=\Bigg(\int_{0}^{\infty}\Big(\int_{{S}^{n-1}_{x}}|\frac{1}{\nu_{2n}r^{2n}}\int_{|(y_1,y_2)|<r}f_1(y_1)f_2(y_2)dy_1dy_2|^{\tilde{p}}d\sigma({\theta})\Big)^{p/{\tilde{p}}} r^{n-1}dr\Bigg)^{1/p}\\
&=\Bigg(\int_{0}^{\infty}\Big(\int_{{S}^{n-1}_{x}}|\frac{1}{\nu_{2n}}\int_{|(z_1,z_2)|<1}f_1(r|z_1|)f_2(r|z_2|)dz_1dz_2|^{\tilde{p}}d\sigma({\theta})\Big)^{p/{\tilde{p}}} r^{n-1}dr\Bigg)^{1/p}\\
&=\frac{{\omega_n}^{1/{\tilde{p}}}}{\nu_{2n}}\Big(\int_{0}^{\infty}|\int_{|(z_1,z_2)|<1}f_1(rz_1)f_2(r|z_2|)dz_1dz_2|^{p} r^{n-1}dr\Big)^{1/p}\\
&\leq\frac{{\omega_n}^{1/{\tilde{p}}}}{\nu_{2n}}\int_{|(z_1,z_2)|<1}\Big(\int_{0}^{\infty}|f_1(r|z_1|)f_2(r|z_2|)|^p r^{n-1}dr\Big)^{1/p}dz_1dz_2\\
&=\frac{{\omega_n}^{1/{\tilde{p}}}}{\nu_{2n}}\int_{|(z_1,z_2)|<1}\prod\limits_{i=1}^{2}\Big(\int_{0}^{\infty}|f_i(r)|^{p_i} r^{n-1}dr\Big)^{1/p_i}|z_1|^{-\frac{n}{p_1}}|z_2|^{-\frac{n}{p_2}}dz_1dz_2\\
&={\omega_n}^{\frac{1}{\tilde{p}}-\sum\limits_{i=1}^{2}\frac{1}{{\tilde{p}}_{i}}}\frac{1}{\nu_{2n}}\int_{|(z_1,z_2)|<1}|z_1|^{-\frac{n}{p_1}}|z_2|^{-\frac{n}{p_2}}dz_1dz_2\|f_1\|_{L^{p_1}_{rad}L^{\tilde{p_1}}_{ang}(\mathbb R^{n})}\|f_2\|_{L^{p_2}_{rad}L^{\tilde{p_2}}_{ang}(\mathbb R^{n})}\\
&={\omega_n}^{\frac{1}{\tilde{p}}-\sum\limits_{i=1}^{2}\frac{1}{{\tilde{p}}_{i}}}\frac{1}{\nu_{2n}}\frac{\omega_n^2}{2n}\frac{p}{2p-1}B(\frac{n}{2}-\frac{n}{2p_1},\frac{n}{2}-\frac{n}{2p_2})\|f_1\|_{L^{p_1}_{rad}L^{\tilde{p_1}}_{ang}(\mathbb R^{n})}\|f_2\|_{L^{p_2}_{rad}L^{\tilde{p_2}}_{ang}(\mathbb R^{n})}\\
&={\omega_n}^{\frac{1}{\tilde{p}}-\sum\limits_{i=1}^{2}\frac{1}{{\tilde{p}}_{i}}}\frac{\omega_n^2}{\omega_{2n}}\frac{p}{2p-1}\frac{\prod\limits_{i=1}^{2}\Gamma(\frac{n}{2}(1-\frac{1}{p_i}))}{\Gamma(\frac{n}{2}(2-\frac{1}{p}))}\|f_1\|_{L^{p}_{rad}L^{\tilde{p_1}}_{ang}(\mathbb R^{n})}\|f_2\|_{L^{p}_{rad}L^{\tilde{p_2}}_{ang}(\mathbb R^{n})}.
\end{align*}
This means that
 $$\|\mathcal{H}^2\|_{L^{p_1}_{rad}L^{\tilde{p_1}}_{ang}(\mathbb R^{n})\times L^{p_2}_{rad}L^{\tilde{p_2}}_{ang}(\mathbb R^{n})\rightarrow L^{p}_{rad}L^{\tilde{p}}_{ang}(\mathbb R^{n})}\leq{\omega_n}^{\frac{1}{\tilde{p}}-\sum\limits_{i=1}^{2}\frac{1}{{\tilde{p}}_{i}}}\frac{\omega_n^2}{\omega_{2n}}\frac{p}{2p-1}\frac{\prod\limits_{i=1}^{2}\Gamma(\frac{n}{2}(1-\frac{1}{p_i}))}{\Gamma(\frac{n}{2}(2-\frac{1}{p}))}.$$

Now, for $0<\varepsilon<\min\{1,\frac{(p_1-1)n}{p_2},\frac{n}{p_2'}\}$, we define
$$f_{\varepsilon}^1(x_1)=|x_1|^{-\frac{n}{p_1}-\frac{p_2\varepsilon}{p_1}}\chi_{\{|x_1|>\frac{\sqrt{2}}{2}\}}(x_1),\quad f_{\varepsilon}^2(x_2)=|x_2|^{-\frac{n}{p_2}-\varepsilon}\chi_{\{|x_2|>\frac{\sqrt{2}}{2}\}}(x_2).$$
By an elementary calculation, we obtain that
$$\|f_{\varepsilon}^1\|_{L^{p_1}_{rad}L^{\tilde{p_1}}_{ang}(\mathbb R^{n})}=\frac{{{\omega}_{n}}^{1/\tilde{p_1}}}{({p_2\varepsilon})^{1/p_1}}(\sqrt{2})^{\frac{p_2\varepsilon}{p_1}},\quad \|f_{\varepsilon}\|_{L^{p_2}_{rad}L^{\tilde{p_2}}_{ang}(\mathbb R^{n})}=\frac{{{\omega}_{n}}^{1/\tilde{p_2}}}{({p_2\varepsilon})^{1/p_2}}(\sqrt{2})^{\varepsilon}.$$
Therefore,
\begin{align*}
&\mathcal{H}^2(f_{\varepsilon}^1,f_{\varepsilon}^2)(x)\\
&=\frac{1}{\nu_{2n}}|x|^{-\frac{n+p_2\varepsilon}{p}}\int_{\{|(y_1,y_2)|<1;|y_1|>\frac{\sqrt{2}}{2|x|},|y_2|>\frac{\sqrt{2}}{2|x|}\}}|y_1|^{-\frac{n+p_2\varepsilon}{p_1}}|y_2|^{-\frac{n}{p_2}-\varepsilon}dy_1dy_2\chi_{\{|x|>1\}}(x).
\end{align*}
This gives that
\begin{align*}
&\|\mathcal{H}^2(f_{\varepsilon}^1,f_{\varepsilon}^2)\|_{L^{p}_{rad}L^{\tilde{p}}_{ang}(\mathbb R^{n})}\\
&={{\omega}_{n}}^{1/\tilde{p}}\Bigg(\int_{1}^{\infty}\Big(\int_{\{|(y_1,y_2)|<1;|y_1|>\frac{\sqrt{2}}{2|x|},|y_2|>\frac{\sqrt{2}}{2|x|}\}}|y_1|^{-\frac{n+p_2\varepsilon}{p_1}}|y_2|^{-\frac{n}{p_2}-\varepsilon}dy_1dy_2\Big)^{p} r^{-p_2\varepsilon-1}dr\Bigg)^{1/p}\\
&\geq{{\omega}_{n}}^{1/\tilde{p}}\Bigg(\int_{1/\varepsilon}^{\infty}\Big(\int_{\{|(y_1,y_2)|<1;|y_1|>\frac{\sqrt{2}}{2\varepsilon},|y_2|>\frac{\sqrt{2}}{2\varepsilon}\}}|y_1|^{-\frac{n+p_2\varepsilon}{p_1}}|y_2|^{-\frac{n}{p_2}-\varepsilon}dy_1dy_2\Big)^{p} r^{-p_2\varepsilon-1}dr\Bigg)^{1/p}\\
&={\omega_n}^{\frac{1}{\tilde{p}}-\sum\limits_{i=1}^{2}\frac{1}{{\tilde{p}}_{i}}}\frac{\omega_n^2}{n-\frac{n}{p_2}-\varepsilon}\Big[\frac{1}{2}B(\frac{n}{2}-\frac{n}{2p_1}-\frac{p_2\varepsilon}{2p_1},\frac{n-\varepsilon}{2}-\frac{n}{2p_2}+1)\\
&\quad-\frac{1}{2}\int^{\frac{\varepsilon^2}{2}}_{0}(1-t)^{\frac{1}{2}(n-\frac{n}{p_2}-\varepsilon)}t^{\frac{1}{2}(n-\frac{n}{p_1}-\frac{p_2\varepsilon}{p_1})-1}dt\\
&\qquad-\big(\frac{\sqrt{2}\varepsilon}{2}\big)^{n-\frac{n}{p_2}}\frac{(\sqrt{2})^\varepsilon}{{\varepsilon^{\varepsilon}}}\frac{1}{n-\frac{n+p_2\varepsilon}{p_1}}\Big(1-\big(\frac{\sqrt{2\varepsilon}}{2}\big)^{n-\frac{n}{p_1}}\big(\frac{\sqrt{2}}{2}\big)^{-\frac{p_2\varepsilon}{p_1}}({\varepsilon^{\varepsilon}})^{-\frac{p_2}{p_1}}\Big)\Big]\\
&\qquad\quad\times({\varepsilon}^{\varepsilon})^{\frac{p_2}{p}}\big(\frac{\sqrt{2}}{2}\big)^{\frac{p_2\varepsilon}{p}}\|f_1^\varepsilon\|_{L^{p}_{rad}L^{\tilde{p_1}}_{ang}(\mathbb R^{n})}\|f_2^\varepsilon\|_{L^{p}_{rad}L^{\tilde{p_2}}_{ang}(\mathbb R^{n})}.
\end{align*}
Let $\varepsilon\rightarrow0$, we have
$$\|\mathcal{H}^2\|_{L^{p}_{rad}L^{\tilde{p}}_{ang}(\mathbb R^{n})}\geq{\omega_n}^{\frac{1}{\tilde{p}}-\sum\limits_{i=1}^{2}\frac{1}{{\tilde{p}}_{i}}}\frac{\omega_n^2}{\omega_{2n}}\frac{p}{2p-1}\frac{\prod\limits_{i=1}^{2}\Gamma(\frac{n-\varepsilon}{2}(1-\frac{1}{p_i}))}{\Gamma(\frac{n}{2}(2-\frac{1}{p}))}.$$

For weighted case, the upper bounded in this case is similar to that of the previous case, for the lower bound, we only take
$$f_{\varepsilon}^1(x_1)=|x_1|^{-\frac{n}{p_1}-\frac{p_2\varepsilon}{p_1}-\frac{\alpha}{p_1}}\chi_{\{|x_1|>\frac{\sqrt{2}}{2}\}}(x_1),\quad f_{\varepsilon}^2(x_2)=|x_2|^{-\frac{n}{p_2}-\frac{\alpha}{p_2}-\varepsilon}\chi_{\{|x_2|>\frac{\sqrt{2}}{2}\}}(x_2).$$

This proves Theorem \ref{th3.1}.

\end{proof}

Now we turn to prove Theorem \ref{th3.2} in this  position.

\begin{proof}[Proof of Theorem \ref{th3.2}.]
We only consider the cases $m=1$ and $m=2$, the rest of cases does not involve any new ideas, thus we omit the details.

{\bf Case 1: $m=1$.} According to the proof of Theorem \ref{th3.1}, we know that the operator $\mathcal{H}^1$ and its restriction to radial functions have the same operator norm  on the spaces ${\mathcal{B}L^{p,\lambda}_{\rm rad}L^{\tilde{p}}_{\rm ang}(\mathbb R^n)}$.

Let
$$\mathcal{H}^1(f)(x)=\frac{1}{{\nu}_{n}}\int_{B(0,1)}f(t|x|)dt.$$
By Minkowski's  inequality  and H\"{o}lder's inequality, we have

\begin{align*}
&\|\mathcal{H}^1(f)\|_{{\mathcal{B}L^{p,\lambda}_{\rm rad}L^{\tilde{p}}_{\rm ang}(\mathbb R^n)}}\\
&=\sup\limits_{r>0}\Big(\frac{1}{\nu_nr^{n+n\lambda p}}\int_{0}^{r}\Big(\int_{{S}^{n-1}}|\frac{1}{{\nu}_{n}}\int_{B(0,1)}f(t|\rho\theta|)dt|^{\tilde{p}}d\sigma(\theta)\Big)^{p/\tilde{p}}\rho^{n-1}d\rho\Big)^{1/p}\\
&\leq{\omega_n}^{\frac{1}{\tilde{p}}}\sup\limits_{r>0}\Big(\frac{1}{{\nu_n}^{1+p}r^{n+n\lambda p}}\int_{0}^{r}\Big(\int_{B(0,1)}|f(t\rho)|dt\Big)^{p}\rho^{n-1}d\rho\Big)^{1/p}\\
&\leq\frac{{\omega_n}^{\frac{1}{\tilde{p}}}}{\nu_n}\int_{B(0,1)}\sup\limits_{r>0}\Big(\frac{1}{{\nu_n}r^{n+n\lambda p}}\int_{0}^{r}|f(|t|\rho)|^{p}\rho^{n-1}d\rho\Big)^{1/p}dt\\
&\leq\frac{{\omega_n}^{\frac{1}{\tilde{p}}-\frac{1}{\tilde{p}_1}}}{\nu_n}\int_{B(0,1)}|t|^{n\lambda}dt\|f\|_{{{\mathcal{B}L^{p,\lambda}_{\rm rad}L^{\tilde{p}_1}_{\rm ang}(\mathbb R^n)}}}\\
&={\omega_n}^{\frac{1}{\tilde{p}}-\frac{1}{\tilde{p}_1}}\frac{1}{1+\lambda}\|f\|_{{{\mathcal{B}L^{p,\lambda}_{\rm rad}L^{\tilde{p}_1}_{\rm ang}(\mathbb R^n)}}}.
\end{align*}
On the other hand, we take $\tilde{f}(x)=|x|^{n\lambda}$, then  $\tilde{f}\in{{\mathcal{B}L^{p,\lambda}_{\rm rad}L^{\tilde{p}_1}_{\rm ang}(\mathbb R^n)}}$, and
$$\|\mathcal{H}^1(f)\|_{{\mathcal{B}L^{p,\lambda}_{\rm rad}L^{\tilde{p}}_{\rm ang}(\mathbb R^n)}}=\frac{1}{1+\lambda}\|\tilde{f}\|_{{\mathcal{B}L^{p,\lambda}_{\rm rad}L^{\tilde{p}}_{\rm ang}(\mathbb R^n)}}={\omega_n}^{\frac{1}{\tilde{p}}-\frac{1}{\tilde{p}_1}}\frac{1}{1+\lambda}\|\tilde{f}\|_{{\mathcal{B}L^{p,\lambda}_{\rm rad}L^{\tilde{p}_1}_{\rm ang}(\mathbb R^n)}}.$$
This finishes the case of  $m=1$.

{\bf Case 1: $m=2$.}  We also note that the operator $\mathcal{H}^2$ and its restriction to radial functions have the same operator norm  on the spaces ${\mathcal{B}L^{p,\lambda}_{\rm rad}L^{\tilde{p}}_{\rm ang}(\mathbb R^n)}$. For the radial functions $f_1$ and $f_2$, we write
$$\mathcal{H}^2(f_1,f_2)(x)=\frac{1}{\nu_{2n}}\int_{|(y_1,y_2)|<1}f_1(|x|y_1)f_2(|x|y_2)dy_1dy_2.$$
The Minkowski  inequality  and H\"{o}lder inequality give rise to
\begin{align*}
&\|\mathcal{H}^1(f)\|_{{\mathcal{B}L^{p,\lambda}_{\rm rad}L^{\tilde{p}}_{\rm ang}(\mathbb R^n)}}\\
&=\frac{{\omega_n}^{\frac{1}{\tilde{p}}}}{\nu_{2n}}\sup\limits_{r>0}\Big(\frac{1}{{\nu_n}r^{n+n\lambda p}}\int_{0}^{r}\Big(\int_{|(y_1,y_2)|<1}f_1(\rho y_1)f_2(\rho y_2)dy_1dy_2\Big)^{p}\rho^{n-1}d\rho\Big)^{1/p}\\
&\leq\frac{{\omega_n}^{\frac{1}{\tilde{p}}}}{\nu_{2n}}\int_{|(y_1,y_2)|<1}\sup\limits_{r>0}\Big(\frac{1}{{\nu_n}r^{n+n\lambda p}}\int_{0}^{r}|f_1(\rho y_1)f_2(\rho y_2)|^{p}\rho^{n-1}d\rho\Big)^{1/p}dy_1dy_2\\
&\leq\frac{{\omega_n}^{\frac{1}{\tilde{p}}}}{\nu_{2n}}\int_{|(y_1,y_2)|<1}\sup\limits_{r>0}\Big(\frac{1}{{\nu_n}r^{n+n\lambda p_1}}\int_{0}^{r}|f_1(\rho y_1)|^{p_1}\rho^{n-1}d\rho\Big)^{1/p_1}\\
&\quad\times\sup\limits_{r>0}\Big(\frac{1}{{\nu_{2n}}r^{n+n\lambda p_2}}\int_{0}^{r}|f_2(\rho y_2)|^{p_2}\rho^{n-1}d\rho\Big)^{1/p_2}dy_1dy_2\\
&\leq\frac{{\omega_n}^{\frac{1}{\tilde{p}}}}{\nu_{2n}}\int_{|(y_1,y_2)|<1}\sup\limits_{r>0}\Big(\frac{1}{{\nu_n}(r|y_1|)^{n+n\lambda p_1}}\int_{0}^{r|y_1|}|f_1(\rho )|^{p_1}\rho^{n-1}d\rho\Big)^{1/p_1}\\
&\quad\times\sup\limits_{r>0}\Big(\frac{1}{{\nu_n}(r|y_2|)^{n+n\lambda p_2}}\int_{0}^{r|y_2|}|f_2(\rho)|^{p_2}\rho^{n-1}d\rho\Big)^{1/p_2}|y_1|^{n\lambda_1}|y_2|^{n\lambda_2}dy_1dy_2\\
\end{align*}

\begin{align*}
&\leq\frac{{\omega_n}^{\frac{1}{\tilde{p}}-\frac{1}{\tilde{p}_1}-\frac{1}{\tilde{p}_2}}}{\nu_{2n}}\int_{|(y_1,y_2)|<1}|y_1|^{n\lambda_1}|y_2|^{n\lambda_2}dy_1dy_2\|f_1\|_{{{\mathcal{B}L^{p,\lambda}_{\rm rad}L^{\tilde{p}_1}_{\rm ang}(\mathbb R^n)}}}\|f_2\|_{{{\mathcal{B}L^{p,\lambda}_{\rm rad}L^{\tilde{p}_2}_{\rm ang}(\mathbb R^n)}}}\\
&=\frac{{\omega_n}^{\frac{1}{\tilde{p}}-\frac{1}{\tilde{p}_1}-\frac{1}{\tilde{p}_2}}}{\nu_{2n}}\frac{\omega_n^2}{2n}\frac{1}{2+\lambda}B(\frac{n}{2}(1+\lambda_1),\frac{n}{2}(1+\lambda_2))
\|f_1\|_{{{\mathcal{B}L^{p,\lambda}_{\rm rad}L^{\tilde{p}_1}_{\rm ang}(\mathbb R^n)}}}\|f_2\|_{{{\mathcal{B}L^{p,\lambda}_{\rm rad}L^{\tilde{p}_2}_{\rm ang}(\mathbb R^n)}}}\\
&={\omega_n}^{\frac{1}{\tilde{p}}-\sum\limits_{i=1}^{2}\frac{1}{{\tilde{p}}_{i}}}\frac{\omega_n^2}{\omega_{2n}}\frac{1}{2+\lambda}\frac{\prod\limits_{i=1}^{2}\Gamma(\frac{n}{2}(1+\lambda_i))}{\Gamma(\frac{n}{2}(2+\lambda_i))}\|f_1\|_{{{\mathcal{B}L^{p,\lambda}_{\rm rad}L^{\tilde{p}_1}_{\rm ang}(\mathbb R^n)}}}\|f_2\|_{{{\mathcal{B}L^{p,\lambda}_{\rm rad}L^{\tilde{p}_2}_{\rm ang}(\mathbb R^n)}}}.
\end{align*}
On the other hand, we take $\tilde{f}_i(x)=|x|^{n\lambda_i}$, then
$$\|f_i\|_{{{\mathcal{B}L^{p,\lambda}_{\rm rad}L^{\tilde{p}_i}_{\rm ang}(\mathbb R^n)}}}=\frac{{\omega_n}^{\frac{1}{\tilde{p}_i}}}{n(1+\lambda_ip_i)}, \quad i=1,2.$$
It is not hard to check that
\begin{align*}
&\|\mathcal{H}^2(\tilde{f}_1,\tilde{f}_2)\|_{{\mathcal{B}L^{p,\lambda}_{\rm rad}L^{\tilde{p}}_{\rm ang}(\mathbb R^n)}}\\
&=\frac{1}{\nu_{2n}}\int_{|(y_1,y_2)|<1}|y_1|^{n\lambda_1}|y_2|^{n\lambda_2}dy_1dy_2\\
&={\omega_n}^{\frac{1}{\tilde{p}}-\sum\limits_{i=1}^{2}\frac{1}{{\tilde{p}}_{i}}}\frac{\omega_n^2}{\omega_{2n}}\frac{1}{2+\lambda}\frac{\prod\limits_{i=1}^{2}\Gamma(\frac{n}{2}(1+\lambda_i))}{\Gamma(\frac{n}{2}(2+\lambda_i))}\|f_1\|_{{{\mathcal{B}L^{p,\lambda}_{\rm rad}L^{\tilde{p}_1}_{\rm ang}(\mathbb R^n)}}}\|f_2\|_{{{\mathcal{B}L^{p,\lambda}_{\rm rad}L^{\tilde{p}_2}_{\rm ang}(\mathbb R^n)}}}.
\end{align*}
This completes the case of  $m=2$. Therefore, the proof of Theorem \ref{th3.2} is finished.
\end{proof}
Next, we come to prove Theorem \ref{th3.3}.

\begin{proof}[Proof of Theorem \ref{th3.3}.] Set
$$g_{f}(x)=\frac{1}{{\omega}_{n}}\int_{{S}^{n-1}_{\xi}}f(|x|{\xi})d\sigma({\xi}), \quad x\in\mathbb R^{n}.$$
Similarly, we have
\begin{align*}
\frac{\|\mathcal{H}(f)\|_{{CMOL^{p,\lambda}_{\rm rad}L^{\tilde{p}}_{\rm ang}(\mathbb R^n)}}}{\|f\|_{{CMOL^{p,\lambda}_{\rm rad}L^{\tilde{p}}_{\rm ang}(\mathbb R^n)}}}\leq\frac{\|\mathcal{H}(g_{f})\|_{{CMOL^{p,\lambda}_{\rm rad}L^{\tilde{p}}_{\rm ang}(\mathbb R^n)}}}{\|g_{f}\|_{{CMOL^{p,\lambda}_{\rm rad}L^{\tilde{p}}_{\rm ang}(\mathbb R^n)}}}.
\end{align*}
Note that
$$\mathcal{H}(f)_{B(0,R)}=\frac{1}{{\nu}_{n}}\int_{B(0,1)}f_{B(0,|t|R)}dt.$$
Then, we have
\begin{align*}
&\|\mathcal{H}(f)\|_{{CMOL^{p,\lambda}_{\rm rad}L^{\tilde{p}}_{\rm ang}(\mathbb R^n)}}\\
&=\sup\limits_{r>0}\Big(\frac{1}{\nu_nr^{n+n\lambda p}}\int_{0}^{r}\Big(\int_{{S}^{n-1}}|\mathcal{H}(f)(\rho\theta)-\mathcal{H}(f)_{B(0,r)}|^{\tilde{p}}d\sigma(\theta)\Big)^{p/\tilde{p}}\rho^{n-1}d\rho\Big)^{1/p}\\
&\leq\frac{1}{{\nu}_{n}}\sup\limits_{r>0}\Big(\frac{1}{\nu_nr^{n+n\lambda p}}\int_{0}^{r}\Big(\int_{{S}^{n-1}}|\int_{B(0,1)}\Big(f(|t|\rho\theta)-f_{B(0,|t|R)}\Big)dt|^{\tilde{p}}d\sigma(\theta)\Big)^{p/\tilde{p}}\rho^{n-1}d\rho\Big)^{1/p}\\
\end{align*}

\begin{align*}
&\leq\frac{1}{{\nu}_{n}}\sup\limits_{r>0}\Big(\frac{1}{\nu_nr^{n+n\lambda p}}\int_{0}^{r}\Big(\int_{B(0,1)}\Big(\int_{{S}^{n-1}}\big|f(|t|\rho\theta)-f_{B(0,|t|R)}\big|^{\tilde{p}}d\sigma(\theta)\Big)^{1/\tilde{p}}dt\Big)^{p}\rho^{n-1}d\rho\Big)^{1/p}\\
&\leq\frac{1}{{\nu}_{n}}\int_{B(0,1)}\sup\limits_{r>0}\Big(\frac{1}{\nu_nr^{n+n\lambda p}}\int_{0}^{r}\Big(\int_{{S}^{n-1}}\big|f(|t|\rho\theta)-f_{B(0,|t|R)}\big|^{\tilde{p}}d\sigma(\theta)\Big)^{p/\tilde{p}}\rho^{n-1}d\rho\Big)^{1/p}dt\\
&\leq\frac{1}{{\nu}_{n}}\int_{B(0,1)}\sup\limits_{r>0}\Big(\frac{1}{\nu_n(|t|r)^{n+n\lambda p}}\int_{0}^{|t|r}\Big(\int_{{S}^{n-1}}\big|f(\rho\theta)-f_{B(0,|t|R)}\big|^{\tilde{p}}d\sigma(\theta)\Big)^{p/\tilde{p}}\rho^{n-1}d\rho\Big)^{1/p}|t|^{n\lambda}dt\\
&\leq\frac{1}{{\nu}_{n}}\|f\|_{{CMOL^{p,\lambda}_{\rm rad}L^{\tilde{p}}_{\rm ang}(\mathbb R^n)}}\int_{B(0,1)}|t|^{n\lambda}dt\\
&=\frac{1}{1+\lambda}\|f\|_{{CMOL^{p,\lambda}_{\rm rad}L^{\tilde{p}}_{\rm ang}(\mathbb R^n)}}.
\end{align*}
On the other hand, we take
\begin{eqnarray*}
f_0(x)=
\begin{cases}
|x|^{n\lambda},&\mbox{}x\in\mathbb R^n_r,\\
(-1)^{n}|x|^{n\lambda},&\mbox{}x\in\mathbb R^n_l.
\end{cases}
\end{eqnarray*}
where $\mathbb R^n_r$ and  $\mathbb R^n_l$ represent  the left and right parts of $\mathbb R^n$ divided by the hyperplane $x_1$, by a simple calculation, we get
$$f_0\in{CMOL^{p,\lambda}_{\rm rad}L^{\tilde{p}}_{\rm ang}(\mathbb R^n)},\quad (f_0)_{B(0,r)}=0, \quad \mathcal{H}(f_0)=\frac{1}{1+\lambda}f_0.$$
Therefore, we have
$$\|\mathcal{H}f\|_{{CMOL^{p,\lambda}_{\rm rad}L^{\tilde{p}}_{\rm ang}(\mathbb R^n)}}\leq\frac{1}{1+\lambda}\|f\|_{{CMOL^{p,\lambda}_{\rm rad}L^{\tilde{p}}_{\rm ang}(\mathbb R^n)}}.$$
The proof of Theorem \ref{th3.3} is completed.
\end{proof}

\section{Sharp weak-type estimates for the fractional Hardy operator}

Our aim of this section is to consider the mixed radial-angular integrabilities for the fractional Hardy operator.
Recall that, for a nonnegative measurable function $f$ on $\mathbb R^{n}$, the $n$-dimensional fractional
Hardy operator $\mathrm{H}_{\beta}$ with spherical mean is defined by
$$\mathrm{H}_{\beta}(f)(x)= \frac{1}{|B(0,|x|)|^{1-\frac{\beta}{n}}}\int_{|y|\leq|x|}f(y)dy, \quad x\in\mathbb{R}^n\setminus \{0\}\;\text{and}\; 0<\beta<n.$$
Clearly, $$\mathrm{H}_{\beta}(f)(x)\leq C \mathrm{M}_{\beta}(f)(x),$$
where
$$\mathrm{M}_{\beta}(f)(x)=\sup\limits_{r>0} \frac{1}{|B(x,r)|^{1-\frac{\beta}{n}}}\int_{|y-x|\leq r}|f(y)|dy.$$
From \cite{Ste}, we know that the operator $\mathrm{M}_{\beta}$ is bounded from $L^p(\mathbb R^{n})$ to  $L^q(\mathbb R^{n})$ for $0<\beta<n$, $1<p<\frac{\beta}{n}$ and
$\frac{1}{p}-\frac{1}{q}=\frac{\beta}{n}$. Therefore, the fractional
Hardy operator $\mathrm{H}_{\beta}$ has also this property. Furthermore, Lu et al. derived the following result.

{\bf Theorem I} (\cite{LYZ}) Suppose that $0<\beta<n$, $1<p, q<\infty$ and $\frac{1}{p}-\frac{1}{q}=\frac{\beta}{n}$, then
$$\|\mathrm{H}_{\beta}(f)\|_{L^{q}(\mathbb R^{n})}\leq C \|f\|_{L^{p}(\mathbb R^{n})},$$
where,
\begin{align*}
\Big(\frac{p}{q}\Big)^{1/q}\bigg(\frac{p}{p-1}\bigg)^{{1}{q}}
\Big(\frac{q}{q-1}\Big)^{1-1/q}\Big(1-\frac{p}{q}\Big)^{1/p-1/q}\leq C\leq (\frac{p}{p-1}\big)^{\frac{p}{q}}.
\end{align*}

In \cite{LT}, the authors obtained the following the mixed radial-angular estimate for the operator $\mathrm{H}_{\beta}$.

{\bf Theorem J}
Suppose that $0<\beta<n$, $1<p_1, q_1, p_2, q_2<\infty$ and $\frac{1}{p_1}-\frac{1}{p_2}=\frac{\beta}{n}$, then
$$\|\mathrm{H}_{\beta}(f)\|_{L^{p_2}_{rad}L^{q_2}_{ang}(\mathbb R^{n})}\leq C \|f\|_{L^{p_1}_{rad}L^{q_1}_{ang}(\mathbb R^{n})},$$
Moreover, the constant $C$ satisfies the following inequality
\begin{align*}
&{{\omega}_{n}}^{1/q_2-1/q_1+\beta/n}\Big(\frac{p_1}{p_2}\Big)^{1/p_2}\bigg(\frac{p_1}{p_1-1}\bigg)^{{1}/{p_2}}
\Big(\frac{p_2}{p_2-1}\Big)^{1-1/p_2}\Big(1-\frac{p_1}{p_2}\Big)^{1/p_1-1/p_2}\leq C\\ &\leq {{\omega}_{n}}^{1/q_2-1/q_1+\beta/n}(\frac{p_1}{p_1-1}\big)^{\frac{p_1}{p_2}}.
\end{align*}

In addition, Zhao et al. in \cite{ZL} obtained the best bound for the operator $\mathrm{H}_{\beta}$.

{\bf Theorem K} (\cite{LYZ}) Suppose that $0<\beta<n$, $1<p, q<\infty$ and $\frac{1}{p}-\frac{1}{q}=\frac{\beta}{n}$, then we have
$$\|\mathrm{H}_{\beta}\|_{L^{p}(\mathbb R^{n})\rightarrow{L^{q}(\mathbb R^{n})}}=\Big(\frac{p'}{q}\Big)^{1/q}\Big[\frac{n}{q\beta}B(\frac{n}{q\beta},\frac{n}{q'\beta})\Big]^{-\beta/n}.$$

By virtue of this the best bound, we can also extend Theorem K to the case of the mixed radial-angular norms, thus we further improves  Theorem J.

\begin{theorem}\label{th4.1}
Suppose that $0<\beta<n$, $1<p_1, q_1, p_2, q_2<\infty$ and $\frac{1}{p_1}-\frac{1}{p_2}=\frac{\beta}{n}$, then we have
$$\|\mathrm{H}_{\beta}\|_{{L^{p_1}_{rad}L^{q_1}_{ang}(\mathbb R^{n})}\rightarrow L^{p_2}_{rad}L^{q_2}_{ang}(\mathbb R^{n})}={{\omega}_{n}}^{1/q_2-1/q_1+\beta/n}\Big(\frac{p'_1}{p_2}\Big)^{1/p_2}\Big[\frac{n}{p_2\beta}B(\frac{n}{p_2\beta},\frac{n}{p_2'\beta})\Big]^{-\beta/n}.$$
\end{theorem}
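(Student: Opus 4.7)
My plan is to reduce the theorem to the sharp Lebesgue estimate (Theorem K) by means of the same radialization technique that underlies the proofs of Theorems~\ref{th3.1}, \ref{th3.2}, and \ref{th3.3}. The crucial observation is that $\mathrm{H}_{\beta}(f)(x)$ depends only on $|x|$, since its defining integral involves $f$ only through the ball $|y|\le|x|$ and a factor of $|x|$. Consequently $\mathrm{H}_\beta$ always produces a radial function, and if we set
$$g_f(x)=\frac{1}{\omega_n}\int_{S^{n-1}}f(|x|\xi)\,d\sigma(\xi),$$
then a polar-coordinate computation shows the pointwise identity $\mathrm{H}_\beta(f)(x)=\mathrm{H}_\beta(g_f)(x)$ for every $x\in\mathbb R^n\setminus\{0\}$.

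Next I would verify that the spherical-averaging operator $f\mapsto g_f$ does not increase the mixed radial-angular norm, that is,
$$\|g_f\|_{L^{p_1}_{rad}L^{q_1}_{ang}(\mathbb R^n)}\le\|f\|_{L^{p_1}_{rad}L^{q_1}_{ang}(\mathbb R^n)},$$
which follows from H\"older's inequality applied on each sphere $|x|=r$ (this is the same averaging step that appears in the proofs of Theorems \ref{th3.1} and \ref{th3.3}). Combined with the identity above, this shows that the operator norm of $\mathrm{H}_\beta$ between the two mixed radial-angular spaces is attained on nonnegative radial functions.

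For a radial $f(x)=F(|x|)$, polar coordinates directly yield the conversion formula
$$\|f\|_{L^{p}_{rad}L^{q}_{ang}(\mathbb R^n)}=\omega_n^{\frac{1}{q}-\frac{1}{p}}\|f\|_{L^{p}(\mathbb R^n)}.$$
Applying this to $\mathrm{H}_\beta(f)$ with exponents $(p_2,q_2)$ and to $f$ with exponents $(p_1,q_1)$, and using the scaling relation $\frac{1}{p_1}-\frac{1}{p_2}=\frac{\beta}{n}$, I obtain
$$\|\mathrm{H}_\beta\|_{L^{p_1}_{rad}L^{q_1}_{ang}\rightarrow L^{p_2}_{rad}L^{q_2}_{ang}}=\omega_n^{\frac{1}{q_2}-\frac{1}{q_1}+\frac{\beta}{n}}\,\|\mathrm{H}_\beta\|_{L^{p_1}(\mathbb R^n)\rightarrow L^{p_2}(\mathbb R^n)}.$$
Inserting the sharp Lebesgue constant supplied by Theorem K then yields the claimed identity.

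The main point that needs care, rather than a genuine obstacle, is the sharpness. The averaging reduction guarantees only that the operator norm equals its value on radial inputs, and that the $\omega_n$-power cancels consistently in the ratio defining the operator norm; one must then invoke that Theorem K's extremal sequence is itself radial (a power function truncated on the complement of the origin), so the same sequence realizes the sharp constant in the mixed-norm setting after multiplication by the factor $\omega_n^{1/q_2-1/q_1+\beta/n}$. Checking this is routine and closes the proof.
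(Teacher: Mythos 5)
Your proposal is correct and follows essentially the same route as the paper: the paper's proof consists precisely of the remark that the radialization argument of Theorem \ref{th3.1} reduces the operator norm to radial functions, after which Theorem K and the conversion factor ${\omega_n}^{1/q_2-1/q_1+\beta/n}$ give the result. Your write-up in fact supplies more detail than the paper does, in particular the explicit identity $\mathrm{H}_\beta(f)=\mathrm{H}_\beta(g_f)$, the norm-decrease of the spherical averaging, and the observation that the extremizers in Theorem K are radial so the sharpness transfers.
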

\begin{proof}
 By using the same method as Theorem \ref{th3.1}, we know that the norm of the operator $\mathrm{H}_{\beta}$ is equal to the norm of the operator of $\mathrm{H}_{\beta}$ restricts to the set of nonnegative radial functions, which combined with Theorem K deduces that Theorem \ref{th4.1}.
\end{proof}

Next, we will recall the weak-type estimate for the fractional Hardy operator $\mathrm{H}_{\beta}$. In 2013, Lu et al. established the following sharp estimate.

{\bf Theorem L} (\cite{LYZ}) Suppose that $0<\beta<n$, $1<p, q<\infty$ and $\frac{1}{p}-\frac{1}{q}=\frac{\beta}{n}$, then for $\lambda>0$, we have
$$\Big|\Big\{x\in\mathbb R^n: |\mathrm{H}_{\beta}(f)(x)|>\lambda \Big\}\Big|\leq \bigg(\frac{\|f\|_{L^{p}(\mathbb R^{n})}}{\lambda}\bigg)^{\frac{n}{n-\beta}}.$$
Moreover,
$$\|\mathrm{H}_{\beta}\|_{L^1(\mathbb R^n)\rightarrow L^{\frac{n}{n-\beta},\infty}(\mathbb R^n)}=1.$$
Here, the norm $\|f\|_{L^{p,\infty}(\mathbb R^n)}$ (weak $L^p(\mathbb R^n)$ norm), defined by
$$\|f\|_{L^{p,\infty}(\mathbb R^n)}=\sup\limits_{\lambda>0}\lambda\Big|\Big\{x\in\mathbb R^n: |f(x)|>\lambda \Big\}\Big|^{1/p}.$$
It can be rewrite as
\begin{align*}
\|f\|_{L^{p,\infty}(\mathbb R^n)}&=\sup\limits_{\lambda>0}\lambda\Big(\int_{\mathbb R^n}\chi_{\{x\in\mathbb R^n: |f(x)|>\lambda\}}(x)dx\Big)^{1/p}\\
&=\sup\limits_{\lambda>0}\lambda\big\|\chi_{\{\cdot\:\in\mathbb R^n\::\: |f(\cdot)|>\lambda\}}\big\|_{{L^p(\mathbb R^n)}}.
\end{align*}
Inspired by this, the first author and the third author in \cite{LT} give the definition of the weak mixed radial-angular norms  as follows:
\begin{align*}
\|f\|_{\mathcal{W}L^{p}_{rad}L^{\tilde{p}}_{ang}(\mathbb R^n)}=\sup\limits_{\lambda>0}\lambda\big\|\chi_{\{\cdot\:\in\mathbb R^n\:: \: |f(\cdot)|>\lambda\}}\big\|_{L^{p}_{rad}L^{\tilde{p}}_{ang}(\mathbb R^n)}, \quad 1\leq p, \tilde{p}<\infty,
\end{align*}
and they also extended Theorem L to the following case of  weak mixed radial-angular norms.

{\bf Theorem M}  Suppose that $0<\beta<n$, $1\leq p_1, q_1, p_2, q_2<\infty$ and $\frac{1}{p_1}-\frac{1}{p_2}=\frac{\beta}{n}$, then
the fractional Hardy operator $\mathrm{H}_{\beta}$ is bounded from ${L^{p_1}_{rad}L^{q_1}_{ang}(\mathbb R^{n})}$ to ${\mathcal{W}L^{p_2}_{rad}L^{q_2}_{ang}(\mathbb R^{n})}$.
Moreover,
$$\|\mathrm{H}_{\beta}\|_{L^{p_1}_{rad}L^{q_1}_{ang}(\mathbb R^{n})\rightarrow \mathcal{W}L^{p_2}_{rad}L^{q_2}_{ang}(\mathbb R^{n})}={{\omega}_{n}}^{1/q_2-1/q_1+\beta/n}.$$

In \cite{YL}, Yu et al. obtained the following sharp weak bound for the operator  $\mathrm{H}_{\beta}$.

{\bf Theorem N}  Let  $1<p<\infty$, $1\leq q<\infty$, $\alpha_1<np-n$, $n+\alpha_2>0$, $0<\beta\leq\frac{\alpha_1}{p-1}$. If
 $$\frac{\alpha_2+n}{q}=\frac{\alpha_1+n}{p}-\beta.$$
Then
$$\|\mathrm{H}_{\beta}\|_{L^p(\mathbb R^n,|x|^{\alpha_1})\rightarrow L^{q,\infty}(\mathbb R^n,|x|^{\alpha_2})}=\frac{1}{\nu_{n}^{1-\frac{\beta}{n}}}\Big(\frac{\omega_n}{n-\frac{\alpha_1}{p-1}}\Big)^{1/p'}\Big(\frac{\omega_n}{n+\alpha_2}\Big)^{1/q}.$$

Similarly, we define the weak weighted mixed radial-angular norms as follows:
\begin{align*}
&\|f\|_{\mathcal{W}L^{p}_{rad}L^{\tilde{p}}_{ang}(\mathbb R^n,\omega)}\\
&=\sup\limits_{\lambda>0}\lambda\big\|\chi_{\{\cdot\:\in\mathbb R^n\:: \: |f(\cdot)|>\lambda\}}\big\|_{L^{p}_{rad}L^{\tilde{p}}_{ang}(\mathbb R^n,\omega)} \quad (1\leq p, \tilde{p}<\infty)\\
&=\sup\limits_{\lambda>0}\lambda\Big(\int_{0}^{\infty}\Big(\int_{S^{n-1}}|\chi_{\{r\theta\in(0,\infty)\times{S}^{n-1}: |f(r\theta)|>\lambda\}}|^{\tilde{p}}d\sigma(\theta)\Big)^{p/\tilde{p}}\omega(r)r^{n-1}dr\Big)^{1/p}.
\end{align*}

Next, we will extended Theorem N to the following case of  weak  weighted mixed radial-angular norms.

\begin{theorem}\label{th4.2} Let  $1<p_1, \tilde{p}_1, \tilde{p}_2<\infty$, $1\leq p_2<\infty$, $\alpha_1<np_1-n$, $n+\alpha_2>0$, $0<\beta\leq\frac{\alpha_1}{p_1-1}$. If
 $$\frac{\alpha_2+n}{p_2}=\frac{\alpha_1+n}{p_1}-\beta.$$
Then
$$\|\mathrm{H}_{\beta}\|_{L^{p_1}_{rad}L^{\tilde{p_1}}_{ang}(\mathbb R^{n},|x|^{\alpha_1})\rightarrow \mathcal{W}L^{p_2}_{rad}L^{\tilde{p_2}}_{ang}(\mathbb R^{n},|x|^{\alpha_2})}=\frac{\omega_{n}^{\frac{1}{\tilde{p_1}'}+\frac{1}{\tilde{p_2}}}}{\nu_{n}^{1-\frac{\beta}{n}}}\Big(\frac{1}{n-\frac{\alpha_1}{p_1-1}}\Big)^{1/p'_1}\Big(\frac{1}{n+\alpha_2}\Big)^{1/p_2}.$$
\end{theorem}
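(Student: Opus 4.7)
The plan is to reduce Theorem \ref{th4.2} to the already sharp weighted weak-type estimate of Theorem N by a spherical symmetrization argument, after which the remaining work is careful bookkeeping of powers of $\omega_n$.

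First, following the strategy of the proofs of Theorems \ref{th3.1} and \ref{th3.3}, I would introduce the spherical average
$$g_f(x) = \frac{1}{\omega_n}\int_{S^{n-1}} f(|x|\xi)\,d\sigma(\xi).$$
Two observations make this reduction effective: (i) writing the ball integral in polar coordinates and interchanging the order of integration shows $\mathrm{H}_\beta f = \mathrm{H}_\beta g_f$; (ii) for each fixed $r>0$, H\"older's inequality on $S^{n-1}$ yields $\|g_f(r\cdot)\|_{L^{\tilde{p_1}}(S^{n-1})}\le \|f(r\cdot)\|_{L^{\tilde{p_1}}(S^{n-1})}$, and hence $\|g_f\|_{L^{p_1}_{\rm rad}L^{\tilde{p_1}}_{\rm ang}(\mathbb R^n,|x|^{\alpha_1})} \le \|f\|_{L^{p_1}_{\rm rad}L^{\tilde{p_1}}_{\rm ang}(\mathbb R^n,|x|^{\alpha_1})}$. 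Together these force the operator norm of $\mathrm{H}_\beta$ between the mixed radial-angular spaces to equal its norm on the subclass of nonnegative radial inputs.

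Next, I would exploit the simple identities, valid for any radial function $h$ on $\mathbb R^n$,
$$\|h\|_{L^{p}_{\rm rad}L^{\tilde{p}}_{\rm ang}(\mathbb R^n,|x|^\alpha)} = \omega_n^{1/\tilde{p} - 1/p}\,\|h\|_{L^p(\mathbb R^n,|x|^\alpha)},$$
and, since the super-level sets $\{|h|>\lambda\}$ are then radial,
$$\|h\|_{\mathcal{W}L^{p}_{\rm rad}L^{\tilde{p}}_{\rm ang}(\mathbb R^n,|x|^\alpha)} = \omega_n^{1/\tilde{p} - 1/p}\,\|h\|_{L^{p,\infty}(\mathbb R^n,|x|^\alpha)}.$$
Applying these identities to $f$ and to $\mathrm{H}_\beta f$ (which is radial whenever $f$ is) and invoking Theorem N produces the upper bound with constant
$$\omega_n^{(1/\tilde{p_2}-1/p_2)-(1/\tilde{p_1}-1/p_1)}\cdot\frac{1}{\nu_n^{1-\beta/n}}\Big(\frac{\omega_n}{n-\alpha_1/(p_1-1)}\Big)^{1/p_1'}\Big(\frac{\omega_n}{n+\alpha_2}\Big)^{1/p_2}.$$
Collecting the $\omega_n$ exponents and using $1/p_1+1/p_1'=1$ collapses them to $1/\tilde{p_1}'+1/\tilde{p_2}$, matching the stated constant.

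The matching lower bound is inherited for free from the sharpness of Theorem N: any extremal sequence of radial functions saturating the weighted weak-type inequality in Theorem N also saturates the mixed radial-angular version, because on radial functions the two (quasi-)norms differ only by the universal factor $\omega_n^{1/\tilde{p} - 1/p}$. The only real obstacle is the careful accounting of the four $\omega_n$ exponents to confirm the cancellation; all substantive analytic content has already been absorbed into Theorem N and the spherical symmetrization step.
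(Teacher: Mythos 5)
Your argument is correct, but it is not the route the paper takes for this particular theorem. The paper proves Theorem \ref{th4.2} from scratch: the upper bound comes from a pointwise estimate $|\mathrm{H}_{\beta}f(x)|\le I_0|x|^{-\frac{n}{p_1}-\frac{\alpha_1}{p_1}+\beta}$ obtained by applying H\"older's inequality first on $S^{n-1}$ and then in the radial variable, after which the weak norm of the resulting superlevel set (a ball) is computed explicitly; the lower bound is obtained by testing on the explicit extremizer $f_0(x)=|x|^{-\alpha_1/(p_1-1)}\chi_{\{|x|<1\}}$, writing out $\mathrm{H}_{\beta}(f_0)$ in closed form and running a case analysis over $0<\lambda<I_1$, $\lambda\ge I_1$ with $\beta=\alpha_1/(p_1-1)$, and $\lambda\ge I_1$ with $\beta<\alpha_1/(p_1-1)$. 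Your proof instead symmetrizes ($\mathrm{H}_{\beta}f=\mathrm{H}_{\beta}g_f$ for $f\ge 0$, $\|g_f\|\le\|f\|$ by Jensen/H\"older on the sphere), observes that on radial functions both the strong and the weak mixed radial-angular norms are exactly $\omega_n^{1/\tilde{p}-1/p}$ times the corresponding weighted Lebesgue norms (the superlevel sets being radial), and then imports Theorem N wholesale for both directions of the equality; your bookkeeping of the $\omega_n$ exponents, $(1/\tilde{p}_2-1/p_2)-(1/\tilde{p}_1-1/p_1)+1/p_1'+1/p_2=1/\tilde{p}_1'+1/\tilde{p}_2$, checks out. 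This is precisely the strategy the paper itself uses for the strong-type Theorem \ref{th4.1} (reduction to radial functions plus Theorem K), so your proof is arguably the more uniform and economical one: it avoids reproving the Lebesgue-space sharpness and the case analysis in $\lambda$, at the cost of being non-self-contained (everything substantive is delegated to Theorem N, including the fact that its extremizers are radial, which you should state explicitly or re-derive via the same symmetrization applied with $\tilde{p}_i=p_i$). The paper's longer computation, by contrast, rechecks the extremal function directly in the mixed-norm setting and so does not rely on the sharpness statement of Theorem N, only on the closed form of $\mathrm{H}_{\beta}(f_0)$.
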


\begin{proof}
Note that $n-\frac{\alpha_1}{p_1-1}>0$, we have
\begin{align*}
&\frac{1}{|B(0,|x|)|^{1-\frac{\beta}{n}}}\Big|\int_{|y|<|x|}f(y)dy\Big|\\
&\leq\frac{\omega_{n}^{\frac{1}{\tilde{p_1}'}}}{(\nu_{n}|x|^n)^{1-\frac{\beta}{n}}}\int_{0}^{|x|}\rho^{-\alpha_1/p_1}\Big(\int_{{S}^{n-1}}|f(\rho,\theta)|^{\tilde{p_1}}d\sigma(\theta)\Big)^{1/\tilde{p_1}}\rho^{\alpha_1/p_1}\rho^{n-1}d\rho\\
&\leq \frac{\omega_{n}^{\frac{1}{\tilde{p_1}'}}}{(\nu_{n}|x|^n)^{1-\frac{\beta}{n}}}\Big(\int_{0}^{|x|}\rho^{-\alpha_1p'_1/p_1}\rho^{n-1}d\rho\Big)^{1/p'_1}\\
&\quad\times\bigg(\int_{0}^{\infty}\Big(\int_{{S}^{n-1}}|f(\rho,\theta)|^{\tilde{p_1}}d\sigma(\theta)\Big)^{p_1/\tilde{p_1}} \rho^{\alpha_1}\rho^{n-1}d\rho\Big)^{1/p_1}\\
&=\frac{\omega_{n}^{\frac{1}{\tilde{p_1}'}}}{\nu_{n}^{1-\frac{\beta}{n}}}\Big(\frac{1}{n-\frac{\alpha_1}{p_1-1}}\Big)^{1/p'_1}|x|^{-\frac{n}{p_1}-\frac{\alpha_1}{p_1}+\beta}\|f\|_{L^{p_1}_{rad}L^{\tilde{p_1}}_{ang}(\mathbb R^{n},|x|^{\alpha_1})}\\
&:=I_0|x|^{-\frac{n}{p_1}-\frac{\alpha_1}{p_1}+\beta}.
\end{align*}
Thus,
\begin{align*}
&\big\|\chi_{\{x\in\mathbb R^n:  |\mathrm{H}_{\beta}f(x)|>\lambda\}}\big\|_{L^{p_2}_{rad}L^{\tilde{p_2}}_{ang}(\mathbb R^n,|x|^{\alpha_2})}\\
&\leq\Big\|\chi_{\{x\in\mathbb R^n: I_0|x|^{-\frac{n}{p_1}-\frac{\alpha_1}{p_1}+\beta}>\lambda\}}\Big\|_{L^{p_2}_{rad}L^{\tilde{p_2}}_{ang}(\mathbb R^n,|x|^{\alpha_2})}\\
&=\Big\|\chi_{\{x\in\mathbb R^n: {|x|^{\frac{n+\alpha_2}{p_2}}}<\frac{I_0}{\lambda}\}}\Big\|_{L^{p_2}_{rad}L^{\tilde{p_2}}_{ang}(\mathbb R^n,|x|^{\alpha_2})}\\
&={{\omega}_{n}}^{1/\tilde{p_2}}\Bigg(\int_{0}^{\big(\frac{I_0}{\lambda}\big)^{\frac{p_2}{n+\alpha_2}}}r^{\alpha_2+n-1}dr\Bigg)^{1/p_2}\\
&=\frac{\omega_{n}^{\frac{1}{\tilde{p_1}'}+\frac{1}{\tilde{p_2}}}}{\nu_{n}^{1-\frac{\beta}{n}}}\Big(\frac{1}{n-\frac{\alpha_1}{p_1-1}}\Big)^{1/p'_1}\Big(\frac{1}{n+\alpha_2}\Big)^{1/p_2}\frac{1}{\lambda}\|f\|_{L^{p_1}_{rad}L^{\tilde{p_1}}_{ang}(\mathbb R^{n},|x|^{\alpha_1})}.
\end{align*}
Next, we will show that the constant
 $$\frac{\omega_{n}^{\frac{1}{\tilde{p_1}'}+\frac{1}{\tilde{p_2}}}}{\nu_{n}^{1-\frac{\beta}{n}}}\Big(\frac{1}{n-\frac{\alpha_1}{p_1-1}}\Big)^{1/p'_1}\Big(\frac{1}{n+\alpha_2}\Big)^{1/p_2}$$ is sharp.
For this purpose, we set the function
$$f_0(x)=|x|^{-\frac{\alpha_1}{p_1-1}}\chi_{\{|x|<1\}}(x),$$
then we have
$$\|f_0\|_{L^{p_1}_{rad}L^{\tilde{p_1}}_{ang}(\mathbb R^{n},|x|^{\alpha_1})}={{\omega}_{n}}^{1/\tilde{p_1}}\Big(\frac{1}{n-\frac{\alpha_1}{p_1-1}}\Big)^{1/p_1}.$$
It follows from \cite{YL} that
\begin{eqnarray*}
\mathrm{H}_{\beta}(f_0)(x)=\frac{1}{\nu_{n}^{1-\frac{\beta}{n}}}\frac{\omega_n}{n-\frac{\alpha_1}{p_1-1}}
\begin{cases}
|x|^{\beta-\frac{\alpha_1}{p_1-1}},&\mbox{}|x|<1,\\
|x|^{\beta-n},&\mbox{}|x|\geq1.
\end{cases}
\end{eqnarray*}
For convenience, denote $I_1=\frac{1}{\nu_{n}^{1-\frac{\beta}{n}}}\frac{\omega_n}{n-\frac{\alpha_1}{p_1-1}}$.
Hence,
$$\{x\in\mathbb R^{n}: |\mathrm{H}_{\beta}(f_0)(x)|>\lambda\}=\{|x|<1: I_1|x|^{\beta-\frac{\alpha_1}{p_1-1}}>\lambda\}\cup\{|x|\geq1: I_1|x|^{\beta-n}>\lambda\}.$$
If $0<\lambda<I_1$, note that $\beta\leq\frac{\alpha_1}{p-1}$ and $\alpha_1<np_1-n$, we have $\beta<n$, and
\begin{align*}
\Big\{x\in\mathbb R^{n}: |\mathrm{H}_{\beta}(f_0)(x)|>\lambda\Big\}&=\{|x|<1\}\cup\Big\{|x|\geq1: |x|<\Big(\frac{I_1}{\lambda}\Big)^{\frac{1}{n-\beta}}\Big\}\\
&=\Big\{x\in\mathbb R^{n}: |x|<\Big(\frac{I_1}{\lambda}\Big)^{\frac{1}{n-\beta}}\Big\}.
\end{align*}
By using the facts
$$\|f_0\|_{L^{p_1}_{rad}L^{\tilde{p_1}}_{ang}(\mathbb R^{n},|x|^{\alpha_1})}={{\omega}_{n}}^{1/\tilde{p_1}}\Big(\frac{1}{n-\frac{\alpha_1}{p_1-1}}\Big)^{1/p_1}, \alpha_2+n>0, 1-\frac{n+\alpha_2}{(n-\beta)p_2}>0,$$
we obtain that
\begin{align*}
&\sup\limits_{0<\lambda<I_1}\lambda\big\|\chi_{\{x\in\mathbb R^n:  |\mathrm{H}_{\beta}f_0(x)|>\lambda\}}\big\|_{L^{p_2}_{rad}L^{\tilde{p_2}}_{ang}(\mathbb R^n,|x|^{\alpha_2})}\\
&\leq\sup\limits_{0<\lambda<I_1}\lambda\Bigg\|\chi_{\{x\in\mathbb R^n: |x|<\Big(\frac{I_1}{\lambda}\Big)^{\frac{1}{n-\beta}}\}}\Bigg\|_{L^{p_2}_{rad}L^{\tilde{p_2}}_{ang}(\mathbb R^n,|x|^{\alpha_2})}\\
&=\sup\limits_{0<\lambda<I_1}\lambda{{\omega}_{n}}^{1/\tilde{p_2}}\Bigg(\int_{0}^{\big(\frac{I_1}{\lambda}\big)^{\frac{1}{n-\beta}}}r^{\alpha_2+n-1}dr\Bigg)^{1/p_2}\\
&=\sup\limits_{0<\lambda<I_1}{{\omega}_{n}}^{1/\tilde{p_2}}\Big(\frac{1}{n+\alpha_2}\Big)^{1/p_2}I_1^{\frac{n+\alpha_2}{p_2(n-\beta)}}\lambda^{1-\frac{n+\alpha_2}{p_2(n-\beta)}}\\
&={{\omega}_{n}}^{1/\tilde{p_2}}\Big(\frac{1}{n+\alpha_2}\Big)^{1/p_2}I_1\\
&={{\omega}_{n}}^{1/\tilde{p_2}}\Big(\frac{1}{n+\alpha_2}\Big)^{1/p_2}\frac{1}{\nu_{n}^{1-\frac{\beta}{n}}}\frac{\omega_n}{n-\frac{\alpha_1}{p_1-1}}\\
&=\frac{\omega_{n}^{\frac{1}{\tilde{p_1}'}+\frac{1}{\tilde{p_2}}}}{\nu_{n}^{1-\frac{\beta}{n}}}\Big(\frac{1}{n-\frac{\alpha_1}{p_1-1}}\Big)^{1/p'_1}\Big(\frac{1}{n+\alpha_2}\Big)^{1/p_2}\|f_0\|_{L^{p_1}_{rad}L^{\tilde{p_1}}_{ang}(\mathbb R^{n},|x|^{\alpha_1})}.
\end{align*}
If $\lambda\geq I_1$ and $\beta=\frac{\alpha_1}{p_1-1}$, we have
$$\Big\{x\in\mathbb R^{n}: |\mathrm{H}_{\beta}(f_0)(x)|>\lambda\Big\}=\emptyset.$$
If $\lambda\geq I_1$ and $\beta<\frac{\alpha_1}{p_1-1}$, note that $\alpha_1<np_1-n$,  we have  $\beta<n$, and
$$\Big\{x\in\mathbb R^{n}: |\mathrm{H}_{\beta}(f_0)(x)|>\lambda\Big\}=\Big\{x\in\mathbb R^{n}: |x|<\Big(\frac{I_1}{\lambda}\Big)^{\frac{1}{\frac{\alpha_1}{p-1}-\beta}}\Big\}.$$
By using the following facts
$$\|f_0\|_{L^{p_1}_{rad}L^{\tilde{p_1}}_{ang}(\mathbb R^{n},|x|^{\alpha_1})}={{\omega}_{n}}^{1/\tilde{p_1}}\Big(\frac{1}{n-\frac{\alpha_1}{p_1-1}}\Big)^{1/p_1}, \alpha_2+n>0, 1-\frac{n+\alpha_2}{\frac{1}{(\frac{\alpha_1}{p-1}-\beta})p_2}<0,$$
we derive that
\begin{align*}
&\sup\limits_{\lambda\geq I_1}\lambda\big\|\chi_{\{x\in\mathbb R^n:  |\mathrm{H}_{\beta}f_0(x)|>\lambda\}}\big\|_{L^{p_2}_{rad}L^{\tilde{p_2}}_{ang}(\mathbb R^n,|x|^{\alpha_2})}\\
&\leq\sup\limits_{\lambda\geq I_1}\lambda\Bigg\|\chi_{\{x\in\mathbb R^n: |x|<\Big(\frac{I_1}{\lambda}\Big)^{\frac{1}{\frac{\alpha_1}{p-1}-\beta}}\}}\Bigg\|_{L^{p_2}_{rad}L^{\tilde{p_2}}_{ang}(\mathbb R^n,|x|^{\alpha_2})}\\
&=\sup\limits_{\lambda\geq L}\lambda{{\omega}_{n}}^{1/\tilde{p_2}}\Bigg(\int_{0}^{\big(\frac{I_1}{\lambda}\big)^{\frac{1}{\frac{\alpha_1}{p-1}-\beta}}}r^{\alpha_2+n-1}dr\Bigg)^{1/p_2}\\
&=\sup\limits_{\lambda\geq L}{{\omega}_{n}}^{1/\tilde{p_2}}\Big(\frac{1}{n+\alpha_2}\Big)^{1/p_2}I_1^{\frac{n+\alpha_2}{p_2\big({\frac{1}{\frac{\alpha_1}{p-1}-\beta}}\big)}}\lambda^{1-\frac{n+\alpha_2}{p_2\big({\frac{1}{\frac{\alpha_1}{p-1}-\beta}}\big)}}\\
&={{\omega}_{n}}^{1/\tilde{p_2}}\Big(\frac{1}{n+\alpha_2}\Big)^{1/p_2}I_1\\
&=\frac{\omega_{n}^{\frac{1}{\tilde{p_1}'}+\frac{1}{\tilde{p_2}}}}{\nu_{n}^{1-\frac{\beta}{n}}}\Big(\frac{1}{n-\frac{\alpha_1}{p_1-1}}\Big)^{1/p'_1}\Big(\frac{1}{n+\alpha_2}\Big)^{1/p_2}\|f_0\|_{L^{p_1}_{rad}L^{\tilde{p_1}}_{ang}(\mathbb R^{n},|x|^{\alpha_1})}.
\end{align*}
Therefore, we have
$$\|\mathrm{H}_{\beta}\|_{L^{p_1}_{rad}L^{\tilde{p_1}}_{ang}(\mathbb R^{n},|x|^{\alpha_1})\rightarrow \mathcal{W}L^{p_2}_{rad}L^{\tilde{p_2}}_{ang}(\mathbb R^{n},|x|^{\alpha_2})}=\frac{\omega_{n}^{\frac{1}{\tilde{p_1}'}+\frac{1}{\tilde{p_2}}}}{\nu_{n}^{1-\frac{\beta}{n}}}\Big(\frac{1}{n-\frac{\alpha_1}{p_1-1}}\Big)^{1/p'_1}\Big(\frac{1}{n+\alpha_2}\Big)^{1/p_2}.$$
The proof of Theorem \ref{th4.2} is completed.
\end{proof}

For the case of endpoint, the following of the weak  weighted mixed radial-angular sharp estimate will be obtained.
\begin{theorem}\label{th4.3} Let  $1<\tilde{p}_1, \tilde{p}_2<\infty$,  $n+\alpha_2>0$,  $0<\beta<n$.
Then
$$\|\mathrm{H}_{\beta}\|_{L^{1}_{rad}L^{\tilde{p_1}}_{ang}(\mathbb R^{n})\rightarrow \mathcal{W}L^{\frac{n+\alpha_2}{n-\beta}}_{rad}L^{\tilde{p_2}}_{ang}(\mathbb R^{n},|x|^{\alpha_2})}=\frac{\omega_{n}^{\frac{1}{\tilde{p_1}'}+\frac{1}{\tilde{p_2}}}}{\nu_{n}^{1-\frac{\beta}{n}}}\Big(\frac{1}{n+\alpha_2}\Big)^{(n-\beta)/(n+\alpha_2)}.$$
\end{theorem}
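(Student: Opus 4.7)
My plan is to prove Theorem \ref{th4.3} by closely mirroring the structure of Theorem \ref{th4.2}, but with the endpoint requiring a Dirac-sequence-type extremizer in place of the power-function extremizer used before.

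For the upper bound, I would apply Hölder's inequality in the angular variable to the inner integral defining $\mathrm{H}_\beta f$: writing $y=\rho\theta$ in spherical coordinates,
\begin{align*}
\mathrm{H}_\beta f(x) &\leq \frac{1}{\nu_n^{1-\beta/n}|x|^{n-\beta}}\int_0^{|x|}\int_{S^{n-1}}|f(\rho\theta)|\,d\sigma(\theta)\,\rho^{n-1}d\rho \\
&\leq \frac{\omega_n^{1/\tilde{p}_1'}}{\nu_n^{1-\beta/n}|x|^{n-\beta}}\|f\|_{L^1_{rad}L^{\tilde{p}_1}_{ang}(\mathbb R^n)}.
\end{align*}
Calling the constant in front $A$, the level set $\{|\mathrm H_\beta f|>\lambda\}$ is contained in the ball of radius $R=(A\|f\|/\lambda)^{1/(n-\beta)}$. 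Computing the weighted mixed norm of $\chi_{B(0,R)}$ and using the critical relation $p_2=(n+\alpha_2)/(n-\beta)$ so that $R^{n+\alpha_2}=(A\|f\|/\lambda)^{p_2}$ produces exactly the claimed constant after multiplying by $\lambda$. This is the same computation as at the end of the proof of Theorem \ref{th4.2} and presents no new difficulty.

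For the lower bound, the test function $|x|^{-\alpha_1/(p_1-1)}\chi_{|x|<1}$ used in Theorem \ref{th4.2} degenerates at $p_1=1$, so I would instead use the normalised Dirac sequence $f_\delta(x)=(\nu_n\delta^n)^{-1}\chi_{B(0,\delta)}(x)$, whose norm is $\|f_\delta\|_{L^1_{rad}L^{\tilde{p}_1}_{ang}(\mathbb R^n)}=\omega_n^{-1/\tilde{p}_1'}$ independently of $\delta$. A direct computation gives
\begin{equation*}
\mathrm H_\beta f_\delta(x)=\begin{cases}\dfrac{|x|^\beta}{\nu_n^{1-\beta/n}\delta^n},&|x|<\delta,\\[1ex]\dfrac{1}{\nu_n^{1-\beta/n}|x|^{n-\beta}},&|x|\geq\delta,\end{cases}
\end{equation*}
so that for fixed $\lambda$ with $\delta$ small, the super-level set is contained between two radii $r_0(\delta)$ and $R$, with $r_0(\delta)=(\lambda\nu_n^{1-\beta/n})^{1/\beta}\delta^{n/\beta}\to 0$ as $\delta\to 0$ (since $n/\beta>1$) while $R=(\nu_n^{1-\beta/n}\lambda)^{-1/(n-\beta)}$ is independent of $\delta$.

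Letting $\delta\to 0$ therefore reduces the weighted mixed norm of the super-level indicator to that of $\chi_{B(0,R)}$, and combining with $p_2=(n+\alpha_2)/(n-\beta)$ recovers the same constant as in the upper bound. The main obstacle is the bookkeeping in the lower bound: one must verify that the inner annulus contribution $r_0(\delta)^{\alpha_2+n}$ truly vanishes in the limit (which needs $n+\alpha_2>0$, an assumption of the theorem) and that taking $\sup_{\lambda>0}$ before $\lim_{\delta\to 0}$ is harmless, which follows because the bound one obtains after passing to the limit is independent of $\lambda$ and saturates the upper estimate.
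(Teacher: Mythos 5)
Your proposal is correct and matches the paper's argument in all essentials: the upper bound is the same pointwise estimate obtained by H\"older's inequality in the angular variable, giving $|\mathrm{H}_{\beta}f(x)|\le K_0|x|^{\beta-n}$ and hence a ball-shaped level set, and the lower bound uses the normalized indicator of a ball whose annular super-level set has vanishing inner radius in the relevant limit. The only cosmetic difference is that the paper fixes $f_0=\chi_{B(0,1)}$ and lets $\lambda\to 0^{+}$ so that the inner radius $(\lambda/M_0)^{1/\beta}$ of the annulus disappears, whereas you fix $\lambda$ and shrink the ball in your family $f_{\delta}$; by the dilation covariance of $\mathrm{H}_{\beta}$ these are the same computation in different variables, and your bookkeeping (the norm $\omega_n^{-1/\tilde{p}_1'}$ of $f_\delta$, the two-regime formula for $\mathrm{H}_{\beta}f_{\delta}$, and the use of $n+\alpha_2>0$ to kill the $r_0(\delta)^{n+\alpha_2}$ term) all check out.
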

\begin{proof}
It is easy to see that
$$|\mathrm{H}_{\beta}f(x)|\leq\frac{\omega_{n}^{\frac{1}{\tilde{p_1}'}}}{\nu_{n}^{1-\frac{\beta}{n}}}\frac{1}{|x|^{n-\beta}}\|f\|_{L^{1}_{rad}L^{\tilde{p_1}}_{ang}(\mathbb R^{n})}:=K_0\frac{1}{|x|^{n-\beta}}.$$
 Since $n-\beta>0$ and $n+\alpha_2>0$, we have
 \begin{align*}
&\sup\limits_{\lambda>0}\lambda\big\|\chi_{\{x\in\mathbb R^n:  |\mathrm{H}_{\beta}f(x)|>\lambda\}}\big\|_{L^{\frac{n+\alpha_2}{n-\beta}}_{rad}L^{\tilde{p_2}}_{ang}(\mathbb R^n,|x|^{\alpha_2})}\\
&\leq\sup\limits_{\lambda>0}\lambda\Bigg\|\chi_{\{x\in\mathbb R^n: |x|<\Big(\frac{K_0}{\lambda}\Big)^{\frac{1}{n-\beta}}\}}\Bigg\|_{L^{\frac{n+\alpha_2}{n-\beta}}_{rad}L^{\tilde{p_2}}_{ang}(\mathbb R^n,|x|^{\alpha_2})}\\
&=\sup\limits_{\lambda>0}\lambda{{\omega}_{n}}^{1/\tilde{p_2}}\Bigg(\int_{0}^{\big(\frac{K_0}{\lambda}\big)^{\frac{1}{n-\beta}}}r^{\alpha_2+n-1}dr\Bigg)^{^{(n-\beta)/(n+\alpha_2)}}\\
\end{align*}
\begin{align*}
&={{\omega}_{n}}^{1/\tilde{p_2}}\Big(\frac{1}{n+\alpha_2}\Big)^{(n-\beta)/(n+\alpha_2)}K_0\\
&=\frac{\omega_{n}^{\frac{1}{\tilde{p_1}'}+\frac{1}{\tilde{p_2}}}}{\nu_{n}^{1-\frac{\beta}{n}}}\Big(\frac{1}{n+\alpha_2}\Big)^{(n-\beta)/(n+\alpha_2)}\|f\|_{L^{1}_{rad}L^{\tilde{p_1}}_{ang}(\mathbb R^{n})}.
\end{align*}
On the other hand, we consider the function
$$f_0(x)=\chi_{\{ x\in\mathbb R^n: |x|<1\}}(x),$$
then we have
$$\|f_0\|_{L^{1}_{rad}L^{\tilde{p_1}}_{ang}(\mathbb R^{n})}=\frac{{{\omega}_{n}}^{1/\tilde{p_1}}}{n}.$$
It follows from \cite{YL} that
\begin{eqnarray*}
\mathrm{H}_{\beta}(f_0)(x)=\frac{1}{\nu_{n}^{1-\frac{\beta}{n}}}\frac{{\omega}_{n}}{n}
\begin{cases}
|x|^{\beta},&\mbox{}|x|<1,\\
|x|^{\beta-n},&\mbox{}|x|\geq1.
\end{cases}
\end{eqnarray*}
Denote $M_0=\frac{1}{\nu_{n}^{1-\frac{\beta}{n}}}\frac{\omega_{n}}{n}$ and
$$\{x\in\mathbb R^{n}: |\mathrm{H}_{\beta}(f_0)(x)|>\lambda\}=\{|x|<1: M_0|x|^{\beta}>\lambda\}\cup\{|x|\geq1: M_0|x|^{\beta-n}>\lambda\}.$$
If $0<\lambda<M_0$, note that $0<\beta<n$, we have
\begin{align*}
\Big\{x\in\mathbb R^{n}: |\mathrm{H}_{\beta}(f_0)(x)|>\lambda\Big\}=\Big\{x\in\mathbb R^{n}: \Big(\frac{\lambda}{M_0}\Big)^{\frac{1}{\beta}} <|x|<\Big(\frac{M_0}{\lambda}\Big)^{\frac{1}{n-\beta}}\Big\}.
\end{align*}
By using the following facts
$$\|f_0\|_{L^{1}_{rad}L^{\tilde{p_1}}_{ang}(\mathbb R^{n})}=\frac{{{\omega}_{n}}^{1/\tilde{p_1}}}{n}, \alpha_2+n>0,  0<\beta<n,$$
we obtain that

\begin{align*}
&\sup\limits_{0<\lambda<M_0}\lambda\big\|\chi_{\{x\in\mathbb R^n:  |\mathrm{H}_{\beta}f_0(x)|>\lambda\}}\big\|_{L^{\frac{n+\alpha_2}{n-\beta}}_{rad}L^{\tilde{p_2}}_{ang}(\mathbb R^n,|x|^{\alpha_2})}\\
&\leq\sup\limits_{0<\lambda<M_0}\lambda\Bigg\|\chi_{\{x\in\mathbb R^n: \Big(\frac{\lambda}{M_0}\Big)^{\frac{1}{\beta}} <|x|<\Big(\frac{M_0}{\lambda}\Big)^{\frac{1}{n-\beta}}\}}\Bigg\|_{L^{\frac{n+\alpha_2}{n-\beta}}_{rad}L^{\tilde{p_2}}_{ang}(\mathbb R^n,|x|^{\alpha_2})}\\
&=\sup\limits_{0<\lambda<M_0}\lambda{{\omega}_{n}}^{1/\tilde{p_2}}\Bigg(\int_{\Big(\frac{\lambda}{M_0}\Big)^{\frac{1}{\beta}}}^{\big(\frac{M_0}{\lambda}\big)^{\frac{1}{n-\beta}}}r^{\alpha_2+n-1}dr\Bigg)^{^{(n-\beta)/(n+\alpha_2)}}\\
&={{\omega}_{n}}^{1/\tilde{p_2}}\Big(\frac{1}{n+\alpha_2}\Big)^{(n-\beta)/(n+\alpha_2)}M_0\\
&={{\omega}_{n}}^{1/\tilde{p_2}}\Big(\frac{1}{n+\alpha_2}\Big)^{(n-\beta)/(n+\alpha_2)}\frac{1}{\nu_{n}^{1-\frac{\beta}{n}}}\frac{\omega_{n}}{n}\\
&=\frac{\omega_{n}^{\frac{1}{\tilde{p_1}'}+\frac{1}{\tilde{p_2}}}}{\nu_{n}^{1-\frac{\beta}{n}}}\Big(\frac{1}{n+\alpha_2}\Big)^{(n-\beta)/(n+\alpha_2)}\|f_0\|_{L^{1}_{rad}L^{\tilde{p_1}}_{ang}(\mathbb R^{n})}.
\end{align*}

If $\lambda\geq M_0$ and by using the condition  $0<\beta<n$, we have
$$\Big\{x\in\mathbb R^{n}: |\mathrm{H}_{\beta}(f_0)(x)|>\lambda\Big\}=\emptyset.$$

Therefore, we have
$$\|\mathrm{H}_{\beta}\|_{L^{1}_{rad}L^{\tilde{p_1}}_{ang}(\mathbb R^{n})\rightarrow \mathcal{W}L^{\frac{n+\alpha_2}{n-\beta}}_{rad}L^{\tilde{p_2}}_{ang}(\mathbb R^{n},|x|^{\alpha_2})}=\frac{\omega_{n}^{\frac{1}{\tilde{p_1}'}+\frac{1}{\tilde{p_2}}}}{\nu_{n}^{1-\frac{\beta}{n}}}\Big(\frac{1}{n+\alpha_2}\Big)^{(n-\beta)/(n+\alpha_2)}.$$
The proof of Theorem \ref{th4.3} is completed.
\end{proof}

The conjugate operator $\mathrm{H}_{\beta}^*$ of the $n$-dimensional fractional
Hardy operator $\mathrm{H}_{\beta}$ is also an important operator stated as
$$\mathrm{H}^*_{\beta}(f)(x)= \int_{|y|\geq|x|}\frac{f(y)}{|B(0,|y|)|^{1-\frac{\beta}{n}}}dy.$$

In \cite{GHC}, Gao et al. obtained the following sharp weak bound for the operator  $\mathrm{H}^*_{\beta}$.

{\bf Theorem O}  Let  $0\leq\beta<n$, $1<p<\frac{n+\alpha}{\beta}$, and
 $$\frac{\alpha_1+n}{q}=\frac{\alpha+n}{p}-\beta.$$
 Then
$$\|\mathrm{H}^*_{\beta}\|_{L^p(\mathbb R^n,|x|^{\alpha})\rightarrow L^{q,\infty}(\mathbb R^n,|x|^{\alpha_1})}=\nu_n^{1/n(\frac{\alpha}{p}-\frac{\alpha_1}{q})}\Big(\frac{n}{n+\alpha_1}\Big)^{1/p'+1/q}\Big(\frac{q}{p'}\Big)^{1/p'}.$$

Similarly, we will extended Theorem O to the following case of  weak  weighted mixed radial-angular norms.

\begin{theorem}\label{th4.4} Let  $1<\tilde{p}_1, \tilde{p}_2<\infty$, $1< p_1<\frac{n+\alpha_1}{\beta}$. If
 $$\frac{\alpha_1+n}{p_2}=\frac{\alpha+n}{p_1}-\beta.$$
 Then
$$\|\mathrm{H}_{\beta}^*\|_{L^{p_1}_{rad}L^{\tilde{p_1}}_{ang}(\mathbb R^{n},|x|^{\alpha})\rightarrow \mathcal{W}L^{p_2}_{rad}L^{\tilde{p_2}}_{ang}(\mathbb R^{n},|x|^{\alpha_1})}=\frac{\omega_{n}^{\frac{1}{\tilde{p_1}'}+\frac{1}{\tilde{p_2}}}}{\nu_{n}^{1-\frac{\beta}{n}}}\Big(\frac{1}{n+\alpha_1}\Big)^{1/p'_1+1/p_2}\Big(\frac{p_2}{p'_1}\Big)^{1/p'_1}.$$
\end{theorem}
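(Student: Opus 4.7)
The plan is to mirror the structure of Theorems \ref{th4.2} and \ref{th4.3}, adapting the Hölder steps and the extremizer to the integration region $|y|\ge|x|$ that defines the conjugate operator rather than the region $|y|\le|x|$.

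For the upper bound I would first apply Hölder's inequality on $S^{n-1}$ with exponents $\tilde{p}_1,\tilde{p}_1'$ to reduce to a one-dimensional radial expression,
\[
|\mathrm{H}^*_\beta f(x)| \le \frac{\omega_n^{1/\tilde{p}_1'}}{\nu_n^{1-\beta/n}} \int_{|x|}^\infty \|f(\rho\,\cdot)\|_{L^{\tilde{p}_1}(S^{n-1})}\, \rho^{\beta-1}\, d\rho,
\]
and then apply Hölder's inequality in $\rho$ with exponents $p_1,p_1'$ against the weight $\rho^{n-1+\alpha}$. The leftover integral $\int_{|x|}^\infty \rho^{(\beta-1)p_1' - (\alpha+n-1)(p_1'-1)}\,d\rho$ converges precisely under the hypothesis $p_1<(\alpha+n)/\beta$, and a short computation using the linking relation $(\alpha_1+n)/p_2=(\alpha+n)/p_1-\beta$ yields the pointwise bound
\[
|\mathrm{H}^*_\beta f(x)| \le \frac{\omega_n^{1/\tilde{p}_1'}}{\nu_n^{1-\beta/n}}\Big(\frac{p_2}{p_1'(\alpha_1+n)}\Big)^{1/p_1'} |x|^{-(\alpha_1+n)/p_2}\, \|f\|_{L^{p_1}_{rad}L^{\tilde{p}_1}_{ang}(\mathbb R^n,|x|^{\alpha})}.
\]
Since $\{|\mathrm{H}^*_\beta f|>\lambda\}$ is then contained in a centered ball, I would compute its weak $L^{p_2}_{rad}L^{\tilde{p}_2}_{ang}(|x|^{\alpha_1})$-norm by spherical coordinates, picking up the factor $\omega_n^{1/\tilde{p}_2}(\alpha_1+n)^{-1/p_2}$ and arriving at the claimed operator-norm upper bound.

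For sharpness I would test with the Hölder-extremal function
\[
f_0(y) = |y|^{-(\alpha+n-\beta)/(p_1-1)}\, \chi_{\{|y|>1\}}(y),
\]
which is borderline $L^{p_1}(|x|^\alpha)$-integrable precisely because $p_1<(\alpha+n)/\beta$. Using the identity $\alpha+n-\beta p_1 = p_1(\alpha_1+n)/p_2$ one obtains
\[
\|f_0\|_{L^{p_1}_{rad}L^{\tilde{p}_1}_{ang}(|x|^{\alpha})}=\omega_n^{1/\tilde{p}_1}\Big(\frac{p_2}{p_1'(\alpha_1+n)}\Big)^{1/p_1},
\]
and a direct integration shows that $\mathrm{H}^*_\beta f_0$ is constant on $\{|x|\le 1\}$ and decays like $|x|^{-p_1'(\alpha_1+n)/p_2}$ on $\{|x|\ge 1\}$, with common value $M=\omega_n p_2/(\nu_n^{1-\beta/n} p_1'(\alpha_1+n))$ on the unit sphere. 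For $\lambda\in(0,M)$ the level set is the ball $\{|x|<(M/\lambda)^{p_2/(p_1'(\alpha_1+n))}\}$, giving
\[
\lambda\, \big\|\chi_{\{|\mathrm{H}^*_\beta f_0|>\lambda\}}\big\|_{L^{p_2}_{rad}L^{\tilde{p}_2}_{ang}(|x|^{\alpha_1})} = \omega_n^{1/\tilde{p}_2}(\alpha_1+n)^{-1/p_2}\, M^{1/p_1'}\, \lambda^{1/p_1},
\]
which is strictly increasing in $\lambda$ and therefore attains its supremum as $\lambda\to M^-$, yielding the weak norm $\omega_n^{1/\tilde{p}_2}(\alpha_1+n)^{-1/p_2}\,M$. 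Dividing by $\|f_0\|$ recovers exactly the claimed constant, matching the upper bound.

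The principal technical issue is the algebraic bookkeeping: one has to translate cleanly between the exponent $(\alpha+n-\beta)/(p_1-1)$ dictated by the Hölder equality condition, the decay rate $p_1'(\alpha_1+n)/p_2$ of $\mathrm{H}^*_\beta f_0$, and the final target constant $\omega_n^{1/\tilde{p}_1'+1/\tilde{p}_2}\nu_n^{-(1-\beta/n)}(\alpha_1+n)^{-(1/p_1'+1/p_2)}(p_2/p_1')^{1/p_1'}$. All three identifications pass through the single relation $(\alpha_1+n)/p_2=(\alpha+n)/p_1-\beta$, so no genuinely new ideas beyond those appearing in Theorems \ref{th4.2}--\ref{th4.3} are needed; the novelty is just the right choice of extremizer.
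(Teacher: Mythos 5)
Your proposal is correct and follows essentially the same route as the paper's proof: Hölder's inequality first on $S^{n-1}$ with exponents $\tilde{p}_1,\tilde{p}_1'$ and then in the radial variable with exponents $p_1,p_1'$ to get the pointwise bound $|\mathrm{H}^*_\beta f(x)|\le N_0|x|^{-(n+\alpha_1)/p_2}$, followed by the same extremizer $f_0(y)=|y|^{-(\alpha+n-\beta)/(p_1-1)}\chi_{\{|y|>1\}}$ (for which $\mathrm{H}^*_\beta f_0$ is constant on the unit ball and decays like $|x|^{-p_1'(n+\alpha_1)/p_2}$ outside) and the same supremum computation as $\lambda\to P_0^-$. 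The algebraic identifications you flag all reduce, as you say, to the relation $(\alpha_1+n)/p_2=(\alpha+n)/p_1-\beta$, exactly as in the paper.
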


\begin{proof}
Applying the H\"{o}lder inequality and  $1< p_1<\frac{n+\alpha_1}{\beta}$, we have
\begin{align*}
|\mathrm{H}_{\beta}^*f(x)|&\leq\int_{|y|\geq|x|}|f(y)|B(0,|y|)|^{\frac{\beta}{n}-1}dy\\
&=\frac{1}{\nu_{n}^{1-\frac{\beta}{n}}}\int_{|y|\geq|x|}|f(y)||y|^{{\beta-n}}dy\\
&=\frac{\omega_{n}^{\frac{1}{\tilde{p_1}'}}}{\nu_{n}^{1-\frac{\beta}{n}}}\int_{|x|}^{\infty}\Big(\int_{{S}^{n-1}}|f(\rho\theta)|^{\tilde{p_1}}d\sigma(\theta)\Big)^{1/\tilde{p_1}}\rho^{\beta-n}\rho^{n-1}d\rho\\
&\leq\frac{\omega_{n}^{\frac{1}{\tilde{p_1}'}}}{\nu_{n}^{1-\frac{\beta}{n}}}\Big(\int_{|x|}^{\infty}\Big(\int_{{S}^{n-1}}|f(\rho\theta)|^{\tilde{p_1}}d\sigma(\theta)\Big)^{p_1/\tilde{p_1}}\rho^{\frac{\alpha}{p_1}}\rho^{n-1}d\rho\Big)^{1/p_1}\\
&\quad\times\Big(\int_{|x|}^{\infty}\rho^{-\frac{\alpha  p'_1}{p_1}}\rho^{({\beta-n})p'_1}\rho^{n-1}d\rho\Big)^{1/p'_1}\\
&\leq\frac{\omega_{n}^{\frac{1}{\tilde{p_1}'}}}{\nu_{n}^{1-\frac{\beta}{n}}}\Big(\frac{p_1-1}{n+\alpha-p_1\beta}\Big)^{1/p'_1}|x|^{\beta-\frac{n+\alpha}{p_1}}\|f\|_{L^{p_1}_{rad}L^{\tilde{p_1}}_{ang}(\mathbb R^{n},|x|^{\alpha})}.
\end{align*}
By using the condition  $\frac{\alpha_2+n}{p_2}=\frac{\alpha_1+n}{p_1}-\beta$, we derive that
$$|\mathrm{H}_{\beta}^*f(x)|\leq\frac{\omega_{n}^{\frac{1}{\tilde{p_1}'}}}{\nu_{n}^{1-\frac{\beta}{n}}}\Big(\frac{p_2}{p'_1(n+\alpha_1)}\Big)^{1/p'_1}|x|^{-\frac{n+\alpha_1}{p_2}}\|f\|_{L^{p_1}_{rad}L^{\tilde{p_1}}_{ang}(\mathbb R^{n},|x|^{\alpha})}:=N_0|x|^{-\frac{n+\alpha_1}{p_2}}.$$
Then for any $\lambda>0$,
$$\{x\in\mathbb R^{n}: |\mathrm{H}^*_{\beta}(f)(x)|>\lambda\}\subset\{x\in\mathbb R^{n}: |x|<\Big(\frac{N_0}{\lambda}\Big)^{\frac{p_2}{n+\alpha_1}}\}.$$
Thus, we have
\begin{align*}
&\sup\limits_{0<\lambda<M_0}\lambda\big\|\chi_{\{x\in\mathbb R^n:  |\mathrm{H}^*_{\beta}f(x)|>\lambda\}}\big\|_{L^{p_2}_{rad}L^{\tilde{p_2}}_{ang}(\mathbb R^n,|x|^{\alpha_1})}\\
&\leq\sup\limits_{\lambda>0}\lambda\Bigg\|\chi_{\{x\in\mathbb R^n: |x|<\Big(\frac{N_0}{\lambda}\Big)^{\frac{p_2}{n+\alpha_1}}\}}\Bigg\|_{L^{p_2}_{rad}L^{\tilde{p_2}}_{ang}(\mathbb R^n,|x|^{\alpha_1})}\\
&=\sup\limits_{\lambda>0}\lambda{{\omega}_{n}}^{1/\tilde{p_2}}\Bigg(\int_{0}^{\big(\frac{N_0}{\lambda}\big)^{\frac{p_2}{n+\alpha_1}}}r^{\alpha_1+n-1}dr\Bigg)^{^{1/p_2}}\\
&={{\omega}_{n}}^{1/\tilde{p_2}}\Big(\frac{1}{n+\alpha_1}\Big)^{1/p_2}N_0\\
&={{\omega}_{n}}^{1/\tilde{p_2}}\Big(\frac{1}{n+\alpha_1}\Big)^{1/p_2}\frac{\omega_{n}^{\frac{1}{\tilde{p_1}'}}}{\nu_{n}^{1-\frac{\beta}{n}}}\Big(\frac{1}{n+\alpha_1}\Big)^{1/p'_1}\Big(\frac{p_2}{p'_1}\Big)^{1/p'_1}\|f\|_{L^{p_1}_{rad}L^{\tilde{p_1}}_{ang}(\mathbb R^{n},|x|^{\alpha})}\\
&=\frac{\omega_{n}^{\frac{1}{\tilde{p_1}'}+\frac{1}{\tilde{p_2}}}}{\nu_{n}^{1-\frac{\beta}{n}}}\Big(\frac{1}{n+\alpha_1}\Big)^{1/p'_1+1/p_2}\Big(\frac{p_2}{p'_1}\Big)^{1/p'_1}\|f\|_{L^{p_1}_{rad}L^{\tilde{p_1}}_{ang}(\mathbb R^{n},|x|^{\alpha})}.
\end{align*}
On the other hand, we take the function
$$f_0(x)=|x|^{\frac{\beta-n-\alpha}{p-1}}\chi_{\{ |x|>1\}}(x),$$
and $$\|f_0\|_{L^{p_1}_{rad}L^{\tilde{p_1}}_{ang}(\mathbb R^{n},|x|^{\alpha})}={{\omega}_{n}}^{1/\tilde{p_1}}\Big(\frac{p_1-1}{n+\alpha-p_1\beta}\Big)^{1/p_1}.$$
Note that $\frac{\alpha_1+n}{p_2}=\frac{\alpha+n}{p_1}-\beta$, we get
$$\|f_0\|_{L^{p_1}_{rad}L^{\tilde{p_1}}_{ang}(\mathbb R^{n},|x|^{\alpha})}={{\omega}_{n}}^{1/\tilde{p_1}}\Big(\frac{1}{n+\alpha_1}\Big)^{1/p_1}\Big(\frac{p_2}{p'_1}\Big)^{1/p_1}.$$
In addition,
\begin{eqnarray*}
\mathrm{H}^*_{\beta}(f_0)(x)=\frac{{\omega}_{n}}{\nu_{n}^{1-\frac{\beta}{n}}}\frac{1}{n+\alpha_1}\frac{p_2}{p'_1}
\begin{cases}
1,&\mbox{}|x|<1,\\
|x|^{-\frac{p'_1(n+\alpha_1)}{p_2}},&\mbox{}|x|\geq1.
\end{cases}
\end{eqnarray*}
For convenience, denote $P_0=\frac{{\omega}_{n}}{\nu_{n}^{1-\frac{\beta}{n}}}\frac{1}{n+\alpha_1}\frac{p_2}{p'_1}$.
Hence,
$$\{x\in\mathbb R^{n}: |\mathrm{H}_{\beta}(f_0)(x)|>\lambda\}=\{|x|<1: P_0>\lambda\}\cup\{|x|\geq1: P_0|x|^{-\frac{p'_1(n+\alpha_1)}{p_2}}>\lambda\}.$$
Therefore, we derive that
\begin{align*}
&\sup\limits_{0<\lambda<P_0}\lambda\big\|\chi_{\{x\in\mathbb R^n:  |\mathrm{H}^*_{\beta}f_0(x)|>\lambda\}}\big\|_{L^{p_2}_{rad}L^{\tilde{p_2}}_{ang}(\mathbb R^n,|x|^{\alpha_1})}\\
&=\sup\limits_{0<\lambda<P_0}\lambda{{\omega}_{n}}^{1/\tilde{p_2}}\Bigg(\int_{0}^{\frac{P_0}{\lambda}}r^{{\alpha_1}+n-1}dr\Bigg)^{1/p_2}\\
&={{\omega}_{n}}^{1/\tilde{p_2}}\Big(\frac{1}{n+\alpha_1}\Big)^{1/p_2}P_0\\
&={{\omega}_{n}}^{1/\tilde{p_2}}\Big(\frac{1}{n+\alpha_1}\Big)^{1/p_1}\frac{{\omega}_{n}}{\nu_{n}^{1-\frac{\beta}{n}}}\frac{1}{n+\alpha_1}\frac{p_2}{p'_1}\\
&=\frac{\omega_{n}^{\frac{1}{\tilde{p_1}'}+\frac{1}{\tilde{p_2}}}}{\nu_{n}^{1-\frac{\beta}{n}}}\Big(\frac{1}{n-\frac{\alpha_1}{p_1-1}}\Big)^{1/p'_1}\Big(\frac{1}{n+\alpha_2}\Big)^{1/p_2}\|f_0\|_{L^{p_1}_{rad}L^{\tilde{p_1}}_{ang}(\mathbb R^{n},|x|^{\alpha})}.
\end{align*}
If $\lambda>P_0$, then
$$\Big\{x\in\mathbb R^{n}: |\mathrm{H}^*_{\beta}(f_0)(x)|>\lambda\Big\}=\emptyset.$$
Hence, we have
$$\|\mathrm{H}_{\beta}^*\|_{L^{p_1}_{rad}L^{\tilde{p_1}}_{ang}(\mathbb R^{n},|x|^{\alpha})\rightarrow \mathcal{W}L^{p_2}_{rad}L^{\tilde{p_2}}_{ang}(\mathbb R^{n},|x|^{\alpha_1})}=\frac{\omega_{n}^{\frac{1}{\tilde{p_1}'}+\frac{1}{\tilde{p_2}}}}{\nu_{n}^{1-\frac{\beta}{n}}}\Big(\frac{1}{n+\alpha_1}\Big)^{1/p'_1+1/p_2}\Big(\frac{p_2}{p'_1}\Big)^{1/p'_1}.$$
This proves the proof of Theorem \ref{th4.4}.
\end{proof}

We also have the following  weak  weighted mixed radial-angular sharp estimate.
\begin{theorem}\label{th4.5}
Let $0\leq\beta<n$,  $1<p, \tilde{p}_1, \tilde{p}_2<\infty$ and $\min\{\alpha,\alpha_1\}>-n$. If
 $$\frac{\alpha_1+n}{p}=\alpha+n-\beta.$$
 Then
$$\|\mathrm{H}^*_{\beta}\|_{L^{1}_{rad}L^{\tilde{p_1}}_{ang}(\mathbb R^{n},|x|^{\alpha})\rightarrow \mathcal{W}L^{p}_{rad}L^{\tilde{p_2}}_{ang}(\mathbb R^{n},|x|^{\alpha_2})}=\frac{\omega_{n}^{\frac{1}{\tilde{p_1}'}+\frac{1}{\tilde{p_2}}}}{\nu_{n}^{1-\frac{\beta}{n}}}\Big(\frac{1}{n+\alpha_1}\Big)^{1/p}.$$
\end{theorem}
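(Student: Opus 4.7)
The plan is to parallel the structure of Theorem \ref{th4.4}: derive a pointwise majorization for $\mathrm{H}^*_\beta f(x)$ by H\"older in the angular variable combined with monotonicity of $\rho\mapsto\rho^{\beta-n-\alpha}$ in the radial variable, substitute this into the definition of the weak norm to get the upper bound, and then produce an approximating extremizing family to show sharpness.

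First I would reduce to nonnegative radial $f$ via the spherical averaging device used throughout Sections 3 and 4 (passing to $g_f(x)=\omega_n^{-1}\int_{S^{n-1}}f(|x|\xi)\,d\sigma(\xi)$ and comparing norms by Minkowski). For such $f$, switching to spherical coordinates and applying H\"older's inequality in $\theta$ with conjugate exponents $\tilde p_1$ and $\tilde p_1'$ on $S^{n-1}$ produces
\begin{equation*}
|\mathrm{H}^*_\beta f(x)|\le\frac{\omega_n^{1/\tilde p_1'}}{\nu_n^{1-\beta/n}}\int_{|x|}^{\infty}\|f(\rho\,\cdot)\|_{L^{\tilde p_1}(S^{n-1})}\rho^{\beta-1}\,d\rho.
\end{equation*}
The balance condition $(n+\alpha_1)/p=n+\alpha-\beta$ together with $\alpha_1>-n$ forces $\beta-n-\alpha<0$; splitting $\rho^{\beta-1}=\rho^{\beta-n-\alpha}\cdot\rho^{n-1+\alpha}$ and pulling the decreasing factor out at the lower limit $|x|$ yields the pointwise bound
\begin{equation*}
|\mathrm{H}^*_\beta f(x)|\le\frac{\omega_n^{1/\tilde p_1'}}{\nu_n^{1-\beta/n}}|x|^{-(n+\alpha_1)/p}\|f\|_{L^1_{\rm rad}L^{\tilde p_1}_{\rm ang}(\mathbb R^n,|x|^\alpha)}.
\end{equation*}
Feeding this into the weak-norm definition, the level set $\{|\mathrm{H}^*_\beta f|>\lambda\}$ is contained in a ball whose $L^p_{\rm rad}L^{\tilde p_2}_{\rm ang}(|x|^{\alpha_1})$ norm of its characteristic function is an explicit polar-coordinate integral; the $\lambda$-dependence cancels thanks to the exponent balance and produces exactly $\omega_n^{1/\tilde p_1'+1/\tilde p_2}/(\nu_n^{1-\beta/n}(n+\alpha_1)^{1/p})$.

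For sharpness, equality in the H\"older step would force $f$ to be angularly constant, and equality in the monotonicity step would force the mass of $f$ to concentrate on a single sphere; hence no single $L^1$ extremizer exists, but an approximating family saturates the constant. I would take $f_\epsilon(y)=\epsilon^{-1}\chi_{\{1<|y|<1+\epsilon\}}(y)$, an approximate surface measure on the unit sphere, and compute directly that $\|f_\epsilon\|_{L^1_{\rm rad}L^{\tilde p_1}_{\rm ang}(|x|^\alpha)}\to\omega_n^{1/\tilde p_1}$ as $\epsilon\to0^+$, while $\mathrm{H}^*_\beta f_\epsilon(x)$ is constant on $\{|x|<1\}$ with value $A_\epsilon=\omega_n[(1+\epsilon)^\beta-1]/(\nu_n^{1-\beta/n}\epsilon\beta)\to\omega_n/\nu_n^{1-\beta/n}$ (the $\beta=0$ case is handled by $\log(1+\epsilon)/\epsilon\to1$, giving the same limit $\omega_n/\nu_n$). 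Since $\{|\mathrm{H}^*_\beta f_\epsilon|>\lambda\}\supset\{|x|<1\}$ for $\lambda<A_\epsilon$ and $\|\chi_{\{|x|<1\}}\|_{L^p_{\rm rad}L^{\tilde p_2}_{\rm ang}(|x|^{\alpha_1})}=\omega_n^{1/\tilde p_2}/(n+\alpha_1)^{1/p}$, letting $\lambda\to A_\epsilon^-$ and then $\epsilon\to0^+$ yields the matching lower bound.

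The main obstacle is the choice of extremizing family, since no single $L^1$ function realizes the sharp constant (the formal extremizer is a Dirac mass on a sphere); the approximation above circumvents this, but one must take care with the $\beta=0$ endpoint where the relevant radial integral degenerates into a logarithm. Apart from this subtlety, the scheme is a routine combination of H\"older in the angular variable, monotonicity in the radial variable, and polar-coordinate computation, precisely analogous to the pattern used in Theorems \ref{th4.2}--\ref{th4.4}.
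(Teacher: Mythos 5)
Your argument is correct, and the upper-bound half coincides with the paper's: both reduce $\mathrm{H}^*_{\beta}f(x)$ to $Q_0|x|^{-(n+\alpha_1)/p}$ with $Q_0=\frac{\omega_n^{1/\tilde p_1'}}{\nu_n^{1-\beta/n}}\|f\|_{L^1_{\rm rad}L^{\tilde p_1}_{\rm ang}(|x|^{\alpha})}$ by combining angular H\"older with the monotonicity of $\rho\mapsto\rho^{\beta-n-\alpha}$ (the paper pulls out the decreasing power first and applies H\"older second; you do it in the opposite order, which is immaterial), and then both observe that the resulting level sets are balls whose weak-norm contribution is $\lambda$-independent by the exponent balance. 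Where you genuinely diverge is the lower bound. The paper tests against the steep power tails $f_{\varepsilon}(x)=|x|^{-(\beta+n+\alpha)/\varepsilon}\chi_{\{|x|>1\}}$, computes $\mathrm{H}^*_{\beta}f_{\varepsilon}$ explicitly on $\{|x|<1\}$ and $\{|x|\geq1\}$, and obtains the sharp constant up to the factor $\bigl(\tfrac{\beta+n+\alpha}{\varepsilon}-n-\alpha\bigr)/\bigl(\tfrac{\beta+n+\alpha}{\varepsilon}-\beta\bigr)\to1$; this requires tracking the full level-set structure and the sign of the exponent of $\lambda$ in the supremum. You instead use thin annular shells $\epsilon^{-1}\chi_{\{1<|y|<1+\epsilon\}}$ approximating surface measure on the unit sphere, for which $\mathrm{H}^*_{\beta}f_{\epsilon}$ is exactly constant on the unit ball, so only the single level set $\{|x|<1\}$ and the elementary limits $\frac{(1+\epsilon)^{\beta}-1}{\epsilon\beta}\to1$ (resp.\ $\frac{\log(1+\epsilon)}{\epsilon}\to1$ when $\beta=0$) are needed. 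Your family makes the nonexistence of an $L^1$ extremizer and the role of the Dirac mass on the sphere conceptually transparent, at the small cost of the separate $\beta=0$ computation; the paper's family has the advantage that the same template is reused verbatim across Theorems \ref{th4.2}--\ref{th4.4}. Both computations yield the identical constant, so the proof is complete either way.
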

\begin{proof}
By using the conditions $\frac{\alpha_1+n}{p}=\alpha+n-\beta$ and $\alpha_1>-n$, it is not hard to check that
\begin{align*}
|\mathrm{H}^*_{\beta}f(x)|&\leq\frac{1}{\nu_{n}^{1-\frac{\beta}{n}}}\int_{|y|\geq|x|}|y|^{-\frac{n+\alpha_1}{p}}|f(y)||y|^{\alpha}dy\\
&\leq|x|^{-\frac{n+\alpha_1}{p}}\frac{1}{\nu_{n}^{1-\frac{\beta}{n}}}\int_{|y|\geq|x|}|f(y)||y|^{\alpha}dy\\
&\leq|x|^{-\frac{n+\alpha_1}{p}}\frac{\omega_{n}^{\frac{1}{\tilde{p_1}'}}}{\nu_{n}^{1-\frac{\beta}{n}}}\int_{|x|}^{\infty}\Big(\int_{{S}^{n-1}}|f(\rho\theta)|^{\tilde{p_1}}d\sigma(\theta)\Big)^{1/\tilde{p_1}}\rho^{\alpha}\rho^{n-1}d\rho\\
&\leq|x|^{-\frac{n+\alpha_1}{p}}\frac{\omega_{n}^{\frac{1}{\tilde{p_1}'}}}{\nu_{n}^{1-\frac{\beta}{n}}}\|f\|_{L^{1}_{rad}L^{\tilde{p_1}}_{ang}(\mathbb R^{n},|x|^{\alpha})}\\
&=:Q_0|x|^{-\frac{n+\alpha_1}{p}}.
\end{align*}
 Then, for $\lambda>0$,  we have
 $$\{x\in\mathbb R^{n}: |\mathrm{H}^{*}_{\beta}(f)(x)|>\lambda\}\subset\{x\in\mathbb R^{n}:: Q_0|x|^{-\frac{n+\alpha_1}{p}}>\lambda\}.$$
Thus, we get
 \begin{align*}
&\sup\limits_{\lambda>0}\lambda\big\|\chi_{\{x\in\mathbb R^n:  |\mathrm{H}^{*}_{\beta}f(x)|>\lambda\}}\big\|_{L^{p}_{rad}L^{\tilde{p_2}}_{ang}(\mathbb R^n,|x|^{\alpha_1})}\\
&\leq\sup\limits_{\lambda>0}\lambda\Bigg\|\chi_{\{x\in\mathbb R^n: |x|<\Big(\frac{Q_0}{\lambda}\Big)^{\frac{p}{n+\alpha_1}}\}}\Bigg\|_{L^{p}_{rad}L^{\tilde{p_2}}_{ang}(\mathbb R^n,|x|^{\alpha_1})}\\
&=\sup\limits_{\lambda>0}\lambda{{\omega}_{n}}^{1/\tilde{p_2}}\Bigg(\int_{0}^{\big(\frac{Q_0}{\lambda}\big)^{\frac{p}{n+\alpha_1}}}r^{\alpha_1+n-1}dr\Bigg)^{1/p}\\
&={{\omega}_{n}}^{1/\tilde{p_2}}\Big(\frac{1}{n+\alpha_1}\Big)^{1/p}Q_0\\
&=\frac{\omega_{n}^{\frac{1}{\tilde{p_1}'}+\frac{1}{\tilde{p_2}}}}{\nu_{n}^{1-\frac{\beta}{n}}}\Big(\frac{1}{n+\alpha_1}\Big)^{1/p}\|f\|_{L^{1}_{rad}L^{\tilde{p_1}}_{ang}(\mathbb R^{n},|x|^{\alpha})}.
\end{align*}
On the other hand, we  choose the function
$$f_{\varepsilon}(x)=|x|^{-(\beta+n+\alpha)/\varepsilon}\chi_{\{ x\in\mathbb R^n: |x|>1\}}(x),\quad 0<\varepsilon<1,$$
and
$$\|f_{\varepsilon}\|_{L^{1}_{rad}L^{\tilde{p_1}}_{ang}(\mathbb R^{n},|x|^{\alpha})}={{\omega}_{n}}^{1/\tilde{p_1}}\frac{1}{\frac{\beta+n+\alpha}{\varepsilon}-n-\alpha}.$$
If $\alpha>-n$ and $0<\varepsilon<1$, then for $|x|>1$, we have
$$\mathrm{H}^{*}_{\beta}(f_{\varepsilon})(x)=\frac{\omega_{n}}{\nu_{n}^{1-\frac{\beta}{n}}}\frac{|x|^{\beta-(\frac{\beta+n+\alpha}{\varepsilon})}}{\frac{\beta+n+\alpha}{\varepsilon}-\beta}:=R_1|x|^{\beta-(\frac{\beta+n+\alpha}{\varepsilon})}.$$
If $0\leq\beta<n$ and $0<\varepsilon<1$,  we also have
$$\mathrm{H}^{*}_{\beta}(f_{\varepsilon})(x)=\frac{\omega_{n}}{\nu_{n}^{1-\frac{\beta}{n}}}\frac{1}{\frac{\beta+n+\alpha}{\varepsilon}-\beta}:=R_2.$$
This implies that
$$\{x\in\mathbb R^{n}: |\mathrm{H}^{*}_{\beta}(f_{\varepsilon})(x)|>\lambda\}=\{|x|<1: R_2>\lambda\}\cup\{|x|\geq1: R_1|x|^{\beta-(\frac{\beta+n+\alpha}{\varepsilon})}>\lambda\}.$$
If $\lambda\geq R_1$, then
$$\Big\{x\in\mathbb R^{n}: |\mathrm{H}^{*}_{\beta}(f_{\varepsilon})(x)|>\lambda\Big\}=\emptyset.$$
Therefore, for  $0\leq\beta<n$ and $0<\varepsilon<1$, we have

\begin{align*}
&\sup\limits_{0<\lambda<R_1}\lambda\big\|\chi_{\{x\in\mathbb R^n:  |\mathrm{H}^{*}_{\beta}f(x)|>\lambda\}}\big\|_{L^{p}_{rad}L^{\tilde{p_2}}_{ang}(\mathbb R^n,|x|^{\alpha})}\\
&={{\omega}_{n}}^{1/\tilde{p_2}}\Big(\frac{1}{n+\alpha_1}\Big)^{1/p}R_1^{\frac{n+\alpha-\beta}{\frac{\beta+n+\alpha}{\varepsilon}-\beta}}\sup\limits_{0<\lambda<R_1}\lambda^{1-\frac{n+\alpha-\beta}{\frac{\beta+n+\alpha}{\varepsilon}-\beta}}\\
&={{\omega}_{n}}^{1/\tilde{p_2}}\Big(\frac{1}{n+\alpha_1}\Big)^{1/p}R_1\\
&={{\omega}_{n}}^{1/\tilde{p_2}}\Big(\frac{1}{n+\alpha_1}\Big)^{1/p}\frac{\omega_{n}}{\nu_{n}^{1-\frac{\beta}{n}}}\frac{1}{\frac{\beta+n+\alpha}{\varepsilon}-\beta}\\
&=\frac{\omega_{n}^{\frac{1}{\tilde{p_1}'}+\frac{1}{\tilde{p_2}}}}{\nu_{n}^{1-\frac{\beta}{n}}}\Big(\frac{1}{n+\alpha_1}\Big)^{1/p}\frac{\frac{\beta+n+\alpha}{\varepsilon}-n-\alpha}{\frac{\beta+n+\alpha}{\varepsilon}-\beta}\|f_{\varepsilon}\|_{L^{1}_{rad}L^{\tilde{p_1}}_{ang}(\mathbb R^{n},|x|^{\alpha})}.
\end{align*}
Letting $\varepsilon\rightarrow 0^+$, we have
$$\frac{\frac{\beta+n+\alpha}{\varepsilon}-n-\alpha}{\frac{\beta+n+\alpha}{\varepsilon}-\beta}=1.$$
Therefore, we conclude that
$$\|\mathrm{H}^*_{\beta}\|_{L^{1}_{rad}L^{\tilde{p_1}}_{ang}(\mathbb R^{n},|x|^{\alpha})\rightarrow \mathcal{W}L^{p}_{rad}L^{\tilde{p_2}}_{ang}(\mathbb R^{n},|x|^{\alpha_2})}=\frac{\omega_{n}^{\frac{1}{\tilde{p_1}'}+\frac{1}{\tilde{p_2}}}}{\nu_{n}^{1-\frac{\beta}{n}}}\Big(\frac{1}{n+\alpha_1}\Big)^{1/p}.$$
The proof of Theorem \ref{th4.5} is completed.
\end{proof}

\section{Sharp Bounds of weighted Hardy-Littlewood averages}

In \cite{LT}, the first author and third author obtained the following the mixed radial-angular estimates for the weighted Hardy-Littlewood average $U_{\psi}(f)$ and the weighted Ces\`{a}ro average $V_{\psi}(f)$, respectively.

{\bf Theorem P}
Let $\psi:[0,1]\rightarrow[0,\infty)$ be a function and let $1<p<\infty$, $1<p_2\leq p_1<\infty$, then we have

(i) $U_{\psi}(f): L^{p}_{rad}L^{p_1}_{ang}(\mathbb R^{n})\rightarrow L^{p}_{rad}L^{p_2}_{ang}(\mathbb R^{n})$ exists as a bounded operator if and only if
$$\int_{0}^{1}t^{-n/p}\psi(t)dt<\infty.$$

Moreover, the operator norm of $U_{\psi}$  is given by
$$\|U_{\psi}\|_{L^{p}_{rad}L^{p_2}_{ang}(\mathbb R^{n})\rightarrow L^{p}_{rad}L^{p_1}_{ang}(\mathbb R^{n})}={{\omega}_{n}}^{1/p_2-1/p_1}\int_{0}^{1}t^{-n/p}\psi(t)dt.$$

(ii) $V_{\psi}(f):  L^{p}_{rad}L^{p_1}_{ang}(\mathbb R^{n})\rightarrow L^{p}_{rad}L^{p_2}_{ang}(\mathbb R^{n})$ exists as a bounded operator if and only if
$$\int_{0}^{1}t^{-n(1-1/p)}\psi(t)dt<\infty.$$

Moreover, the operator norm of $V_{\psi}$  is given by
$$\|V_{\psi}\|_{L^{p}_{rad}L^{p_2}_{ang}(\mathbb R^{n})\rightarrow L^{p}_{rad}L^{p_1}_{ang}(\mathbb R^{n})}={{\omega}_{n}}^{1/p_2-1/p_1}\int_{0}^{1}t^{-n(1-1/p)}\psi(t)dt.$$

Next, we will extended Theorems G and H to the mixed radial-angular homogeneous Herz spaces and Morrey-Herz spaces, respectively.
\begin{theorem}\label{th5.2}
Let $\psi:[0,1]\rightarrow[0,\infty)$ be a function and let $\alpha\in\mathbb R$, $1<p,q,\tilde{p}<\infty$, then we have

(i) $U_{\psi}(f): \dot{K}^{\alpha,q}_{L^{p}_{\rm rad}L^{\tilde{p}}_{\rm ang}}(\mathbb R^{n})\rightarrow \dot{K}^{\alpha,q}_{L^{p}_{\rm rad}L^{\tilde{p}}_{\rm ang}}(\mathbb R^{n})$ exists as a bounded operator if
$$\int_{0}^{1}t^{-\alpha-n/p}\psi(t)dt<\infty.$$

Moreover, the operator norm of $U_{\psi}$  satisfies
$$\|U_{\psi}\|_{\dot{K}^{\alpha,q}_{L^{p}_{\rm rad}L^{\tilde{p}}_{\rm ang}}(\mathbb R^{n})\rightarrow \dot{K}^{\alpha,q}_{L^{p}_{\rm rad}L^{\tilde{p}}_{\rm ang}}(\mathbb R^{n})}\simeq\int_{0}^{1}t^{-\alpha-n/p}\psi(t)dt.$$

(ii) $V_{\psi}(f): \dot{K}^{\alpha,q}_{L^{p}_{\rm rad}L^{\tilde{p}}_{\rm ang}}(\mathbb R^{n})\rightarrow \dot{K}^{\alpha,q}_{L^{p}_{\rm rad}L^{\tilde{p}}_{\rm ang}}(\mathbb R^{n})$  exists as a bounded operator if
$$\int_{0}^{1}t^{\alpha-n(1-1/p)}\psi(t)dt<\infty.$$

Moreover, the operator norm of $V_{\psi}$   satisfies
$$\|V_{\psi}\|_{\dot{K}^{\alpha,q}_{L^{p}_{\rm rad}L^{\tilde{p}}_{\rm ang}}(\mathbb R^{n})\rightarrow \dot{K}^{\alpha,q}_{L^{p}_{\rm rad}L^{\tilde{p}}_{\rm ang}}(\mathbb R^{n})}\simeq\int_{0}^{1}t^{\alpha-n(1-1/p)}\psi(t)dt.$$
\end{theorem}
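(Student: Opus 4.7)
The plan is to combine Minkowski's integral inequality with a dyadic scaling analysis for the Herz quasi-norm, and to match the resulting upper bound by testing against suitable power functions. Setting $f_t(x)=f(tx)$, Minkowski's inequality applied inside both the radial-angular norm and the $\ell^q$-sum in $k$ gives, for $U_\psi$,
$$\|U_\psi f\|_{\dot K^{\alpha,q}_{L^p_{\rm rad}L^{\tilde p}_{\rm ang}}}\leq \int_0^1\psi(t)\,\|f_t\|_{\dot K^{\alpha,q}_{L^p_{\rm rad}L^{\tilde p}_{\rm ang}}}\,dt,$$
so the whole problem reduces to the dilation estimate $\|f_t\|\lesssim t^{-\alpha-n/p}\|f\|$. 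The treatment of $V_\psi$ is symmetric after rewriting $V_\psi f=\int_0^1 t^{-n}f_{1/t}\psi(t)\,dt$; the additional $t^{-n}$ weight then combines with the scaling $\|f_{1/t}\|\lesssim t^{\alpha+n/p}\|f\|$ to produce the desired exponent $t^{\alpha-n(1-1/p)}$.

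To prove the dilation estimate for $t\in(0,1]$, I would substitute $s=tr$ in
$$\|f_t\chi_k\|^p_{L^p_{\rm rad}L^{\tilde p}_{\rm ang}}=\int_{2^{k-1}}^{2^k}\|f(tr\cdot)\|^p_{L^{\tilde p}(S^{n-1})}r^{n-1}\,dr,$$
obtaining $t^{-n}$ times the integral of $\|f(s\cdot)\|^p_{L^{\tilde p}(S^{n-1})}s^{n-1}$ over $[t2^{k-1},t2^k]$. Writing $t\in[2^{-j-1},2^{-j})$ for some integer $j\geq 0$, this interval is contained in $(2^{k-j-2},2^{k-j}]$ and therefore meets only $C_{k-j-1}$ and $C_{k-j}$, giving
$$\|f_t\chi_k\|_{L^p_{\rm rad}L^{\tilde p}_{\rm ang}}\leq t^{-n/p}\bigl(\|f\chi_{k-j-1}\|_{L^p_{\rm rad}L^{\tilde p}_{\rm ang}}+\|f\chi_{k-j}\|_{L^p_{\rm rad}L^{\tilde p}_{\rm ang}}\bigr).$$
Raising to the $q$-th power, summing against $2^{k\alpha q}$, reindexing $k\to k-j$ and $k\to k-j-1$, and using $2^{j\alpha q}\simeq t^{-\alpha q}$ (with constants depending on $\alpha,q$), one obtains $\|f_t\|_{\dot K^{\alpha,q}_{L^p_{\rm rad}L^{\tilde p}_{\rm ang}}}\lesssim t^{-\alpha-n/p}\|f\|_{\dot K^{\alpha,q}_{L^p_{\rm rad}L^{\tilde p}_{\rm ang}}}$. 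Inserting this into Minkowski's inequality completes the upper bound for $U_\psi$, and the same argument applied to $f_{1/t}$ handles $V_\psi$.

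For the matching lower bounds on the operator norms I will test on near-extremizer families: $f_\varepsilon(x)=|x|^{-\alpha-n/p-\varepsilon}\chi_{\{|x|>1\}}(x)$ for $U_\psi$ and $g_\varepsilon(x)=|x|^{-\alpha-n/p+\varepsilon}\chi_{\{|x|<1\}}(x)$ for $V_\psi$, with $\varepsilon>0$ small. Both are checked to lie in $\dot K^{\alpha,q}_{L^p_{\rm rad}L^{\tilde p}_{\rm ang}}$ by a direct geometric summation in $k$, the tail being controlled by a factor $2^{-\varepsilon|k|q}$. A direct computation gives
$$U_\psi f_\varepsilon(x)=|x|^{-\alpha-n/p-\varepsilon}\chi_{\{|x|>1\}}(x)\int_{1/|x|}^1 t^{-\alpha-n/p-\varepsilon}\psi(t)\,dt,$$
so as $|x|\to\infty$ the inner integral tends to $\int_0^1 t^{-\alpha-n/p-\varepsilon}\psi(t)\,dt$. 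Restricting the defining Herz sum to $k\geq k_0$ and then sending $k_0\to\infty$ followed by $\varepsilon\to 0^+$ (via monotone convergence) yields the reverse estimate $\|U_\psi\|\gtrsim\int_0^1 t^{-\alpha-n/p}\psi(t)\,dt$. The entirely analogous argument with $g_\varepsilon$ delivers the lower bound for $V_\psi$.

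The principal technical obstacle is the non-dyadic scaling: an arbitrary $t\in(0,1)$ does not map a single annulus $C_k$ onto another, so the clean radial scaling identity survives only up to the two-neighbor overlap described above. This loss is exactly what forces the conclusion to be an equivalence $\simeq$ rather than an equality, and it explains why the operator norm cannot be pinned down to a sharp constant as in the Lebesgue case of Theorem P. Secondary care is needed in the lower bound to verify that the truncations $\chi_{\{|x|>1\}}$, $\chi_{\{|x|<1\}}$ and the $|x|$-dependent lower limit $1/|x|$ contribute only negligibly after the iterated limits in $k_0$ and $\varepsilon$.
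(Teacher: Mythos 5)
Your upper-bound argument coincides with the paper's: the paper proves Theorem \ref{th5.3} in detail (declaring Theorem \ref{th5.2} ``similar''), and its proof is exactly your scheme --- Minkowski's integral inequality to pull out $\int_0^1\psi(t)\,dt$, the substitution $s=tr$, the observation that the dilated interval meets only two adjacent annuli, reindexing in $k$, and $2^{j\alpha}\simeq t^{-\alpha}$. Where you genuinely diverge is the lower bound, and your version is in fact the \emph{needed} one: the paper's test function for Theorem \ref{th5.3} is the pure power $|x|^{-\alpha-n/p+\lambda}$, which lies in the Morrey--Herz space only because $\lambda>0$; for the Herz space of Theorem \ref{th5.2} the corresponding power $|x|^{-\alpha-n/p}$ has $2^{k\alpha q}\|f\chi_k\|^q_{L^{p}_{\rm rad}L^{\tilde p}_{\rm ang}}\simeq 1$ for every $k$, so the sum over $k\in\mathbb Z$ diverges and the ``similar'' argument does not literally transfer. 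Your truncated, $\varepsilon$-perturbed families $f_\varepsilon$ and $g_\varepsilon$ are the correct repair, and the computation of $U_\psi f_\varepsilon$ with the $|x|$-dependent lower limit $1/|x|$ is right.

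One step as written would fail: the order of the two limits in the lower bound. Restricting the Herz sum to $k\ge k_0$ costs you the ratio $\bigl(\sum_{k\ge k_0}2^{-k\varepsilon q}\big/\sum_{k\ge 1}2^{-k\varepsilon q}\bigr)^{1/q}=2^{-(k_0-1)\varepsilon}$ against $\|f_\varepsilon\|_{\dot K^{\alpha,q}_{L^{p}_{\rm rad}L^{\tilde p}_{\rm ang}}}$, so the estimate you actually obtain is $\|U_\psi\|\ge 2^{-(k_0-1)\varepsilon}\int_{2^{-(k_0-1)}}^1 t^{-\alpha-n/p-\varepsilon}\psi(t)\,dt$. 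Sending $k_0\to\infty$ first at fixed $\varepsilon$ drives the prefactor to $0$ and the bound degenerates. You must instead fix $k_0$, let $\varepsilon\to0^+$ (the prefactor tends to $1$ and the integrand decreases monotonically to $t^{-\alpha-n/p}$ on the compact interval $[2^{-(k_0-1)},1]$), and only then let $k_0\to\infty$ --- or take a joint limit along $k_0=k_0(\varepsilon)$ with $k_0\to\infty$ and $k_0\varepsilon\to0$. With that reordering the argument closes, and the same correction applies verbatim to the $V_\psi$ case.
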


\begin{theorem}\label{th5.3}
Let $\psi:[0,1]\rightarrow[0,\infty)$ be a function and let $\alpha\in\mathbb R, \lambda>0$, $1<p,q,\tilde{p}<\infty$, then we have

(i) $U_{\psi}(f):{M\dot{K}}^{\alpha,q,\lambda}_{L^{p}_{\rm rad}L^{\tilde{p}}_{\rm ang}}(\mathbb R^{n})\rightarrow {M\dot{K}}^{\alpha,q,\lambda}_{L^{p}_{\rm rad}L^{\tilde{p}}_{\rm ang}}(\mathbb R^{n})$ exists as a bounded operator if
$$\int_{0}^{1}t^{-\alpha-n/p+\lambda}\psi(t)dt<\infty.$$

Moreover, the operator norm of $U_{\psi}$  satisfies
$$\|U_{\psi}\|_{{M\dot{K}}^{\alpha,q,\lambda}_{L^{p}_{\rm rad}L^{\tilde{p}}_{\rm ang}}(\mathbb R^{n})\rightarrow {M\dot{K}}^{\alpha,q,\lambda}_{L^{p}_{\rm rad}L^{\tilde{p}}_{\rm ang}}(\mathbb R^{n})}\simeq\int_{0}^{1}t^{-\alpha-n/p+\lambda}\psi(t)dt.$$

(ii) $V_{\psi}(f): {M\dot{K}}^{\alpha,q,\lambda}_{L^{p}_{\rm rad}L^{\tilde{p}}_{\rm ang}}(\mathbb R^{n})\rightarrow {M\dot{K}}^{\alpha,q,\lambda}_{L^{p}_{\rm rad}L^{\tilde{p}}_{\rm ang}}(\mathbb R^{n})$  exists as a bounded operator if
$$\int_{0}^{1}t^{\alpha-\lambda-n(1-1/p)}\psi(t)dt<\infty.$$

Moreover, the operator norm of $V_{\psi}$   satisfies
$$\|V_{\psi}\|_{{M\dot{K}}^{\alpha,q,\lambda}_{L^{p}_{\rm rad}L^{\tilde{p}}_{\rm ang}}(\mathbb R^{n})\rightarrow {M\dot{K}}^{\alpha,q,\lambda}_{L^{p}_{\rm rad}L^{\tilde{p}}_{\rm ang}}(\mathbb R^{n})}\simeq\int_{0}^{1}t^{\alpha-\lambda-n(1-1/p)}\psi(t)dt.$$
\end{theorem}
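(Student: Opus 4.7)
The plan is to adapt the strategy from the proof of Theorem H to the mixed radial-angular Morrey--Herz setting; the angular integration only contributes a scalar factor $\omega_n^{1/\tilde p}$ that is absorbed consistently on both sides of the estimate. I will describe the argument for part (i) on $U_{\psi}$ in detail, since part (ii) on $V_{\psi}$ is the mirror image after the dual change of variable $s=r/t$.

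First I would apply Minkowski's inequality to the definition $U_{\psi}(f)(x)=\int_{0}^{1}f(tx)\psi(t)\,dt$ to pull the $t$-integral outside the mixed radial-angular norm. The key identity, obtained by the radial change of variable $s=tr$, is
$$\|f(t\,\cdot)\chi_{k}\|_{L^{p}_{\rm rad}L^{\tilde p}_{\rm ang}(\mathbb R^{n})}=t^{-n/p}\,\|f\chi_{[t2^{k-1},t2^{k}]}\|_{L^{p}_{\rm rad}L^{\tilde p}_{\rm ang}(\mathbb R^{n})},$$
where the cut-off is now over the shifted annulus $\{t2^{k-1}\le|x|\le t2^{k}\}$. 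To transfer this back to the dyadic annuli $C_{j}$ defining the Morrey--Herz norm, I would decompose $(0,1]=\bigcup_{\ell\ge 0}(2^{-\ell-1},2^{-\ell}]$; on the $\ell$-th subinterval the shifted annulus is contained in $C_{k-\ell-1}\cup C_{k-\ell}$, hence
$$\|f\chi_{[t2^{k-1},t2^{k}]}\|_{L^{p}_{\rm rad}L^{\tilde p}_{\rm ang}(\mathbb R^{n})}\lesssim\|f\chi_{k-\ell-1}\|_{L^{p}_{\rm rad}L^{\tilde p}_{\rm ang}(\mathbb R^{n})}+\|f\chi_{k-\ell}\|_{L^{p}_{\rm rad}L^{\tilde p}_{\rm ang}(\mathbb R^{n})}.$$

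Forming the Morrey--Herz norm and applying the discrete Minkowski inequality in the $\ell^{q}$ direction (legitimate since $q>1$), the joint index shift $k\mapsto k-\ell$, $k_{0}\mapsto k_{0}-\ell$ yields
$$\sup_{k_{0}\in\mathbb Z}2^{-k_{0}\lambda}\Big(\sum_{k\le k_{0}}2^{k\alpha q}\|f\chi_{k-\ell}\|^{q}_{L^{p}_{\rm rad}L^{\tilde p}_{\rm ang}(\mathbb R^{n})}\Big)^{1/q}=2^{\ell(\alpha-\lambda)}\|f\|_{M\dot K^{\alpha,q,\lambda}_{L^{p}_{\rm rad}L^{\tilde p}_{\rm ang}}(\mathbb R^{n})},$$
and since $t\simeq 2^{-\ell}$ on the $\ell$-th subinterval, the prefactor is $\simeq t^{\lambda-\alpha}$. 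Combining with the $t^{-n/p}$ from the change of variable turns the integrand into $t^{-\alpha-n/p+\lambda}\psi(t)$, yielding the claimed upper bound. For the matching lower bound I would test with the homogeneous function $f_{0}(x)=|x|^{-\alpha-n/p+\lambda}$: a direct annular computation shows $\|f_{0}\|_{M\dot K^{\alpha,q,\lambda}_{L^{p}_{\rm rad}L^{\tilde p}_{\rm ang}}(\mathbb R^{n})}$ is finite (using $\lambda>0$ for convergence of the relevant geometric series), and by homogeneity $U_{\psi}(f_{0})(x)=f_{0}(x)\int_{0}^{1}t^{-\alpha-n/p+\lambda}\psi(t)\,dt$, so the ratio recovers the integral exactly.

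For part (ii), the parallel substitution $s=r/t$ turns $t^{-n/p}$ into $t^{n/p}$, which combined with the weight $t^{-n}$ already present in $V_{\psi}$ produces $t^{-n(1-1/p)}$; the corresponding shift becomes $k\mapsto k+\ell$, yielding $2^{\ell(\lambda-\alpha)}\simeq t^{\alpha-\lambda}$, so the total integrand is $t^{\alpha-\lambda-n(1-1/p)}\psi(t)$, and the test function is $f_{0}(x)=|x|^{\alpha-\lambda-n(1-1/p)}$. The main technical obstacle is precisely the interaction between the continuous $t$-integration defining $U_{\psi},V_{\psi}$ and the discrete dyadic structure of the Morrey--Herz norm; the dyadic decomposition of $(0,1]$ together with the two-term annular bound above is what aligns the scaling exponents so that the final integrand matches the exponent $-\alpha-n/p+\lambda$ in (i) and $\alpha-\lambda-n(1-1/p)$ in (ii). Once this bookkeeping of scales is in place, the remaining steps are routine and give both directions of the $\simeq$ appearing in the statement.
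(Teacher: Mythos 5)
Your argument for part (i) is correct and is essentially the paper's own proof: the same Minkowski-plus-rescaling step producing $t^{-n/p}$ and the shifted annulus, the same covering by two dyadic annuli (your index $\ell$ is the paper's $m$ with $m=-\ell$), the same index shift in the Morrey--Herz norm producing the factor $t^{\lambda-\alpha}$, and the same homogeneous test function $|x|^{-\alpha-n/p+\lambda}$. Your remark that $\lambda>0$ is what makes the geometric series for $\|f_0\|_{M\dot{K}^{\alpha,q,\lambda}_{L^{p}_{\rm rad}L^{\tilde p}_{\rm ang}}}$ converge is in fact more careful than the paper's own computation at that point, which asserts $\|f_0\chi_k\|^p\simeq 1$ even though this quantity actually grows like $2^{k(\lambda-\alpha)p}$; the norm is nevertheless finite for exactly the reason you give.

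The one genuine error is the test function you propose for the lower bound in part (ii) (a part the paper does not write out). If $f_0(x)=|x|^{\gamma}$, then $V_{\psi}(f_0)(x)=f_0(x)\int_{0}^{1}t^{-\gamma-n}\psi(t)\,dt$, so to reproduce the integral $\int_{0}^{1}t^{\alpha-\lambda-n(1-1/p)}\psi(t)\,dt$ you need $-\gamma-n=\alpha-\lambda-n+n/p$, i.e.\ $\gamma=\lambda-\alpha-n/p$ (the same exponent as in part (i)), not $\gamma=\alpha-\lambda-n(1-1/p)$. With your choice the homogeneity computation returns $\int_{0}^{1}t^{\lambda-\alpha-n/p}\psi(t)\,dt$, which is the integral from part (i) rather than part (ii); moreover $\|\,|x|^{\alpha-\lambda-n(1-1/p)}\chi_k\|_{L^{p}_{\rm rad}L^{\tilde p}_{\rm ang}(\mathbb R^{n})}\simeq 2^{k(\alpha-\lambda-n+2n/p)}$, so the resulting Morrey--Herz norm is generically infinite and the function need not even lie in the space. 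Replacing the test function by $|x|^{\lambda-\alpha-n/p}$ repairs the lower bound; the upper-bound bookkeeping you describe for $V_{\psi}$ (the factor $t^{n/p}$ from the substitution combining with the built-in $t^{-n}$ to give $t^{-n(1-1/p)}$, and the shift $k\mapsto k+\ell$ giving $t^{\alpha-\lambda}$) is correct as stated.
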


\begin{proof}  The proofs of Theorems \ref{th5.2} and \ref{th5.3} are similar, thus we only need to prove Theorem \ref{th5.3}.

(i) Suppose $\int_{0}^{1}t^{-\alpha-n/p+\lambda}\psi(t)dt<\infty$, then
\begin{align*}
&\|U_{\psi}(f)\chi_k\|_{L^p_{\rm rad}L^{\tilde{p}}_{\rm ang}(\mathbb R^{n})}\\
&=\Big(\int_{2^{k-1}}^{2^k}\Big(\int_{{S}^{n-1}}|U_{\psi}(f)(r\theta)|^{\tilde{p}}d\sigma(\theta)\Big)^{{p}/\tilde{p}}r^{n-1}dr\Big)^{1/p}\\
&=\Big(\int_{2^{k-1}}^{2^k}\Big(\int_{{S}^{n-1}}|\int_{0}^{1}f(tr\theta)\psi(t)dt|^{\tilde{p}}d\sigma(\theta)\Big)^{{p}/\tilde{p}}r^{n-1}dr\Big)^{1/p}\\
&\leq\Big(\int_{2^{k-1}}^{2^k}\Big(\int_{0}^{1}\Big(\int_{{S}^{n-1}}|f(tr\theta)|^{\tilde{p}}d\sigma(\theta)\Big)^{1/\tilde{p}}\psi(t)dt\Big)^{{p}}r^{n-1}dr\Big)^{1/p}\\
&\leq\int_{0}^{1}\Big(\int_{2^{k-1}}^{2^k}\Big(\int_{{S}^{n-1}}|f(tr\theta)|^{\tilde{p}}d\sigma(\theta)\Big)^{p/\tilde{p}}r^{n-1}dr\Big)^{1/p}\psi(t)dt\\
&=\int_{0}^{1}\Big(\int_{2^{k-1}t}^{2^kt}\Big(\int_{{S}^{n-1}}|f(r\theta)|^{\tilde{p}}d\sigma(\theta)\Big)^{p/\tilde{p}}r^{n-1}dr\Big)^{1/p}t^{-n/p}\psi(t)dt.
\end{align*}
Then for any $0<t<1$, there is $m\in\mathbb{Z}$, such that $2^{m-1}<t\leq2^{m}$, by the Minkowski inequality, we have
\begin{align*}
&\|U_{\psi}(f)\chi_k\|_{L^p_{\rm rad}L^{\tilde{p}}_{\rm ang}(\mathbb R^{n})}\\
&\leq\int_{0}^{1}\Big(\int_{2^{k-m-2}t}^{2^{k+m}t}\Big(\int_{{S}^{n-1}}|f(r\theta)|^{\tilde{p}}d\sigma(\theta)\Big)^{p/\tilde{p}}r^{n-1}dr\Big)^{1/p}t^{-n/p}\psi(t)dt\\
&\leq\int_{0}^{1}\big(\|f\chi_{k+m-1}\|_{L^{p}_{rad}L^{\tilde{p}}_{ang}(\mathbb R^{n})}+\|f\chi_{k+m}\|_{L^{p}_{rad}L^{\tilde{p}}_{ang}(\mathbb R^{n})\big)}t^{-n/p}\psi(t)dt.
\end{align*}
Therefore, we have
\begin{align*}
&\|U_{\psi}(f)\|_{{M\dot{K}}^{\alpha,q,\lambda}_{L^{p}_{\rm rad}L^{\tilde{p}}_{\rm ang}}(\mathbb R^{n})}\\
&=\sup\limits_{k_0\in\mathbb{Z}}2^{-k_0\lambda}\Big\{\sum\limits_{k=-\infty}^{k_0}2^{k\alpha q} \|U_{\psi}(f)\chi_k\|^q_{L^{p}_{\rm rad}L^{\tilde{p}}_{\rm ang}(\mathbb R^{n})}\Big\}^{1/q}\\
&\leq\sup\limits_{k_0\in\mathbb{Z}}2^{-k_0\lambda}\Big\{\sum\limits_{k=-\infty}^{k_0}2^{k\alpha q} \Big(\int_{0}^{1}\big(\|f\chi_{k+m-1}\|_{L^{p}_{rad}L^{\tilde{p}}_{ang}(\mathbb R^{n})}\\
&\quad+\|f\chi_{k+m}\|_{L^{p}_{rad}L^{\tilde{p}}_{ang}(\mathbb R^{n})\big)}t^{-n/p}\psi(t)dt\Big)\Big\}^{1/q}\\
&\leq\sup\limits_{k_0\in\mathbb{Z}}2^{-k_0\lambda}\int_{0}^{1}\Big(\sum\limits_{k=-\infty}^{k_0}2^{k\alpha q}\|f\chi_{k+m-1}\|^q_{L^{p}_{rad}L^{\tilde{p}}_{ang}(\mathbb R^{n})}\Big)^{1/q}t^{-n/p}\psi(t)dt\\
&\quad+\sup\limits_{k_0\in\mathbb{Z}}2^{-k_0\lambda}\int_{0}^{1}\Big(\sum\limits_{k=-\infty}^{k_0}2^{k\alpha q}\|f\chi_{k+m}\|^q_{L^{p}_{rad}L^{\tilde{p}}_{ang}(\mathbb R^{n})}\Big)^{1/q}t^{-n/p}\psi(t)dt\\
&\lesssim\|f\|_{{M\dot{K}}^{\alpha,q,\lambda}_{L^{p}_{\rm rad}L^{\tilde{p}}_{\rm ang}}(\mathbb R^{n})}\int_{0}^{1}t^{-\alpha-n/p+\lambda}\psi(t)dt.
\end{align*}

On the other hand, we consider the following function
$$f(x)=|x|^{-\alpha-n/p+\lambda}.$$
Then,
$$U_{\psi}(f)(x)=f(x)\int_{0}^{1}t^{-\alpha-n/p+\lambda}\psi(t).$$
If $\alpha\neq\lambda$, then
\begin{align*}
&\|f\chi_k\|^p_{L^p_{\rm rad}L^{\tilde{p}}_{\rm ang}(\mathbb R^{n})}\\
&=\omega_n^{p/\tilde{p}}\int_{2^{k-1}}^{2^k}r^{(-\alpha-n/p+\lambda)p}r^{n-1}dr\\
&\simeq1.
\end{align*}
If $\alpha=\lambda$, then
$$\|f\chi_k\|^p_{L^p_{\rm rad}L^{\tilde{p}}_{\rm ang}(\mathbb R^{n})}=\omega_n^{p/\tilde{p}}\ln2\simeq1.$$
Therefore, we derive that
$$\|U_{\psi}(f)\|_{{M\dot{K}}^{\alpha,q,\lambda}_{L^{p}_{\rm rad}L^{\tilde{p}}_{\rm ang}}(\mathbb R^{n})}\simeq\|f\|_{{M\dot{K}}^{\alpha,q,\lambda}_{L^{p}_{\rm rad}L^{\tilde{p}}_{\rm ang}}(\mathbb R^{n})}\int_{0}^{1}t^{-\alpha-n/p+\lambda}\psi(t)dt.$$
The proof of Theorem \ref{th5.3} is completed.
\end{proof}


\begin{thebibliography}{99}

\bibitem{AGL} J. Alvarez, M. Guzm\'{a}n-Partida and J. Lakey,
Spaces of bounded $\lambda$-central mean oscillation, Morrey spaces, and  $\lambda$-central Carleson measures,
Collect. Math. {\bf 51} (2000), 1-47.


\bibitem{CL} F. Cacciafesta and R. Luc\`{a},
Singular integrals with angular regularity,
Proc. Amer. Math. Soc. {\bf 144} (2016), 3413-3418.

\bibitem{CG} M. Christ and L. Grafakos,
Best constants for two nonconvolution inequalities,
Proc. Am. Math. Soc. {\bf 123} (1995), 1687-1693.


\bibitem{DL2}P. D'Ancona and R. Luc\`{a},
On the regularity set and angular integrability for the Navier-Stokes equation,
Arch. Rational Mech. Anal. {\bf 221} (2016), 1255-1284.

\bibitem{DR} J. Duoandikoetxea and O. Oruetxebarria,
Weighted mixed-norm inequalities through extrapolation,
 Math. Nachr. {\bf 292}(7) (2019), 1482-1489.


\bibitem{FL}Z. Fu and S. Lu,
Weighted Hardy-Littlewood aversges on Herz-type spaces,
 Adv. Math. (China). {\bf 53}(5) (2006), 632-636.

\bibitem{FL1}Z. Fu and S. Lu,
$\lambda$-central $\mathrm{BMO}$  estimates for commutators of higher-dimensional fractional Hardy operators,
 Acta Math. Sinica (Chinese Ser.). {\bf 53}(5) (2010), 925-932.

\bibitem{FLLZ}Z. Fu, G. Loukas, S. Lu and F. Zhao,
Sharp bounds for $m$-linear Hardy and Hilbert operators,
Houston J. Math.  {\bf 38}(1) (2012), 225-244.



\bibitem{GHC} G. Gao, X. Hu and C. Zhang,
Sharp weak estimates for Hardy-type operators,
Ann. Funct. Anal. {\bf 7}(3) (2016), 421-433.

\bibitem{GH} G. H. Hardy,
Note on a theorem of Hilbert,
Math. Z. {\bf 6} (1920), 314-317.

\bibitem{LLW} R. Liu, F Liu and H. Wu,
Mixed radial-angular integrability for rough singular integrals and maximal operators,
Proc. Amer. Math. Soc. {\bf 148}(9) (2020), 3943-3956.

\bibitem{LLW2}{R. Liu, F. Liu and H. Wu},
On the mixed radial-angular integrability of Marcinkiewicz integrals with rough kernels.
Acta Math. Sci. Ser. B (Engl. Ed.). {\bf 41}(1) 2021, 241-256.

\bibitem{LW} R. Liu and H. Wu,
Rough singular integrals and maximal operator withradial-angular integrability,
Proc. Amer. Math. Soc. {\bf 150}(3) (2022), 1141-1151.

\bibitem{LRW} R. Liu and H. Wu,
Mixed radial-angular integrability for rough maximal singular integrals and Marcinkiewicz integrals with mixed homogeneity,
Math. Nachr. {\bf 296} (7) 2023, 2942-2957.

\bibitem{LTW} R. Liu, S. Tao and H. Wu,
Characterizations of the mixed radial-angular central Campanato space via the commutators of Hardy type,
Forum Math. {\bf 35}(5) 2023, 1327-1346.

\bibitem{LT} R. Liu and S. Tao
Mixed radial-angular integrabilities for Hardy type operators,
Bull. Korean Math. Soc. {\bf 60} (5) 2023, 1409-1425.

\bibitem{LTW1} R. Liu, S. Tao and H. Wu,
Mixed radial-angular integrabilities for commutators of fractional Hardy operators,
Bull. Sci. Math.,(2024), in press.

\bibitem{LTW2} R. Liu, S. Tao and H. Wu,
Mixed radial-angular integrabilities for commutators of fractional Hardy operators with rough kernels,
(preprint).

\bibitem{LD} S. Lu, D. Yang,
The continuity of commutators on Herz-type spaces,
Michigan Math. J. {\bf 44} (1997), 72-94.

\bibitem{LX} S. Lu, L. Xu,
Boundedness of rough singular integral operators on the homogeneous Morrey-Herz spaces,
Hokkaido Math. J. {\bf 34} (2005), 299-314.

\bibitem{LYZ} S. Lu, D. Yan and F. Zhao,
Sharp bounds for Hardy type operators on higher-dimensional product spaces,
J. Inequal. Appl. {\bf 2013}, Paper No.148, 1-11.

\bibitem{Ste} {E. M. Stein},
Harmonic Analysis: Real-variable methods, orthogonality and oscillatory integral (Princeton University
Press, Princeton, 1993), pp. 476-478.

\bibitem{St} J. Sterbenz,
Angular regularity and Strichartz estimates for the wave equation,
Int. Math. Res. Not. {\bf 4} (2005), 187-231.

\bibitem{Tao} T. Tao,
Spherically averaged endpoint Strichartz estimates for the two-dimensional Schr\"{o}dinger equation,
Comm. Partial Differential Equations. {\bf 25}(7-8) (2000), 1471-1485.

\bibitem{Xiao} J. Xiao,
$L^p$ and $\mathrm{BMO}$ bounds of weighted Hardy-Littlewood averages,
J. Math. Anal. Appl.  {\bf 262} (2001), 660-666.

\bibitem{YL} H. Yu and J. Li,
Sharp weak bounds for $n$-dimensional fractional Hardy operators,
Front. Math. China. {\bf 2} (13) (2018), 449-457.

\bibitem{ZL} F. Zhao and S. Lu,
The best bound for $n$-dimensional fractional Hardy operators,
Math. Inequal. Appl. {\bf 18} (1) (2015), 233-240.
\end{thebibliography}
\end{document}